\colorlet{refkey}{orange!20}
\colorlet{labelkey}{blue!30}
\newtheorem{theorem}{Theorem}[section]
\newtheorem*{theorem*}{Theorem}
\newtheorem{lemma}[theorem]{Lemma}
\newtheorem{corollary}[theorem]{Corollary}
\newtheorem*{question*}{Question}
\theoremstyle{definition}
\newtheorem{definition}[theorem]{Definition}
\newtheorem*{definition*}{Definition}
\theoremstyle{remark}
\newtheorem*{remark}{Remark}
\newcommand{\ceil}[1]{\left\lceil #1 \right\rceil}
\newcommand{\EE}{\mathbb{E}}
\newcommand{\RR}{\mathbb{R}}
\newcommand{\PP}{\mathbb{P}}
\newcommand{\NN}{\mathbb{N}}
\newcommand{\cP}{\mathcal{P}}
\newcommand{\cQ}{\mathcal{Q}}
\newcommand{\dd}{\mathsf{d}}
\DeclareMathOperator{\irreg}{irreg}
\DeclareMathOperator{\tower}{tower}
\author{Jacob Fox}
\address{Department of Mathematics, Stanford University, Stanford, CA 94305.}
\email{jacobfox@stanford.edu}
\thanks{J.~Fox is supported by a Packard Fellowship, by NSF CAREER award DMS 1352121,
and by an Alfred P. Sloan Fellowship}
\author{L\'aszl\'o Mikl\'os Lov\'asz}
\address{Department of Mathematics\\ MIT\\ Cambridge, MA 02139.}
\email{lmlovasz@math.mit.edu}
\author{Yufei Zhao}
\address{Mathematical Institute, University of Oxford, Oxford OX2 6GG, United Kingdom}
\email{yufei.zhao@maths.ox.ac.uk}
\thanks{Y.~Zhao was supported by a Microsoft Research PhD Fellowship.}
\title{On regularity lemmas and their algorithmic applications}
\date{\today}
\begin{document}

\subjclass[2010]{05C85, 05C50, 05D99}

\begin{abstract}
Szemer\'edi's regularity lemma and its variants are some of the most powerful tools in combinatorics. In this paper, we establish several results around the regularity lemma. First, we prove that whether or not we include the condition that the desired vertex partition in the regularity lemma is equitable has a minimal effect on the number of parts of the partition. Second, we use an algorithmic version of the (weak) Frieze--Kannan regularity lemma to give a substantially faster deterministic approximation algorithm for counting subgraphs in a graph. Previously, only an exponential dependence for the running time on the error parameter was known, and we improve it to a polynomial dependence. Third, we revisit the problem of finding an algorithmic regularity lemma, giving approximation algorithms for several co-NP-complete problems. We show how to use the weak Frieze--Kannan regularity lemma to approximate the regularity of a pair of vertex subsets. We also show how to quickly find, for each $\epsilon'>\epsilon$, an $\epsilon'$-regular partition with $k$ parts if there exists an  $\epsilon$-regular partition with $k$ parts. Finally, we give a simple proof of the permutation regularity lemma which improves the tower-type bound on the number of parts in the previous proofs to a single exponential bound. 

(Updated Jan 2018: Erratum added at the end. See also \cite{FLZ17n})
\end{abstract}

\maketitle

\section{Introduction}

Szemer\'edi's regularity lemma \cite{Sz76} is one of the most powerful tools in graph theory. Szemer\'edi \cite{Sz75} used an early version in the proof of his celebrated theorem on long arithmetic progressions in dense subsets of the integers. Roughly speaking, the regularity lemma says that every large graph can be partitioned into a small number of parts such that the bipartite subgraph between almost every pair of parts is random-like. 

To state Szemer\'edi's regularity lemma requires some terminology. Let $G$ be a graph, and $X$ and $Y$ be (not necessarily disjoint) vertex subsets. Let $e(X,Y)$ denote the number of pairs vertices $(x,y) \in X \times Y$ that are edges of $G$. The {\it edge density} $d(X,Y)= e(X,Y) / (|X||Y|)$ between $X$ and $Y$ is the fraction of pairs in $X \times Y$ that are edges. The pair $(X,Y)$ is {\it $\epsilon$-regular} if for all $X' \subseteq X$ and $Y' \subseteq Y$ with $|X'| \geq \epsilon|X|$ and $|Y'| \geq \epsilon |Y|$, we have $|d(X',Y')-d(X,Y)|<\epsilon$. Qualitatively, a pair of parts is $\epsilon$-regular with small $\epsilon$ if the edge densities between pairs of large subsets are all roughly the same. A vertex partition $V=V_1 \cup \ldots \cup V_k$ is {\it equitable} if the parts have size as equal as possible, that is we have $||V_i|-|V_j|| \leq 1$ for all $i,j$. An equitable vertex partition with $k$ parts is {\it $\epsilon$-regular} if all but $\epsilon k^2$ pairs of parts $(V_i,V_j)$ are $\epsilon$-regular. The regularity lemma states that for every $\epsilon>0$ there is a (least) integer $K(\epsilon)$ such that every graph has an $\epsilon$-regular equitable vertex partition into at most $K(\epsilon)$ parts. 

Arguably the main drawback of Szemer\'edi's regularity lemma is that the proof gives an enormous upper bound $K(\epsilon)$ on the number of parts, namely an exponential tower of twos of height $O(\epsilon^{-5})$. That such a huge bound is indeed necessary was an open problem for many years until Gowers \cite{Gow97} proved a lower bound on $K(\epsilon)$ which is an exponential tower of twos of height $\Omega(\epsilon^{-1/16})$. Further results by Conlon and Fox \cite{CF12} determine the dependence on the number of irregular pairs and a simpler proof of Gowers' result was obtained by Moshkovitz and Shapira \cite{MS15}. The first two authors \cite{FL} determine the tower height up to a constant factor in a version of the regularity lemma (see Section \ref{sec:equitable} for details). In this version, we show in Section \ref{sec:equitable} that the requirement that the partition is equitable has a negligible effect on the number of parts (up to changing the regularity parameter a little bit).

Due to the many applications of the regularity lemma, there has been a great deal of research on developing algorithmic versions of the regularity lemma and its applications (see the survey by Koml\'os and Simonovits \cite{KoSi}). We would like 
to be able to find an $\epsilon$-regular partition of a graph on $n$ vertices in time polynomial in $n$. Szemer\'edi's original proof of the regularity lemma was not algorithmic. The reason for this is that it needs to be able to check if a pair of parts is $\epsilon$-regular, and if not, to use subsets of the parts that realize this. This is problematic because it is shown in \cite{ADLRY} that determining whether a given pair of parts is $\epsilon$-regular is co-NP-complete. They use this to show that checking whether a given partition is $\epsilon$-regular is co-NP-complete.  

However, Alon, Duke, Lefmann, R\"odl, and Yuster \cite{ADLRY} show how to find, if a given pair of vertex subsets of size $n$ are not $\epsilon$-regular, a pair of subsets which realize that the pair is not $\epsilon^4/16$-regular. The running time is $O_\epsilon(n^{\omega+o(1)})$, where $\omega < 2.373$ is the matrix multiplication exponent (multiplying two $n \times n$ matrices in $n^{\omega+o(1)}$ time) \cite{CW90,LeGall14}. Here we use the subscript $\epsilon$ to mean that the hidden constants depend on $\epsilon$. Finding a pair of subsets of vertices that detect irregularity is the key bottleneck for the algorithmic proof of the regularity lemma. It was shown \cite{ADLRY} that one can find an $\epsilon$-regular partition with the number of parts at most an exponential tower of height $O(\epsilon^{-20})$ in an $n$-vertex graph in time $O_\epsilon(n^{\omega+o(1)})$. Thus, the following surprising fact holds: while checking whether a given partition is $\epsilon$-regular is co-NP-complete, finding an $\epsilon$-regular partition can be done in polynomial time. 

Frieze and Kannan \cite{FK99} later found a simple algorithmic proof of the regularity lemma based on a spectral approach. Using expander graphs, Kohayakawa, R\"odl, and Thoma \cite{KRT} gave a faster algorithmic regularity lemma with optimal running time of $O_\epsilon(n^2)$. Alon and Naor \cite{AN06} develop an algorithm which approximates the cut norm of a graph within a factor $0.56$ using Grothendieck's inequality and apply this to find a polynomial time algorithm which finds, for a given pair of vertex subsets of order $n$ which is not $\epsilon$-regular, a pair of subsets which realize that the pair is not $\epsilon^3/2$-regular. They further observe that their approach gives an improvement on the tower height in the algorithmic regularity lemma to $O(\epsilon^{-7})$. 

However, due to the tower-type dependence for the number of parts on the regularity parameter, these are not practical algorithms. While most graphs have a small regularity partition, the previous algorithmic proofs would not necessarily find it and would only guarantee to find a regular partition with a tower-type number of parts. Addressing this issue, Fischer, Matsliah, and Shapira \cite{EMS10} give a probabilistic algorithm which runs in constant time (depending on $\epsilon$ and $k$) which with high probability finds, in a graph which has an $\epsilon/2$-regular partition with $k$ parts, an $\epsilon$-regular partition with at most $k$ parts (implicitely defined). Tao \cite{Taoeps2} gives a probabilistic algorithm which with high probability in constant time (depending on $\epsilon$) produces an $\epsilon$-regular partition. The algorithm takes a random sample of vertices (the exact number of which is also random) and outputs the common refinement of the neighborhoods of these vertices. 

Still, it is desirable to have a fast {\it deterministic} algorithm for finding a regularity partition, which we obtain here. We give several deterministic approximation algorithms for these co-NP-complete problems.

\begin{theorem} \label{thm:partition}
There exists an $O_{\epsilon,\alpha, k}(n^2)$ time algorithm, which, given $0<\epsilon,\alpha<1$ and $k$, and a graph $G$ on $n$ vertices that admits an equitable $\epsilon$-regular partition with $k$ parts, outputs an equitable $(1+\alpha)\epsilon$-regular partition of $G$ into $k$ parts.
\end{theorem}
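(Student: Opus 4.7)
The plan is a coordinatize--discretize--enumerate--verify strategy. Given the promise, we use a coarse reference partition to fingerprint vertices by their local edge densities, group vertices with similar fingerprints, enumerate a constant-size family of candidate equitable $k$-partitions compatible with these fingerprints, and verify each candidate with an approximate regularity checker. First, apply the algorithmic weak (Frieze--Kannan) regularity lemma to $G$ with a sufficiently small parameter to obtain a partition $\mathcal{W}=(W_1,\dots,W_m)$ with $m=m(\epsilon,\alpha,k)$ parts in $O_{\epsilon,\alpha,k}(n^2)$ time; then, for each vertex $v$, compute the profile $\phi(v)=(d(v,W_1),\dots,d(v,W_m))$ and round each coordinate to the nearest multiple of $\delta=\delta(\epsilon,\alpha,k)$, sorting $V$ into $r\le(1/\delta)^m$ type classes $T_1,\dots,T_r$.

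Next, enumerate all non-negative integer $(r\times k)$-tables $(n_{i,j})$ with $\sum_j n_{i,j}=|T_i|$ and $|\sum_i n_{i,j}-n/k|\le 1$, with each $n_{i,j}/n$ quantized on an $\eta$-grid for an appropriate $\eta=\eta(\epsilon,\alpha,k)$; for each such table build a candidate equitable $k$-partition by placing $n_{i,j}$ vertices of type $T_i$ into part $j$ arbitrarily. The number of candidates is bounded by a function of $\epsilon,\alpha,k$ alone. For each candidate, invoke the approximate regularity algorithm of Alon--Duke--Lefmann--R\"odl--Yuster (or Alon--Naor) on each of the $\binom{k}{2}$ pairs in $O_{\epsilon,\alpha,k}(n^2)$ time; each invocation either certifies the pair is $(1+\alpha)\epsilon$-regular or produces a witness of worse-than-$(1+\alpha/2)\epsilon$ irregularity. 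Output any candidate for which all but at most $(1+\alpha)\epsilon k^2$ pairs are certified.

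Correctness reduces to showing that if the promised equitable $\epsilon$-regular partition $\mathcal{P}^*=(V_1^*,\dots,V_k^*)$ exists, then some enumerated candidate is in fact $(1+\alpha/2)\epsilon$-regular. By $\epsilon$-regularity of $\mathcal{P}^*$, for most $v\in V_i^*$ and every $l$,
\[
d(v,W_l)\;\approx\;\sum_{j} \frac{|W_l\cap V_j^*|}{|W_l|}\,d(V_i^*,V_j^*)\;=:\;c_{i,l},
\]
so for $\delta$ small enough, most vertices of each $V_i^*$ share a common discretized type $\tau_i$. Assigning type $\tau_i$ to part $i$ (splitting the class proportionally if several $\tau_i$ coincide, and distributing the remaining atypical vertices arbitrarily to restore equitability) yields an equitable $k$-partition agreeing with $\mathcal{P}^*$ on all but an $o(1)$ fraction of vertices, whose size table matches one of the enumerated candidates after $\eta$-rounding.

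The main obstacle is handling \emph{type collisions}, when two distinct target parts $V_i^*,V_j^*$ share the same typical profile: fingerprints then cannot separate their vertices, and one must argue that an arbitrary equitable splitting of the collision class still produces an $(1+\alpha/2)\epsilon$-regular partition. This relies on observing that $\tau_i=\tau_j$ forces $d(V_i^*,V_l^*)\approx d(V_j^*,V_l^*)$ for every other $l$, so the splitting is indistinguishable from $\mathcal{P}^*$ at the level of pair densities. The other delicate point, routine once identified, is tuning $m,\delta,\eta$ and the weak regularity parameter against one another so that the accumulated perturbation error when moving from $\mathcal{P}^*$ to the candidate fits inside the $\alpha\epsilon/2$ slack.
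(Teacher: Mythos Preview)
Your overall architecture (weak regularity, then enumerate candidate $k$-partitions indexed by a constant-size table, then verify) is the right one and matches the paper. But the specific way you coordinatize vertices is where the argument breaks.

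You fingerprint a vertex by its rounded degree profile $\phi(v)=(d(v,W_1),\dots,d(v,W_m))$ and claim that, by $\epsilon$-regularity of $\mathcal{P}^*$, most $v\in V_i^*$ have $d(v,W_l)\approx c_{i,l}$. Regularity of $(V_i^*,V_j^*)$ only controls $d(v,S)$ for $S\subseteq V_j^*$ with $|S|\ge\epsilon|V_j^*|$; since the weak partition has $m\gg k/\epsilon$ parts, each $W_l\cap V_j^*$ is typically far smaller than $\epsilon|V_j^*|$, and there is nothing preventing the degrees $d(v,W_l)$ from being wildly spread out over $v\in V_i^*$ (e.g.\ half of $V_i^*$ complete to $W_l$ and half empty to it). So you cannot conclude that most of $V_i^*$ lands in one type. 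The type-collision paragraph has the same defect one level up: even granting that $\tau_i=\tau_j$ means $d(V_i^*,W_l)\approx d(V_j^*,W_l)$ for all $l$, this does not force $d(V_i^*,V_p^*)\approx d(V_j^*,V_p^*)$, and in any case matching pair densities is far weaker than preserving $\epsilon$-regularity under an arbitrary reshuffle of the collision class.

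What makes the paper's version work is a different choice of ``type'': take the cut decomposition $G'=d(G)+\sum_i c_iK_{S_i,T_i}$ with $\dd_\square(G,G')\le \alpha\epsilon/(10k^2)$, and let the types be the cells of the common refinement of the $S_i,T_i$. Then two vertices in the same cell are \emph{exactly} interchangeable in $G'$, so any two $k$-partitions with the same cell-count table are isomorphic as partitioned weighted graphs in $G'$; regularity of one transfers to the other with no loss, and the cut-norm bound pushes it back to $G$. This makes both the concentration step and the type-collision step disappear simultaneously. Your degree-profile types do not carry any such interchangeability.

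A smaller but genuine second gap: the ADLRY and Alon--Naor checkers only distinguish $\epsilon$-regular from not-$\epsilon^{O(1)}$-regular, which is useless for separating $(1+\alpha)\epsilon$ from $(1+\alpha/2)\epsilon$. The verification step needs exactly the $(1\pm\alpha)$-approximate pair-regularity tester that is the paper's Theorem~\ref{thm:pair} (and its Corollary~\ref{cor:check-partition-regular}); that result is itself proved via the weak-regularity decomposition and is not available from the earlier literature you cite.
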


In other words, if a graph has a regular partition with few parts, then we can quickly find a regular partition (losing very slightly on the regularity) with the same number of parts. In particular, we obtain an algorithmic regularity lemma which is optimal in terms of the number of parts as it is exactly the same as in the non-algorithmic version (with a very slight loss on the regularity parameter).

We also give an approximation algorithm for checking whether a given pair of vertex subsets is $\epsilon$-regular, in the sense that if the pair is not $\epsilon$-regular, then we can algorithmically find a pair of vertex subsets that witness that its failure to be $(1-\alpha)\epsilon$-regular. We will formulate this in terms of regularity of bipartite graphs. We say that bipartite graph $G$ with bipartition $(X,Y)$ is $\epsilon$-regular if the pair $(X,Y)$ is $\epsilon$-regular.

\begin{theorem}\label{thm:pair}
There exists an $O_{\epsilon,\alpha}(n^2)$ time algorithm, which, given $\epsilon,\alpha>0$, and a bipartite graph $G$ between vertex sets $X$ and $Y$, each of size at most $n$, outputs one of the following:
\begin{enumerate}
\item Correctly states that $G$ is $\epsilon$-regular;
\item Finds a pair of vertex subsets $U \subseteq X$ and $W \subseteq Y$ which realize that $G$ is not $(1-\alpha)\epsilon$-regular, i.e., $|U| \ge (1 - \alpha)\epsilon |X|$, $|W| \ge (1-\alpha)\epsilon |Y|$, and $|d(U,W) - d(X,Y)| > (1-\alpha)\epsilon$.
\end{enumerate}
\end{theorem}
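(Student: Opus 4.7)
The plan is to use the algorithmic weak Frieze--Kannan regularity lemma (FK) to reduce the problem to a brute-force enumeration over constantly many candidate witnesses. Set $d := d(X, Y)$ and $A(x, y) := \mathbbm{1}_{xy \in E(G)} - d$ on $X \times Y$. First, I would invoke a bipartite algorithmic FK regularity lemma with parameter $\delta = c \alpha \epsilon^3$ for a sufficiently small absolute constant $c$, producing in $O_{\epsilon, \alpha}(n^2)$ time equitable partitions $X = X_1 \cup \cdots \cup X_p$ and $Y = Y_1 \cup \cdots \cup Y_q$, with $p, q = O_{\epsilon, \alpha}(1)$, such that the step function $A'$ (whose value on $X_i \times Y_j$ equals the average of $A$ there) satisfies $\norm{A - A'}_\square \le \delta \abs{X}\abs{Y}$. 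Choosing $c$ small enough, equitability and the size of $p, q$ automatically give $\max_i \abs{X_i} \le \eta \abs{X}$ and $\max_j \abs{Y_j} \le \eta \abs{Y}$ for some $\eta \ll \alpha \epsilon^3$. I would then enumerate all $2^{p+q}$ block-union pairs $(U_I, W_J) := \paren{\bigcup_{i \in I} X_i, \bigcup_{j \in J} Y_j}$ and test each for the three conditions $\abs{U_I} \ge (1 - \alpha)\epsilon\abs{X}$, $\abs{W_J} \ge (1 - \alpha)\epsilon\abs{Y}$, and $\abs{d(U_I, W_J) - d} > (1 - \alpha)\epsilon$. Output any qualifying pair; otherwise declare $G$ to be $\epsilon$-regular. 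Precomputing the $p \times q$ array of edge counts $e(X_i, Y_j)$ takes $O(n^2)$ time; each subsequent test costs $O(pq)$, giving total runtime $O_{\epsilon, \alpha}(n^2)$.

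For correctness, the nontrivial direction is that if $G$ is not $\epsilon$-regular, then some block-union pair qualifies. Suppose $(U^*, W^*)$ witnesses non-$\epsilon$-regularity with $\sum_{U^* \times W^*} A > \epsilon \abs{U^*}\abs{W^*}$ (the opposite-sign case is symmetric). Consider the bilinear functional
\[
h'(u, w) := \sum_{i, j} A'_{ij}\, u_i w_j \abs{X_i}\abs{Y_j} - (1 - \alpha)\epsilon \Bigl( \sum_i u_i \abs{X_i} \Bigr) \Bigl( \sum_j w_j \abs{Y_j} \Bigr)
\]
on $(u, w) \in [0, 1]^p \times [0, 1]^q$ subject to the mass constraints $\sum_i u_i \abs{X_i} \ge (1 - \alpha)\epsilon\abs{X}$ and $\sum_j w_j \abs{Y_j} \ge (1 - \alpha)\epsilon\abs{Y}$. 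For any $(\hat u, \hat w) \in \{0, 1\}^p \times \{0, 1\}^q$, these mass constraints together with $h'(\hat u, \hat w) > \delta\abs{X}\abs{Y}$ imply that the corresponding block union $(U_{\hat u}, W_{\hat w})$ qualifies (positive-sign case), via one application of the cut-norm bound. Evaluating $h'$ at the block projection $\tilde u_i := \abs{U^* \cap X_i}/\abs{X_i}$, $\tilde w_j := \abs{W^* \cap Y_j}/\abs{Y_j}$ (feasible since $\sum_i \tilde u_i \abs{X_i} = \abs{U^*}$), and using block-constancy of $A'$ together with the extension of $\norm{A - A'}_\square \le \delta\abs{X}\abs{Y}$ from $\{0,1\}$-indicator vectors to $[0,1]$-valued vectors by a layer-cake decomposition, yields $h'(\tilde u, \tilde w) \ge \alpha\epsilon\abs{U^*}\abs{W^*} - \delta\abs{X}\abs{Y} \ge (\alpha\epsilon^3 - \delta)\abs{X}\abs{Y}$, which is $\gtrsim \alpha\epsilon^3\abs{X}\abs{Y}$ for the chosen $\delta$.

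Since $h'$ is bilinear and the feasible region is the product of two polytopes (each the box $[0,1]^p$ or $[0,1]^q$ intersected with one linear mass constraint), its maximum is attained at an extreme point $(\bar u, \bar w)$, each coordinate of which has at most one fractional value (as only $p$, resp.\ $q$, linearly independent constraints can be tight and there are only $p$ box constraints plus one mass constraint in each factor). Rounding each fractional coordinate up to $1$ preserves the mass constraints, and by directly bounding partial derivatives, $\abs{\partial h'/\partial u_{i_0}} \le 2\abs{X_{i_0}}\abs{Y} \le 2\eta\abs{X}\abs{Y}$ and similarly for $w$, so rounding costs at most $4\eta\abs{X}\abs{Y}$ in $h'$. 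For $\eta \ll \alpha\epsilon^3$, the rounded $(\hat u, \hat w) \in \{0, 1\}^p \times \{0, 1\}^q$ remains feasible and satisfies $h'(\hat u, \hat w) > \delta\abs{X}\abs{Y}$, yielding the desired qualifying block union and contradicting the failure of the enumeration to find one.

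The main obstacle I expect is the interaction between the bilinear optimization and the size constraints at the rounding step: when the mass constraints are active at the optimum of $h'$, the Lagrange multipliers govern the cost of forcing a fractional coordinate to an integer value, and keeping this cost below the margin $\alpha\epsilon^3\abs{X}\abs{Y}$ forces the FK partition to have uniformly small parts, $\eta \ll \alpha\epsilon^3$. This in turn requires the FK parameter $\delta$ to be small enough that $p, q$ is a large (but still $n$-independent) constant, so the enumeration remains $O_{\epsilon, \alpha}(n^2)$. Handling the absolute value in the density-deviation condition is straightforward, requiring only that the argument be carried out for each of the two signs.
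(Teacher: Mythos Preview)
Your approach is correct and genuinely different from the paper's, though built on the same Frieze--Kannan approximation foundation. The paper uses the \emph{overlay} form of weak regularity: it approximates $G$ by $G' = d(G) + \sum_{i=1}^k c_i K_{S_i,T_i}$ with $k = (\alpha\epsilon)^{-O(1)}$, then searches over discretized tuples $(u, u_1, \ldots, u_k, w, w_1, \ldots, w_k)$ encoding the putative intersection sizes $|U|, |U \cap S_i|, |W|, |W \cap T_i|$, checks feasibility of each tuple via a linear program in $2^k$ variables, and tests a surrogate irregularity inequality on each feasible tuple. Your route instead uses the \emph{partition} form, enumerates all $2^{p+q}$ block unions directly, and replaces the paper's inequality-chasing correctness proof with a clean bilinear-optimization argument: the maximum of $h'$ over a product of polytopes is attained at a product of extreme points, each with at most one fractional coordinate, and rounding that coordinate costs at most $O(\eta)|X||Y|$ because every part is small. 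This is more elementary (no LP) and the correctness argument is tidier. The price is that your enumeration runs over $2^{p+q}$ objects with $p,q$ already exponential in $(\alpha\epsilon)^{-O(1)}$, so the search step is doubly exponential in $(\alpha\epsilon)^{-1}$, whereas the paper's search over discretized $(2k{+}2)$-tuples is only singly exponential; this difference is invisible in the $O_{\epsilon,\alpha}(n^2)$ statement but matters if one pairs the argument with the $n^{\omega+o(1)}$-time FK algorithm and tracks the dependence on $\epsilon,\alpha$.

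One small wrinkle: you write that choosing $c$ small ``automatically'' forces $\max_i |X_i| \le \eta|X|$ via the resulting size of $p$. Weak regularity only gives an \emph{upper} bound on the number of parts, not a lower one, so this is not literally automatic. The fix is trivial---either invoke FK starting from an initial equipartition into $\lceil 1/\eta\rceil$ parts, or equitably refine the FK output afterward (which can only improve the step approximation $A'$)---but it should be stated. Also, your layer-cake remark is unnecessary: since $A'$ is block-constant, $\sum_{i,j} A'_{ij}\tilde u_i\tilde w_j|X_i||Y_j|$ equals $\sum_{U^*\times W^*} A'$ exactly, and the ordinary cut-norm bound applied to the sets $U^*, W^*$ already gives what you need.
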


Using this result, by checking the regularity of each pair of parts in a partition, we have the following corollary. 

\begin{corollary}\label{cor:distinguish}
Given $\epsilon,\alpha>0$, we can distinguish in time $O_{\epsilon,\alpha}(n^2)$ between an $\epsilon$-regular partition and a partition which is not $(1-\alpha)\epsilon$-regular.   
\qed
\end{corollary}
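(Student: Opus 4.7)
The plan is to apply Theorem \ref{thm:pair} to each pair of parts in the given partition and then aggregate the outcomes through a simple counting threshold. Writing the partition as $V = V_1 \cup \cdots \cup V_k$, for each of the $\binom{k}{2}$ pairs $(V_i, V_j)$ I would invoke the pair algorithm of Theorem \ref{thm:pair} with parameters $\epsilon$ and $\alpha$ on the bipartite graph induced on $(V_i, V_j)$. Each call returns either (i) a certificate that $(V_i, V_j)$ is $\epsilon$-regular, or (ii) a pair $(U, W)$ witnessing that $(V_i, V_j)$ is not $(1-\alpha)\epsilon$-regular.

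Let $r$ denote the number of pairs producing outcome (ii); the decision rule is to declare the partition $\epsilon$-regular when $r \le \epsilon k^2$ and to declare it not $(1-\alpha)\epsilon$-regular otherwise. For correctness, observe that outcome (i) is only returned when it is correct, so any pair that is actually not $\epsilon$-regular must be counted in $r$. Hence $r$ upper bounds the number of non-$\epsilon$-regular pairs, and $r \le \epsilon k^2$ already certifies $\epsilon$-regularity of the partition by definition. Conversely, if $r > \epsilon k^2 \ge (1-\alpha)\epsilon k^2$ then we have explicit witnesses that more than $(1-\alpha)\epsilon k^2$ pairs fail to be $(1-\alpha)\epsilon$-regular, so the partition is not $(1-\alpha)\epsilon$-regular.

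For the running time, each pair has sides of size at most $\lceil n/k \rceil$, so Theorem \ref{thm:pair} runs in $O_{\epsilon,\alpha}((n/k)^2)$ time per pair; summed over the $\binom{k}{2}$ pairs this is $O_{\epsilon,\alpha}(n^2)$, with no explicit dependence on $k$. I do not anticipate any substantive obstacle, since the corollary is essentially a bookkeeping aggregation on top of Theorem \ref{thm:pair}. The only mild subtlety worth flagging is that both outcomes of Theorem \ref{thm:pair} are absolute (rather than probabilistic or approximate), which is exactly what makes the single threshold $\epsilon k^2$ cleanly separate the two advertised cases.
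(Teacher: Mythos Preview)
Your approach is correct and matches the paper's: the sentence preceding the corollary (and the later Corollary~\ref{cor:check-partition-regular}) simply says to apply Theorem~\ref{thm:pair} to every pair of parts and threshold on the number of ``bad'' outcomes. One small bookkeeping fix: since the paper's definition of an $\epsilon$-regular partition counts ordered pairs $(V_i,V_j)$ against the bound $\epsilon k^2$, your threshold on the $\binom{k}{2}$ unordered checks should be an $\epsilon$-fraction (equivalently $\epsilon k^2/2$ for off-diagonal ordered pairs), not $\epsilon k^2$; with the stated threshold the implication ``$r\le \epsilon k^2 \Rightarrow \cP$ is $\epsilon$-regular'' can fail by a factor of two.
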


\begin{remark}
In Theorems \ref{thm:partition} and \ref{thm:pair} and Corollary \ref{cor:distinguish}, the dependence of the running time on the parameters $\epsilon,\alpha,k$ may be improved at the cost of worsening the dependence on $n$ from $n^2$ to $n^{\omega+o(1)}$. This is because we use the recent algorithmic version of the Frieze--Kannan weak regularity lemma due to Dellamonica, Kalyanasundaram, Martin, R\"odl, and Shapira \cite{DKMRS12}, \cite{DKMRS15}. In the more recent paper \cite{DKMRS15}, they develop an $O_\epsilon(n^2)$ algorithm for finding a weak $\epsilon$-regular partition, but it has a double exponential in $1/\epsilon$ constant factor dependence. The older paper \cite{DKMRS12} has the advantage of not having this poor dependence on the regularity parameter. See Section \ref{sec:algo-weak-reg} for more information.	
\end{remark}

Counting the number of copies of a graph $H$ in another graph $G$ is a famous problem in algorithmic graph theory. For example, a special case of this problem is to determine the clique number, the size of the largest clique, in a graph. This is a well-known NP-complete problem. In fact, H\r{a}stad \cite{Has} and Zuckerman \cite{Zuck} proved that it is NP-hard  to approximate the clique number of a $n$-vertex graph within a factor $n^{1-\epsilon}$ for any $\epsilon>0$.

There is a fast {\it probabilistic} algorithm for approximating up to $\epsilon$ the fraction of $k$-tuples which make a copy of $H$. The algorithm takes $s=10\epsilon^{-2}$ samples of $k$-tuples of vertices uniformly at random from $G$ and outputs the fraction of them that make a copy of $H$. The number of copies of $H$ is a binomial random variable with standard deviation at most $s^{1/2}/2$, and hence the fraction of $k$-tuples which make a copy of $H$ in this random sample is likely within $\epsilon$ of the fraction of $k$-tuples which makes copies of $H$. However, this algorithm has no guarantee of success. It is therefore desirable to have a {\it deterministic}  algorithm for counting copies which gives an approximation for the subgraph count with complete certainty. 

The algorithmic regularity lemma is useful for deterministically approximating the number of copies of any fixed graph in a graph. Indeed, the counting lemma shows that if $k$ parts $V_1,\ldots,V_k$ are pairwise regular, then the number of copies of a graph $H$ with $k$ vertices with the copy of the $i$th vertex in $V_i$ is close to what is expected in a random graph with the same edge densities between the pairs of parts. Adding up over all $k$-tuples of parts in an $\epsilon$-regular partition and noting that almost all $k$-tuples of parts have all its pairs $\epsilon$-regular, we get an algorithm which runs in time $O_{\epsilon,k}(n^2)$ which computes the number of copies of a graph $H$ on $k$ vertices in a graph on $n$ vertices up to an additive error of $\epsilon n^k$. The major drawback with this result is the tower-type dependence on $\epsilon$ and $k$, which comes from the number of parts in the regularity lemma. 

Duke, Lefmann, and R\"odl \cite{DLR} gave a faster approximation algorithm for the number of copies of $H$ in a graph $G$. They first develop a weak regularity lemma which has an exponential dependence instead of a tower-type 
dependence. This gives an algorithm which runs in time $2^{(k/\epsilon)^{O(1)}}n^{\omega + o(1)}$ which computes the number of copies of a graph $H$ on $k$ vertices in a graph on $n$ vertices up to an additive error of $\epsilon n^k$.

In Section \ref{sec:counts}, we will use the algorithmic version of the Frieze--Kannan weak regularity lemma \cite{DKMRS12} to get the following even faster approximation algorithm for the subgraph counting problem. It improves the previous exponential dependence on the error parameter to a polynomial dependence. Here $v(H)$ and $e(H)$ denote the number of vertices and and edges in $H$, respectively.

\begin{theorem}\label{thm:countingalgthm}
Let $H$ be a graph, and let $\epsilon>0$ be given. There is a deterministic algorithm that runs in time $O_H(\epsilon^{-O(1)} n^{\omega+o(1)} + \epsilon^{- O(e(H))} n)$, and finds the number of copies of $H$ in $G$ up to an error of at most $\epsilon n^{v(H)}$.
\end{theorem}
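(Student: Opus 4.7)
The approach is to replace $G$ by a low-complexity approximation whose homomorphism count can be evaluated via a direct tensor expansion. My first step would be to set $\epsilon' := \epsilon/(Ce(H))$ for a sufficiently large absolute constant $C$ and invoke the algorithmic Frieze--Kannan weak regularity lemma of \cite{DKMRS12}, extracting a cut-matrix decomposition of the adjacency matrix $A_G$ of $G$ in the form
$$A_G = W + E, \qquad W = \sum_{l=1}^{r} \alpha_l \mathbf{1}_{S_l} \mathbf{1}_{T_l}^T,$$
where $S_l, T_l \subseteq V(G)$, $|\alpha_l|\le 1$, $r = O(\epsilon'^{-2}) = O_H(\epsilon^{-2})$, and $\snorm{E}_{\square} \le \epsilon' n^2$. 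The crucial point is that this cut-matrix form has only polynomially many pieces in $1/\epsilon$; this is what transforms the exponential $\epsilon$-dependence in the Duke--Lefmann--R\"odl algorithm into the polynomial dependence promised here. By \cite{DKMRS12}, this step takes $\epsilon^{-O(1)} n^{\omega + o(1)}$ time.

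Next I would apply the standard cut-norm counting lemma (proved by telescoping over the edges of $H$ and observing that, once the non-endpoint vertex variables are summed out, the coefficient of each edge replacement factors as a product over its two endpoints) to obtain
$$\abs{\mathrm{hom}(H, A_G) - \mathrm{hom}(H, W)} \le e(H)\, n^{v(H)-2}\, \snorm{E}_{\square} \le e(H)\epsilon' n^{v(H)} \le \epsilon n^{v(H)}/C.$$
Translating homomorphism counts into labeled subgraph counts via inclusion--exclusion over partitions of $V(H)$ introduces only $O_H(1)$ multiplicative overhead and a lower-order error of size $O_H(n^{v(H)-1})$, both of which we absorb into the budget $\epsilon n^{v(H)}$ by choosing $C$ large.

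The heart of the argument is the fast evaluation of $\mathrm{hom}(H, W)$. I would expand each factor $W(x_a, x_b) = \sum_l \alpha_l \mathbf{1}_{S_l}(x_a)\mathbf{1}_{T_l}(x_b)$ inside $\prod_{(a,b)\in E(H)} W(x_a, x_b)$ and interchange summations to get
$$\mathrm{hom}(H, W) = \sum_{L: E(H) \to [r]} \paren{\prod_{e \in E(H)} \alpha_{L(e)}} \prod_{v \in V(H)} \abs{\bigcap_{e \ni v} Q_{L(e), v}},$$
where, under an arbitrary fixed orientation of $E(H)$, $Q_{l,v}$ is $S_l$ if $v$ is the tail of the corresponding edge and $T_l$ otherwise. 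For each labeling $L$ all the vertex intersections can be computed together in $O(\sum_v \deg_H(v) \cdot n) = O(e(H) \cdot n)$ time via a single pass through $V(G)$, and summing over the $r^{e(H)} = \epsilon^{-O(e(H))}$ labelings yields the total cost $\epsilon^{-O(e(H))}\, n$, matching the bound in the theorem.

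The main obstacle I anticipate is ensuring that the algorithmic weak regularity lemma we cite actually delivers a cut-matrix decomposition of polynomial rank $O(\epsilon^{-2})$, rather than merely a vertex partition into $2^{O(\epsilon^{-2})}$ parts: the latter would replace $\epsilon^{-O(e(H))}$ by $2^{\Omega(e(H)/\epsilon^2)}$ in the tensor-expansion step and destroy the polynomial $\epsilon$-dependence. If \cite{DKMRS12} does not already produce such a decomposition directly, one can obtain it by running the standard Frieze--Kannan iterative procedure, appending one cut matrix per round until the cut-norm residual drops below $\epsilon' n^2$; since each round increases a Frobenius-norm-based energy by a definite amount, only $O(\epsilon'^{-2})$ rounds are required, and each can be implemented in $\epsilon^{-O(1)} n^{\omega + o(1)}$ time using the cut-norm approximation subroutine underlying the same reference.
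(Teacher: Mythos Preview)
Your proposal is correct and follows essentially the same approach as the paper: obtain a cut decomposition $G' = d(G) + \sum_{i\le k} c_i K_{S_i,T_i}$ with $k = \epsilon^{-O(1)}$ via the algorithmic weak regularity lemma (the paper's Corollary~\ref{cor:reg-sum}), invoke the counting lemma, and then expand $\hom(H,G')$ over the $(k+1)^{e(H)}$ edge labelings, evaluating each term in $O_H(n)$ time by computing intersections of the $S_i,T_i$. One minor quantitative point: the algorithmic witness-finding of \cite{DKMRS12} only certifies $\epsilon^3/1000$-irregularity, so the energy increment per round is $\epsilon^{O(1)}$ rather than $\epsilon^2$, giving $k = \epsilon^{-O(1)}$ rather than the $O(\epsilon^{-2})$ you state---but this is exactly what the theorem claims and what the paper proves, so nothing is lost.
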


For example, we can count the number of cliques of order $1000$ in an $n$-vertex graph up to an additive error $n^{1000-10^{-6}}$ in time $O(n^{2.4})$. 

\medskip

In the final section of the paper, we turn our attention to a regularity lemma for permutations. Cooper \cite{Coo06} proved a permutation regularity lemma which was later refined by Hoppen, Kohayakawa, and Sampaio \cite{HKS12}. In Section \ref{sec:perm-reg}, we give a new short proof of the permutation regularity lemma, improving the number of parts from tower-type to single exponential, and further extend it to an interval regularity lemma for graphs and matrices.

\section{Equitable partitions with small irregularity}
\label{sec:equitable}

Let $G$ be a graph, and $X$ and $Y$ be (not necessarily disjoint) vertex subsets. 
The \emph{irregularity} of the pair $X,Y$ is defined as
\[
\irreg(X,Y) = 
\max_{U \subseteq X, W \subseteq Y}\bigl |e(U,W)-|U||W|d(X,Y)\bigr|.
\]
The \emph{irregularity} of a partition $\cP$ of the vertex set of $G$ is defined to be
\[\irreg(\cP)=\sum_{X,Y \in \cP}\irreg(X,Y).
\] 
One version of Szemer\'edi's regularity lemma \cite[Lemma 2.2]{LS07} states that given any $\epsilon$, one can find an $M(\epsilon)$ such that any graph $G$ has a partition into at most $M(\epsilon)$ parts with irregularity at most $\epsilon |V|^2$. The proof of the regularity lemma gives $M(\epsilon) \le \tower(O(\epsilon^{-2}))$, and we now know that this is essentially tight \cite{Gow97},\cite{CF12},\cite{FL}, in the sense that $M(\epsilon) = \tower(\Theta(\epsilon^{-2}))$ \cite{FL}. Here the tower function is defined by $\tower(1)=2$ and $\tower(k+1)=2^{\tower(k)}$. 

We say that a partition is \emph{equitable} if any two parts differ in size by at most one. It is a convenient property to have in a regularity partition. The main result of this section shows that for any vertex partition, one can refine it a bit further to obtain a partition which is close to an equitable partition whose irregularity is not substantially larger.

\begin{theorem} \label{thm:mainequi} 
	Let $0 < \alpha < 1/2$, and $m$ be a positive integer, and let $G$ be a graph on $n \ge 10^8 m\alpha^{-5}$ vertices.
	If $\cP$ is a vertex partition of $G$ into $m$ parts, then there is an equitable vertex partition $\cQ$ of $G$ into at most $4m/\alpha$ parts such that $\irreg(\cQ) \leq  \irreg(\cP) + \alpha n^2$. 
\end{theorem}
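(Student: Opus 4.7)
My plan is to build $\cQ$ as a near-refinement of $\cP$: subdivide each part into pieces of common size $s$ close to $\alpha n/(4m)$, gather the residual vertices into additional pieces of the same size, and adjust slightly so that the resulting partition is equitable. Concretely, choose an integer $s$ close to $\alpha n/(4m)$; the hypothesis $n \ge 10^8 m \alpha^{-5}$ guarantees $s$ is large. For each $V_i \in \cP$, write $|V_i| = q_i s + r_i$ with $0 \le r_i < s$, and partition $V_i$ arbitrarily into $q_i$ pieces of size $s$ together with a residual $R_i$ of size $r_i$. Collect $R = R_1 \cup \cdots \cup R_m$ (with $|R| < ms \le \alpha n/4$) and subdivide $R$ into pieces of size $s$ or $s+1$ to obtain an equitable partition $\cQ$. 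For a suitable choice of $s$, the total number of parts is at most $n/s + m + O(1) \le 4m/\alpha$.

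To bound the irregularity, I classify each pair $(X, Y) \in \cQ^2$ as \emph{boundary} if at least one of $X, Y$ is a piece formed from residual vertices in $R$ (and may therefore cross several parts of $\cP$), and \emph{interior} otherwise. The boundary parts have total size at most $|R| < \alpha n/4$, so using the trivial bound $\irreg(X, Y) \le \abs{X}\abs{Y}$, the boundary contribution to $\irreg(\cQ)$ is at most $2\abs{R} n \le \alpha n^2/2$.

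The main obstacle is bounding the interior contribution $\sum_{(X, Y) \text{ interior}} \irreg(X, Y) \le \irreg(\cP) + \alpha n^2/2$. For each $(V_i, V_j) \in \cP^2$ and each interior sub-pair $(X, Y)$ with $X \subseteq V_i, Y \subseteq V_j$, the decomposition
\[
\irreg(X, Y) \le \max_{U \subseteq X,\, W \subseteq Y} \abs{e(U, W) - \abs{U}\abs{W}\, d(V_i, V_j)} + \abs{X}\abs{Y}\, \abs{d(X, Y) - d(V_i, V_j)}
\]
separates the estimate into a \emph{cut-norm} term (against the coarser density $d(V_i, V_j)$) and a \emph{density-defect} term. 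The density defect per sub-pair is controlled by $\abs{X}\abs{Y}\,\abs{d(X,Y)-d(V_i,V_j)} \le \irreg(V_i, V_j)$ (taking $U = X, W = Y$ in the definition of $\irreg(V_i, V_j)$). The trickier point is that local cut norms over sub-rectangles need not sum to the global cut norm, so the naive bound on the cut-norm term loses a factor of roughly $(n/s)^2 = O(m^2/\alpha^2)$ when summed over all sub-pairs within $V_i \times V_j$. I expect the proof to overcome this either by a smarter subdivision -- for instance, choosing sub-pieces that approximately preserve the witnessing rectangles for $\irreg(V_i, V_j)$ -- or by a delicate averaging argument that exploits the generous size condition $n \ge 10^8 m \alpha^{-5}$ (supplying several powers of $\alpha$ of slack) to keep the aggregated loss within the $\alpha n^2/2$ budget.
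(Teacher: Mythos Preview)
Your outline correctly identifies the structure of the argument---chop each part into pieces of size roughly $\alpha n/(4m)$, absorb the residual vertices at cost $O(\alpha n^2)$, and then control the interior pairs---and you have also put your finger on exactly the right obstacle: for an \emph{arbitrary} subdivision, the sum $\sum_{X \subseteq V_i,\, Y \subseteq V_j} \irreg(X,Y)$ can vastly exceed $\irreg(V_i,V_j)$, because the local witnessing rectangles need not assemble into anything global. However, you do not actually resolve this; you end with speculation about ``smarter subdivision'' or ``delicate averaging'' without a concrete mechanism, and neither of your suggested routes is viable as stated. In particular, choosing sub-pieces that respect the witnessing rectangle for a single pair $(V_i,V_j)$ cannot work, since the same subdivision of $V_i$ must serve all $j$ simultaneously, and the witnessing sets for different $j$ are unrelated.

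The missing idea is to subdivide \emph{randomly}: pick the size-$k$ pieces of each $V_i$ uniformly at random and invoke the First Sampling Lemma of Borgs--Chayes--Lov\'asz--S\'os--Vesztergombi, which says that for uniformly random $X' \subseteq X$, $Y' \subseteq Y$ of size $k$ one has $\irreg(X',Y')/k^2 = \irreg(X,Y)/(|X||Y|) + O(k^{-1/4})$ with high probability. Taking expectations and summing over all sub-pairs gives
\[
\EE\Bigl[\sum_{X',Y'} \irreg(X',Y')\Bigr] \le \irreg(\cP) + O(k^{-1/4}) n^2,
\]
and the hypothesis $n \ge 10^8 m \alpha^{-5}$ is exactly what forces $k^{-1/4} \lesssim \alpha$. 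A single good outcome of the random subdivision then does the job. Your decomposition into a cut-norm term plus a density-defect term is not needed and, as you noted yourself, does not close.
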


Let $M_{eq}(\epsilon)$ be the smallest $M$ such that, for any graph $G = (V,E)$, there is an equitable partition into at most $M$ parts with total irregularity is at most $\epsilon |V|^2$. We have $M_{eq}(\epsilon) \geq M(\epsilon)$ trivially. As a consequence of Theorem \ref{thm:mainequi}, we show directly that adding the condition that the partition is equitable has a very small effect on the size of the smallest partition with small irregularity. 

\begin{theorem} \label{thm:equinotmuchmore}
Let $0<\epsilon <1$ and  $0<\alpha < 1/2$. We have $M_{eq}(\epsilon+\alpha) \leq  \alpha^{-O(1)}M(\epsilon)$. 
\end{theorem}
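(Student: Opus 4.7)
The plan is to combine the (non-equitable) irregularity regularity lemma, which by definition of $M(\epsilon)$ produces a partition $\cP$ of any graph $G$ into at most $M(\epsilon)$ parts with $\irreg(\cP) \le \epsilon n^2$, with Theorem \ref{thm:mainequi}, which refines any partition into an equitable one while paying only $\alpha n^2$ extra in irregularity and only a factor $4/\alpha$ extra in the number of parts. The composition of these two facts would give exactly an equitable partition with $\irreg \le (\epsilon+\alpha) n^2$ and $\le 4 M(\epsilon)/\alpha$ parts.

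More precisely, I would proceed as follows. Given a graph $G$ on $n$ vertices, first apply the irregularity version of Szemer\'edi's regularity lemma to obtain a partition $\cP$ with $m \le M(\epsilon)$ parts and $\irreg(\cP) \le \epsilon n^2$. Then I need to invoke Theorem \ref{thm:mainequi} with this $\cP$ and parameter $\alpha$, which yields an equitable partition $\cQ$ of $G$ into at most $4m/\alpha \le 4 M(\epsilon)/\alpha$ parts with
\[
\irreg(\cQ) \le \irreg(\cP) + \alpha n^2 \le (\epsilon + \alpha) n^2.
\]
By the definition of $M_{eq}$, this shows $M_{eq}(\epsilon+\alpha) \le 4 M(\epsilon)/\alpha$.

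The main (minor) obstacle is that Theorem \ref{thm:mainequi} has the hypothesis $n \ge 10^8 m \alpha^{-5}$, so the above argument is only valid when $n$ is large compared to $M(\epsilon)/\alpha^5$. For the complementary regime $n < 10^8 M(\epsilon) \alpha^{-5}$, I would simply take $\cQ$ to be the partition of $V(G)$ into singletons: this is trivially equitable, has $n$ parts (which is at most $10^8 M(\epsilon) \alpha^{-5}$), and has irregularity $0$ because for any singleton pair $X = \{x\}, Y = \{y\}$ the only subsets $U \subseteq X, W \subseteq Y$ are $\emptyset$ and the pair itself, on which $e(U,W) = |U||W|d(X,Y)$ exactly.

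Taking the maximum of the two bounds gives $M_{eq}(\epsilon+\alpha) \le 10^8 \alpha^{-5} M(\epsilon) = \alpha^{-O(1)} M(\epsilon)$, as claimed. No delicate estimation is needed beyond what is already packaged into Theorem \ref{thm:mainequi}; the content of the theorem is really that Theorem \ref{thm:mainequi} lets one equitize essentially for free, and the present statement is the clean corollary.
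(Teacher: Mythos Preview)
Your proof is correct and essentially identical to the paper's own argument: apply Theorem~\ref{thm:mainequi} when $n$ is large enough, and otherwise partition into singletons (which have zero irregularity), then take the worse of the two bounds on the number of parts. The only cosmetic difference is that the paper uses the threshold $n \ge 10^8 m \alpha^{-5}$ with $m$ the actual number of parts of $\cP$, whereas you use $10^8 M(\epsilon)\alpha^{-5}$; since $m \le M(\epsilon)$ your threshold is more conservative and works just as well.
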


In particular, taking $\alpha$ small but not too small, such as $\alpha=2^{-1/\epsilon}$, we see that the tower height in Szemer\'edi's regularity lemma is not significantly affected by adding the equitability requirement. 

Note that Theorem~\ref{thm:mainequi} also applies to graphs $G$ whose number of parts in the regularity partition is not  as large as the worst case $M(\epsilon)$. To prove Theorem \ref{thm:mainequi}, we randomly divide each part of the partition $\cP$ into parts of (essentially) equal size (apart from a small remaining subset), and then arbitrarily partition the relatively few remaining vertices into parts of equal size to obtain an equitable partition. We show that this works with high probability.

As a first step, the following lemma shows that with high probability, a pair of random subsets $X',Y'$ of a pair of parts $X,Y$ induces roughly the same subgraph density as $X$ and $Y$.
 
\begin{lemma} 
\label{lem:random-subsets-preserve-density}
Let $X$ and $Y$ be vertex subsets of a graph $G$. Let $X' \subseteq X$ and $Y' \subseteq Y$ be picked uniformly at random with $|X'| = |Y'|=k$. Then
\[
\PP\left( | d(X',Y')-d(X,Y)| < \delta \right) \ge 1 - 2e^{-\delta^2k/4}.
\]
\end{lemma}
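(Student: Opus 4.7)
The plan is to bound each of the two terms in the triangle inequality
\[
|d(X',Y') - d(X,Y)| \le |d(X',Y') - d(X',Y)| + |d(X',Y) - d(X,Y)|
\]
by $\delta/2$ with high probability, using Hoeffding's inequality for sampling without replacement.

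For the first term, I would condition on $X'$ and, for each $y \in Y$, set $b_y := e(X',\{y\})/k \in [0,1]$. Then $e(X',Y') = \sum_{y \in Y'} k b_y$, so $d(X',Y') = k^{-1}\sum_{y \in Y'} b_y$ is the mean of $k$ values drawn uniformly without replacement from $\{b_y\}_{y \in Y}$. The population mean of the $b_y$ equals $e(X',Y)/(k|Y|) = d(X',Y)$, so by Hoeffding's inequality (which applies at least as strongly to sampling without replacement as to sampling with replacement, per Hoeffding 1963),
\[
\PP\bigl(|d(X',Y') - d(X',Y)| \ge \delta/2 \bigm| X'\bigr) \le 2\exp(-k\delta^2/2),
\]
and the bound survives integration over $X'$. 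For the second term I would apply the same argument with $X'$ now playing the role of the random sample: writing $c_x := e(\{x\},Y)/|Y| \in [0,1]$ yields $d(X',Y) = k^{-1}\sum_{x \in X'} c_x$, a mean of $k$ values drawn without replacement from $\{c_x\}_{x \in X}$ with population mean $d(X,Y)$, giving the same Hoeffding bound $2\exp(-k\delta^2/2)$.

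A union bound combines these into $\PP(|d(X',Y') - d(X,Y)| \ge \delta) \le 4\exp(-k\delta^2/2)$. When $k\delta^2 \ge 4\ln 2$, this is at most $2\exp(-k\delta^2/4)$, which is the desired estimate; when $k\delta^2 < 4\ln 2$, the target bound $2e^{-k\delta^2/4}$ already exceeds $1$, so the claim is vacuous. The only real technical input is Hoeffding's inequality in the without-replacement setting (classical), so I do not anticipate a significant obstacle; the main thing to verify carefully is the two expectations $\EE[d(X',Y') \mid X'] = d(X',Y)$ and $\EE[d(X',Y)] = d(X,Y)$ that make the two-step decomposition clean, together with the factor-of-$2$ comparison that reconciles the union-bound constant with the stated exponent $k\delta^2/4$.
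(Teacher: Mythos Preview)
Your argument is correct, but it takes a different route from the paper. The paper uses a single vertex-exposure martingale: it reveals the $2k$ vertices of $X'$ and $Y'$ one at a time, observes that each revelation changes $e(X',Y')$ by at most $k$, and applies Azuma--Hoeffding once to obtain $\PP(|e(X',Y')-k^2d(X,Y)|\ge \delta k^2)\le 2e^{-\delta^2k/4}$ directly. Your approach instead splits via the triangle inequality into two sampling-without-replacement problems and applies Hoeffding's inequality to each, then combines with a union bound and a small case analysis to match the stated constant.

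Both are valid. Your method is arguably more elementary (no martingales, just the classical Hoeffding bound for sample means), and the intermediate estimate $4e^{-k\delta^2/2}$ you obtain is in fact sharper than the paper's bound in the nontrivial range. The paper's martingale argument is a bit slicker in that it handles the joint randomness in one stroke and lands exactly on the stated bound without the final case split.
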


\begin{proof}
Consider picking the vertices of $X'$ and $Y'$ one at the time, starting with the ones in $X'$. Let $Z_0,\ldots,Z_{2k}$ be the martingale where $Z_i$ is the expected value of $e(X',Y')$ conditioned on knowing the first $i$ vertices already chosen (this is sometimes referred to as the vertex-exposure martingale). We have $|Z_{i}-Z_{i-1}| \leq k$ as the choice of each vertex in $X'$ and $Y'$ changes the final $e(X',Y')$ by at most $k$. 
By the Azuma--Hoeffding inequality (see Chapter 7 of Alon and Spencer \cite{AlSp}),
\[
\PP\left( |Z_{2k}-Z_0| \geq t \right) \leq 2 e^{-t^2/(4k^3)}.
\]
We have $Z_0= k^2 d(X,Y)$ and $Z_{2k}=e(X',Y')$. 
Set $t=\delta k^2$, we obtain
\[
\PP\left( | e(X',Y')- k^2 d(X,Y) | \geq \delta k^2 \right) \leq 2e^{-\delta^2 k / 4}.
\]
Noting that $e(X',Y') = k^2d(X',Y')$, the lemma follows.
\end{proof}

The next lemma show that the irregularity parameter remains roughly the same when restricted to a random, much smaller, subset of vertices. Recall that the cut metric $\dd_\square$ between two graphs $G$ and $H$ on the same vertex set $V = V(G) = V(H)$ is defined by
\[
\dd_\square(G,H) := \max_{U,W \subseteq V} \frac{|e_G(U,W) - e_H(U,W)|}{|V|^2}.
\]
When $G$ and $H$ are bipartite graphs on $V = X \cup Y$, we define the cut metric as
\[
\dd_\square(G,H) := \max_{U \subseteq X, W \subseteq Y} \frac{|e_G(U,W) - e_H(U,W)|}{|X||Y|}.
\]
We also use the same notation for edge-weighted graphs, where $e(U,W)$ denotes the sum of weights of all edges in $U \times W$.

\begin{lemma}\label{lem:random-subsets-preserve-irreg}
Let $X,Y$ be vertex subsets of a graph $G$. Let $X' \subseteq X$ and $Y' \subseteq Y$ be picked uniformly at random with $|X'|=|Y'|=k$. Then with probability at least $1 - 6e^{-\sqrt{k}/10}$,
\begin{equation}
	\label{eq:irreg-diff}
	\left| \frac{\irreg(X',Y')}{k^2}-\frac{\irreg(X,Y)}{|X||Y|} \right| \le \frac{9}{k^{1/4}},
\end{equation}
\end{lemma}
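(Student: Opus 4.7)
The plan is to prove the two one-sided bounds
$$\irreg(X',Y')/k^2 - \irreg(X,Y)/(\abs X\abs Y) \le 9 k^{-1/4} \quad\text{and}\quad \irreg(X,Y)/(\abs X\abs Y) - \irreg(X',Y')/k^2 \le 9 k^{-1/4}$$
separately, each conditional on the good event from Lemma~\ref{lem:random-subsets-preserve-density} (applied with $\delta = k^{-1/4}$) that $\abs{d(X,Y)-d(X',Y')} < k^{-1/4}$, which fails with probability at most $2e^{-\sqrt k/4}$.

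\textbf{Easy direction (lower bound on $\irreg(X',Y')/k^2$).} Pick a maximizer $(U^*,W^*) \subseteq X \times Y$ of $\irreg(X,Y)$ and examine the sampled pair $(U^*\cap X',\, W^*\cap Y')$. Applying the vertex-exposure martingale used in the proof of Lemma~\ref{lem:random-subsets-preserve-density} separately to the three random variables $\abs{U^*\cap X'}$, $\abs{W^*\cap Y'}$, and $e(U^*\cap X',\,W^*\cap Y')$, the Azuma--Hoeffding inequality concentrates each around its proportional mean---$k\abs{U^*}/\abs X$, $k\abs{W^*}/\abs Y$, and $k^2 e(U^*,W^*)/(\abs X\abs Y)$ respectively---to within additive error $O(k^{7/4})$, with per-variable failure probability $e^{-\Omega(\sqrt k)}$. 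Combined with $\abs{d(X,Y)-d(X',Y')} < k^{-1/4}$, these bounds yield
$$\frac{\irreg(X',Y')}{k^2} \;\ge\; \frac{\abs{e(U^*\cap X',\,W^*\cap Y') - \abs{U^*\cap X'}\abs{W^*\cap Y'}\,d(X',Y')}}{k^2} \;\ge\; \frac{\irreg(X,Y)}{\abs X\abs Y} - O(k^{-1/4}).$$

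\textbf{Hard direction (upper bound on $\irreg(X',Y')/k^2$).} The naive trivial bound, obtained by viewing a sample-maximizing pair $(U',W')\subseteq X'\times Y'$ as a pair in $X\times Y$, only yields
$$\irreg(X',Y')/k^2 \le \irreg(X,Y)/k^2 + \abs{d(X,Y)-d(X',Y')},$$
whose right-hand side has the wrong denominator $k^2$ rather than $\abs X\abs Y$. A genuine sampling-concentration argument is required: no $(U',W')\subseteq X'\times Y'$ should produce discrepancy much exceeding $(k^2/(\abs X\abs Y))\cdot\irreg(X,Y)$. To control the sample-dependent supremum, I would discretize by introducing an auxiliary random partition of $X$ and of $Y$ into $t\approx\sqrt k$ nearly equal buckets; by Chernoff, each bucket meets $X'$ (respectively $Y'$) in $\approx\sqrt k$ vertices. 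Each $U'\subseteq X'$ is then described up to bounded Hamming error by its vector of bucket-intersection sizes, giving a net of profiles; for each profile, the vertex-exposure martingale concentrates the corresponding sample discrepancy around $(k^2/(\abs X\abs Y))\irreg(X,Y)$ with per-profile failure probability $e^{-\Omega(\sqrt k)}$, and a union bound over the net completes the argument.

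The main obstacle is the hard direction: simple containment arguments produce the wrong normalization, so one must genuinely exploit randomness of the sample via a cut-norm sampling estimate. All of the work lies in arranging the discretization so that (a) its log-cardinality is small enough to survive the union bound against the per-profile Azuma exponent $\sqrt k / 10$, and (b) its granularity is fine enough to approximate each sampled pair in cut discrepancy to within $O(k^{7/4})$. The exponents $\sqrt k$ in the probability and $k^{1/4}$ in the error reflect this precise balance---$\sqrt k$ buckets of size $\approx\sqrt k$, giving per-bucket granularity $1/\sqrt k$ and net log-size comparable to the Azuma exponent.
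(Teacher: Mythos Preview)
The paper takes a different and much shorter route: it identifies $\irreg(X,Y)/(|X||Y|)$ with the bipartite cut distance $\dd_\square(G[X,Y], d(X,Y))$ and invokes the First Sampling Lemma of \cite{BCLSV08} (quoted as \cite[Lemma~10.5]{Lov}) as a black box---random $k$-samples preserve cut distance up to $8/k^{1/4}$ with probability at least $1-4e^{-\sqrt{k}/10}$. One then appeals to Lemma~\ref{lem:random-subsets-preserve-density} at $\delta = k^{-1/4}$ to replace $d(X,Y)$ by $d(X',Y')$, picking up the extra $1/k^{1/4}$ in the error and the extra $2e^{-\sqrt{k}/4}$ in the failure probability.

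Your proposal amounts to reproving the First Sampling Lemma from scratch, and the easy direction is fine. The hard direction, however, has a genuine gap. With $t\approx\sqrt{k}$ buckets and profiles given by intersection-size vectors in $\{0,\dots,\sqrt{k}\}^{2t}$, the net has log-cardinality $\Theta(\sqrt{k}\log k)$, not $O(\sqrt{k})$; your claim that it is ``comparable to the Azuma exponent $\sqrt{k}/10$'' is off by a factor of $\log k$, so the union bound does not close for large $k$. There is also a more basic issue: two sets $U'\subseteq X'$ sharing the same intersection-size profile can have very different discrepancies, since the profile records only \emph{how many} sampled vertices fall in each bucket, not \emph{which ones}, and random buckets carry no regularity that would make this irrelevant; so knowing the profile does not pin down the sample discrepancy to within $O(k^{7/4})$. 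The standard proofs of the sampling lemma avoid netting entirely: one applies Azuma once to the $O(1/k)$-Lipschitz functional $(X',Y')\mapsto \dd_\square(G[X',Y'],H[X',Y'])$ to concentrate it around its expectation with failure probability $e^{-\Omega(\sqrt{k})}$, and bounds that expectation separately via a comparison with a second independent sample.
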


\begin{proof}
We use the so-called First Sampling Lemma \cite[Theorem 2.10]{BCLSV08} (we quote the statement from \cite[Lemma 10.5]{Lov}): if $G$ and $H$ are weighted graphs with $V(G) = V(H)$ and edge weights in $[0,1]$, and $S \subseteq V(G)$ is chosen uniformly at random with $|S| = k$, then with probability at least $1 - 4 e^{-\sqrt{k}/10}$, 
\[
\left| \dd_\square(G[S], H[S]) - \dd_\square(G, H) \right| \le \frac{8}{k^{1/4}}.
\]
Let $G[X,Y]$ denote the bipartite (weighted) graph with vertex sets $X$ and $Y$, and whose edges are induced from $G$. A bipartite version of this sampling lemma holds true, that with probability at least $1 - 4 e^{-\sqrt{k}/10}$,
\begin{equation} 
\label{eq:sampling-lemma}
\left| \dd_\square(G[X',Y'], H[X',Y']) - \dd_\square(G[X,Y], H[X,Y]) \right| \le \frac{8}{k^{1/4}},
\end{equation}
and its proof is nearly identical to the first version stated above. Note that 
\[
\irreg(X, Y) = |X||Y| \dd_\square(G[X,Y], d(X, Y)),
\]
where the second argument denotes the complete graph with loops with all edge weights equal to $d(X,Y)$. Similarly, 
\[
\irreg(X',Y') = k^2 \dd_\square(G[X',Y'], d(X', Y')).
\]
By letting $H$ in \eqref{eq:sampling-lemma} be the complete graph with loops and all edge weights $d(X, Y)$, we obtain that with probability at least $1 - 4 e^{-\sqrt{k}/10}$,
\[
\left| \frac{\irreg(X',Y')}{k^2}-\frac{\irreg(X,Y)}{|X||Y|} \right|  \le \frac{8}{k^{1/4}} + |d(X, Y) - d(X', Y')|.
\]
We then apply Lemma~\ref{lem:random-subsets-preserve-density} with $\delta = 1/k^{1/4}$ to reach the desired conclusion.
\end{proof}

As a corollary of Lemma~\ref{lem:random-subsets-preserve-density}, and noting that the LHS of \eqref{eq:irreg-diff} is at most 1, we have
\begin{equation} \label{eq:irreg-subset-upper-bound}
	\EE \left( \frac{\irreg(X',Y')}{k^2} \right)
	\le \frac{\irreg(X,Y)}{|X||Y|} + \frac{9}{k^{1/4}} + 6e^{-\sqrt{k}/10}
	\le \frac{\irreg(X,Y)}{|X||Y|} + \frac{20}{k^{1/4}}.
\end{equation}

\begin{proof}[Proof of Theorem \ref{thm:mainequi}]
We shall omit floors and ceilings for the sake of clarity of presentation. Let $k = \alpha n/(4m)$. Let $V_1,\ldots,V_m$ be the parts of $\cP$. Uniformly at random partition each $V_i$ into parts of size $k$, with possibly one remainder part of size less than $k$. Call $\cP'$ be the resulting partition.  

By \eqref{eq:irreg-subset-upper-bound},\footnote{It is easy to modify the proof to address the case when $X'$ and $Y'$ are within the same part of $\mathcal{P}$.}
\begin{align*}
	\EE\Biggl(\sum_{\substack{X',Y' \in \cP' \\ |X'| = |Y'| = k}} \irreg(X',Y')\Biggr)
	&\le 
	\sum_{X,Y \in \cP} \left(\irreg(X,Y) + \frac{20}{k^{1/4}}|X||Y| \right)
	\\
	&\le \irreg(\cP) + \frac{20n^2}{k^{1/4}}.
\end{align*}
So there exists some such partition $\cP'$ such that 
\begin{equation} \label{eq:irreg-sum}
	\sum_{\substack{X',Y' \in \cP' \\ |X'| = |Y'| = k}} \irreg(X',Y')
	\le \irreg(\cP) + \frac{20n^2}{k^{1/4}}.
\end{equation}
Fix $\cP'$ to be this partition.

Let $S$ be the union of the parts of $\cP'$ of size less than $k$. 
There is at most one such part of $\cP'$ for each $V_i$, so $|S| < mk$. 
Arbitrarily partition $S$ into sets of size $k$, and let $\cQ$ be the equitable vertex partition consisting of these parts of $S$ along with the parts of $\cP'$ of size $k$.

Parts arising from $S$ contributes at most $2|S|n < 2mkn$ to $\irreg(\cQ)$, whereas the other contributions to $\irreg(\cQ)$ are bounded by \eqref{eq:irreg-sum}. Thus
\[
\irreg(\cQ) \le \irreg(\cP) + \frac{20n^2}{k^{1/4}} + 2mkn
\le \irreg(\cP) + \alpha n^2,
\]
where the last step follows from
\[
\frac{20}{k^{1/4}} = \frac{20(4m)^{1/4}}{(\alpha n)^{1/4}} \le \frac{20(4m)^{1/4}}{(\alpha \cdot 10^{8} m\alpha^{-5})^{1/4}} < \frac{\alpha}{2}
\]
and 
\[
\frac{2mk}{n} = \frac{2m}{n}\frac{\alpha n}{4m} \le \frac{\alpha}{2}.
\]
Therefore $\cQ$ is the required equipartition.
\end{proof}

As a consequence of Theorem \ref{thm:mainequi}, we can prove Theorem \ref{thm:equinotmuchmore}.

\begin{proof}[Proof of Theorem \ref{thm:equinotmuchmore}]
Let $G$ be a graph with a partition $\cP$ of the vertex set into $m \le  M(\epsilon)$ parts with irregularity at most $\epsilon n^2$. If $n < 10^{8} m \alpha^{-5}$, then we just partition the vertices into singleton sets, which has zero irregualarity, using at most $10^{8} m \alpha^{-5} \le 10^{8} \alpha^{-5} M(\epsilon)$ parts. 
Otherwise apply Theorem~\ref{thm:mainequi}  to obtain a partition with irregularity at most $(\epsilon+\alpha)n^2$, and at most $4m/\alpha \le 4\alpha^{-1} M(\epsilon)$ parts.
\end{proof}

\section{Algorithmic weak regularity}
\label{sec:algo-weak-reg}

In this section, we review some results concerning algorithmic versions of the Frieze--Kannan weak regularity lemma. We will be applying these results in subsequent sections.

Given any edge-weighted graph $G$ and any partition $\cP \colon V = V_1 \cup V_2 \cup \dots \cup V_t$ of the vertex set of $G$ into $t$ parts, let $G_\cP$ denote the weighted graph with vertex set $V$ obtained by giving weight $d_{ij} := d(V_i,V_j) = e(V_i,V_j)/(|V_i||V_j|)$ to all pairs of vertices in $V_i \times V_j$, for every $1 \le i \le j \le t$. We say $\cP$ is an \emph{$\epsilon$-regular Frieze--Kannan} (or \emph{$\epsilon$-FK-regular}) partition if $\dd_\square(G,G_\cP)\le \epsilon$. In other words, $\cP$ is an $\epsilon$-regular Frieze--Kannan partition if
\begin{equation} \label{eq:FK-pair}
\left\lvert e(S,T) - \sum_{i,j=1}^t d_{ij} |S \cap V_i||T \cap V_j| \right\rvert \le \epsilon |V|^2.
\end{equation}
for all $S,T \subseteq V$. We say that sets $S$ and $T$ \emph{witness} that $\cP$ is not $\epsilon$-FK-regular if the above inequality is violated.

Frieze and Kannan~\cite{FK99a} proved the following regularity lemma. 

\begin{theorem}[Frieze--Kannan]
Let $\epsilon > 0$. Every graph has an $\epsilon$-regular Frieze--Kannan partition with at most $2^{2/\epsilon^2}$ parts.
\qed
\end{theorem}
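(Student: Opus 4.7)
The plan is to prove this via the classical energy-increment argument, in the form tailored to the weak Frieze--Kannan setting. Define the \emph{energy} of a vertex partition $\cP = \{V_1,\dots,V_t\}$ of $G$ by
\[
q(\cP) = \sum_{i,j=1}^t \frac{|V_i||V_j|}{|V|^2} d(V_i,V_j)^2,
\]
which lies in $[0,1]$ because edge densities do, and which is monotone nondecreasing under refinement by Jensen's inequality applied cell-by-cell.

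The crux is an energy-increment lemma: if $\cP$ is not $\epsilon$-FK-regular, witnessed by sets $S,T \subseteq V$ violating \eqref{eq:FK-pair}, then the common refinement $\cP'$ of $\cP$ with $\{S, V \setminus S\}$ and $\{T, V \setminus T\}$ satisfies $q(\cP') \ge q(\cP) + \epsilon^2$. To prove it, view each partition through its step function $f_\cP \colon V \times V \to [0,1]$ taking value $d_{ij}$ on $V_i \times V_j$, so that $q(\cP) = \|f_\cP\|_2^2 / |V|^2$ in the counting-measure $L^2$ norm. Since $f_\cP$ and $f_{\cP'}$ are the conditional expectations of the edge indicator $\mathbf{1}_E$ onto the $\sigma$-algebras generated by $\cP \otimes \cP$ and $\cP' \otimes \cP'$ respectively, with the first refining into the second, the tower property gives $f_\cP = \EE[f_{\cP'} \mid \cP \otimes \cP]$, and the Pythagorean identity yields
\[
q(\cP') - q(\cP) = \frac{\|f_{\cP'} - f_\cP\|_2^2}{|V|^2}.
\]
Now $\mathbf{1}_{S\times T}$ is $\cP' \otimes \cP'$-measurable (since $\cP'$ refines both $\{S, V \setminus S\}$ and $\{T, V \setminus T\}$) and satisfies $\|\mathbf{1}_{S \times T}\|_2 \le |V|$, while a direct expansion gives
\[
\ang{f_{\cP'} - f_\cP,\ \mathbf{1}_{S \times T}} = e(S,T) - \sum_{i,j} d_{ij}|S \cap V_i||T \cap V_j|,
\]
whose absolute value exceeds $\epsilon|V|^2$ by hypothesis. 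Cauchy--Schwarz then gives $\|f_{\cP'} - f_\cP\|_2 > \epsilon|V|$, hence the $\epsilon^2$ increment.

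Given the lemma, the rest is routine iteration. Start from the trivial partition $\cP_0 = \{V\}$; as long as $\cP_i$ fails to be $\epsilon$-FK-regular, pass to the refinement $\cP_{i+1}$ supplied by the lemma, gaining at least $\epsilon^2$ in energy at each step. Since $q \le 1$, the procedure terminates after at most $\lfloor \epsilon^{-2}\rfloor$ iterations. Each iteration refines by two cuts and so at most quadruples the number of parts, so the final partition has at most $4^{1/\epsilon^2} = 2^{2/\epsilon^2}$ parts. The only real content is the energy-increment lemma, which reduces to Pythagoras plus a single application of Cauchy--Schwarz once $q$ is packaged as the squared $L^2$ norm of a step function; I do not anticipate any serious obstacle beyond setting up these identities correctly.
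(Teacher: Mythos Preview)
Your proof is correct and follows exactly the mean-square-density increment strategy that the paper sketches immediately after stating the theorem (the paper itself does not give a proof, marking the statement with \qed and citing \cite{FK99a}). Your $L^2$/conditional-expectation packaging of the energy increment is the standard clean way to carry out the outlined argument, and the part count $4^{\lfloor\epsilon^{-2}\rfloor}\le 2^{2/\epsilon^2}$ falls out as claimed.
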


We apply efficient deterministic algorithms for generating Frieze--Kannan regular partitions. Such algorithms were recently given in \cite{DKMRS12,DKMRS15} by Dellamonica et al. Specifically, in \cite{DKMRS12}, the authors gave an $\epsilon^{-6} n^{\omega+o(1)}$ time algorithm to generate an equitable $\epsilon$-regular Frieze--Kannan partition of a graph on $n$ vertices into at most $2^{O(\epsilon^{-7})}$ parts. Recall that $\omega < 2.373$ is the matrix multiplication exponent. In \cite{DKMRS15} a different algorithm was given which improved the dependence of the running time on $n$ from $O_\epsilon(n^{\omega+o(1)})$ to $O_\epsilon(n^2)$, while sacrificing the dependence of $\epsilon$. Namely, it was shown that there is a deterministic algorithm that finds, in $O(2^{2^{\epsilon^{-O(1)}}} n^2)$ time, an $\epsilon$-regular Frieze--Kannan partition into at most $2^{\epsilon^{-O(1)}}$ parts. It remains an open problem to improve the dependence on $\epsilon$ in the running time.

The proof of the Frieze--Kannan regularity lemma and its algorithmic versions, roughly speaking, run as follows:
\begin{itemize}
	\item Given a partition (starting with the trivial partition with one part), either it is $\epsilon$-FK-regular (in which case we are done), or we can exhibit some pair of subsets $S,T$ of vertices that witness the irregularity by violating \eqref{eq:FK-pair} (in the algorithmic versions, one may only be guaranteed to find $S$ and $T$ that violate \eqref{eq:FK-pair} for some smaller value of $\epsilon$).
	\item Refine the partition by using $S$ and $T$ to split each part into at most four parts, thereby increasing the total number of parts by a small factor. 
	\item Repeat. Use a mean-square-density increment argument to upper bound the number of possible iterations.
\end{itemize}

\begin{remark}
As in the case of the usual regularity lemma, it is possible to obtain an equitable partition in the Frieze--Kannan regularity lemma, increasing the number of parts by a factor of at most 4. We will not need this for our algorithm, however.
\end{remark}

For example, in the algorithmic version \cite{DKMRS12}, the first step (also the key step) is given as the following result \cite[Corollary 3.1]{DKMRS12}.

\begin{theorem}\label{thm:DKMRS12-pair}
There is an $n^{\omega+o(1)}$-time algorithm which, given $\epsilon > 0$, an $n$-vertex graph $G$ and a partition $\cP$ of $V (G)$, does one of the following:
\begin{enumerate}
\item Correctly states that $\cP$ is $\epsilon$-FK-regular;
\item Finds sets $S$, $T$ which witness the fact that $\cP$ is not $\epsilon^3 /1000$-FK-regular. \qed
\end{enumerate} 
\end{theorem}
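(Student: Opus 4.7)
The plan is to encode the Frieze--Kannan irregularity of $\cP$ as the cut norm of the residual matrix $A := M_G - M_{G_\cP}$, where $M_G$ is the $\{0,1\}$-valued adjacency matrix of $G$ and $M_{G_\cP}$ is the $n\times n$ matrix whose entry on $V_i \times V_j$ equals $d_{ij}$. By definition, $\cP$ is $\epsilon$-FK-regular if and only if $\dd_\square(G, G_\cP) = n^{-2}\max_{S,T\subseteq V(G)} |x_S^T A x_T| \le \epsilon$, and a witness to the failure of $(\epsilon^3/1000)$-FK-regularity is exactly a pair $(S,T)$ with $|x_S^T A x_T| > (\epsilon^3/1000) n^2$. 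The algorithm will decide between these alternatives via a spectral proxy.

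First I would build $A$ in $O(n^2)$ time and compute (approximately) the top singular value $\sigma_1(A)$ along with a pair of near-optimal singular vectors $u,v$ in time $n^{\omega+o(1)}$, for instance by forming $A^T A$ with fast matrix multiplication and running power iteration to sufficient precision. The key spectral bound is $|x_S^T A x_T| \le \|x_S\|_2\,\sigma_1(A)\,\|x_T\|_2 \le n\,\sigma_1(A)$, which gives $\dd_\square(G,G_\cP) \le \sigma_1(A)/n$. If the computed estimate satisfies $\widetilde\sigma_1(A) < \epsilon n$ (with a margin for the approximation error), the algorithm outputs that $\cP$ is $\epsilon$-FK-regular and stops.

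Otherwise we have unit vectors $u,v$ with $u^T A v \ge \epsilon n - o(\epsilon n)$, and the task is to round them to $\pm 1$ vectors---equivalently, to indicator vectors $x_S,x_T$---with bilinear form of order $\epsilon^3 n^2$. First I split $u = u^+ - u^-$ and $v = v^+ - v^-$ and use pigeonhole on the four sign combinations to reduce to $u',v' \ge 0$ with $\|u'\|_2,\|v'\|_2 \le 1$ and $(u')^T A v' \ge \epsilon n/4$. Next, zero out coordinates of $u',v'$ of magnitude below $\alpha/\sqrt n$ for a threshold $\alpha$ a small multiple of $\epsilon$; the removed mass contributes at most $O(\alpha n)$ to the form (using $\|A\|_F \le n$ and Cauchy--Schwarz) and is absorbed into the lower bound. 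The truncated vectors now have all nonzero entries in $[\alpha/\sqrt n, 1]$, supported on sets $S_u,S_v$. Partition the coordinates of each into $O(\log(n/\alpha))$ dyadic magnitude bands and apply averaging: there exist scales $\tau,\tau'$ and indicator sets $S \subseteq S_u$, $T \subseteq S_v$ with $\tau\tau'\,|x_S^T A x_T| \ge c\,\epsilon n$. Using $\tau\sqrt{|S|},\tau'\sqrt{|T|} \le 1$ (from $\|u'\|_2,\|v'\|_2 \le 1$) to control $\tau\tau'$ and substituting yields $|x_S^T A x_T| \ge c\,\epsilon^3 n^2$; splitting by sign if necessary gives an unsigned cut witness. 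Each of the $O((\log n)^{O(1)})$ dyadic candidates can be evaluated in $O(n^2)$ time and is dominated by the SVD cost.

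The main obstacle is the rounding step, and in particular controlling the constants to achieve the $\epsilon^3/1000$ bound. A naive two-step sign rounding ($x := \mathrm{sign}(Av)$ then $y := \mathrm{sign}(A^T x)$) only yields $\epsilon^4 n^2$, because each application of the Cauchy--Schwarz inequality $\|w\|_1 \ge \|w\|_2^2/\|w\|_\infty$ with the crude bound $\|w\|_\infty \le \sqrt n$ costs one factor of $\epsilon$. The improved exponent 3 instead comes from exploiting the a priori structure of the top singular vectors---essentially that their $\ell_2$ mass is concentrated on roughly $\epsilon^{-2}$ coordinates of magnitude $\asymp \epsilon/\sqrt n$---which is precisely what the truncation-plus-dyadic-bucketing step above is designed to leverage. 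Once the rounding is structured this way, pinning down the absolute constant $1/1000$ is bookkeeping.
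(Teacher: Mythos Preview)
The paper does not prove this theorem; it is quoted as \cite[Corollary 3.1]{DKMRS12} (note the \qed\ after the statement). Your overall spectral strategy --- compute the top singular value of $A = M_G - M_{G_\cP}$, declare $\cP$ to be $\epsilon$-FK-regular if $\sigma_1(A) < \epsilon n$, and otherwise round the singular vectors to $\{0,1\}$-vectors --- is exactly the Frieze--Kannan/\cite{DKMRS12} approach, and that part is fine.

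The rounding step, however, has a genuine gap. You truncate the \emph{small} coordinates (below $\alpha/\sqrt n$) and then bucket dyadically. But removing small coordinates gives no $\ell_\infty$ control: the surviving entries still range over $[\alpha/\sqrt n,\,1]$, so you need $L=\Theta(\log n)$ dyadic bands, and averaging over the $L^2$ band pairs costs a factor $L^2$. Carrying your inequalities through --- $\tau\tau'\,|x_S^T A x_T|\ge c\epsilon n/L^2$ together with $\tau\tau'\le 1/\sqrt{|S||T|}$ and the trivial $|x_S^T A x_T|\le |S||T|$ --- yields only $|x_S^T A x_T|\ge c'\epsilon^2 n^2/(\log n)^{O(1)}$, which falls below $(\epsilon^3/1000)n^2$ once $n$ is large; the substitution you describe does not produce the exponent $3$. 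The correct move, and the one actually used in \cite{DKMRS12}, truncates in the opposite direction: zero out the entries of $u,v$ with absolute value exceeding $M=O(1/(\epsilon\sqrt n))$. There are at most $1/M^2=O(\epsilon^2 n)$ such coordinates, and since $|(Av)_i|\le\sqrt n$ by Cauchy--Schwarz, this costs only $\sqrt n/M=O(\epsilon n)$ in the bilinear form. With $\|u'\|_\infty,\|v'\|_\infty\le M$ now in hand, the greedy rounding $x=1_{Av'>0}$, $y=1_{A^Tx>0}$ (after sign-splitting) gives $x^T A y\ge (u')^T A v'/M^2\ge c\,\epsilon^3 n^2$ directly, in $O(n^2)$ time, with no bucketing. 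Your closing intuition that the singular vectors have their $\ell_2$ mass concentrated on coordinates of size $\asymp \epsilon/\sqrt n$ is precisely what this large-entry truncation \emph{enforces}; it is not something one may assume a priori.
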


In \cite{DKMRS15}, an alternative theorem is given for finding an irregular pair. The statement below is a consequence of \cite[Theorem 8.1]{DKMRS15}.

\begin{theorem}\label{thm:DKMRS15-pair}
There is an $O(2^k n^2)$-time algorithm which, given $\epsilon > 0$, an $n$-vertex graph $G$ and a partition $\cP$ of $V (G)$ into $k$ parts, does one of the following:
\begin{enumerate}
\item Correctly states that $\cP$ is $\epsilon$-FK-regular. 
\item Finds sets $S$, $T$ which witness the fact that $\cP$ is not $\epsilon^{O(1)}$-FK-regular.  \qed
\end{enumerate} 
\end{theorem}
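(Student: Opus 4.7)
The plan is to read $\epsilon$-FK-regularity of $\cP$ as a cut-norm bound for the signed weight matrix $M$ on $V \times V$ defined by $M(u,v) := A_G(u,v) - d_{ij}$ for $u \in V_i$, $v \in V_j$. With this notation, $\cP$ is $\epsilon$-FK-regular iff $\max_{S, T \subseteq V} |e_M(S,T)| \le \epsilon n^2$, so the task reduces to approximating this cut norm in time $O(2^k n^2)$ up to a polynomial loss in $\epsilon$. The structural feature to exploit is that, by construction, $M$ has mean zero on each of the $k^2$ blocks $V_i \times V_j$, so one cannot detect irregularity by coarse block-level queries alone and must look at subsets $S,T$ that cut blocks in nontrivial ways.

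The strategy follows the combinatorial template for approximating cut norms, adapted to use the partition. For each of the $2^k$ sign patterns $\sigma \in \{\pm 1\}^k$, I would form the $\pm 1$-valued seed vector $z_\sigma \in \RR^V$ that is constant $\sigma_i$ on each part $V_i$, and use it to drive an $O(n^2)$-time greedy refinement: compute the row vector $w_\sigma := z_\sigma^\top M$, threshold on its sign to obtain a candidate $T$, then compute the column vector $M \mathbf{1}_T$ and threshold it to obtain $S$, and iterate a constant number of rounds. The algorithm outputs the best of the $2^k$ resulting pairs $(S,T)$ if it exceeds $\epsilon^{O(1)} n^2$, and certifies $\epsilon$-FK-regularity otherwise. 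The $O(2^k n^2)$ time bound is clear from the budget of $2^k$ sign patterns times $O(n^2)$ matrix--vector work per pattern.

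The main obstacle is correctness of the declarative branch: one must show that if $\max_{S,T}|e_M(S,T)| > \epsilon n^2$, then some sign pattern $\sigma$ in the enumeration leads to a pair $(S,T)$ with $|e_M(S,T)| > \epsilon^{O(1)} n^2$. This guarantee is precisely the algorithmic content of Theorem~8.1 of \cite{DKMRS15}: it proves that for any matrix equipped with a partition of its rows and columns into $k$ parts, the associated cut norm can be approximated in $O(2^k n^2)$ time up to a polynomial factor via exactly this sign-enumeration plus thresholding framework. Granting that theorem as a black box, the deduction of Theorem~\ref{thm:DKMRS15-pair} is essentially bookkeeping: apply it to $M$ with the partition inherited from $\cP$, and translate the matrix cut-norm output back into the vertex-set language of FK-regularity.
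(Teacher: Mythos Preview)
Your proposal is correct and matches the paper's treatment: the paper does not prove Theorem~\ref{thm:DKMRS15-pair} either, but simply records it as a consequence of \cite[Theorem~8.1]{DKMRS15} (note the \qed in the statement). Your write-up supplies more detail about the reduction to the cut norm of $M = A_G - G_\cP$ and a sketch of what the algorithm in \cite{DKMRS15} does, but the logical content---invoke \cite[Theorem~8.1]{DKMRS15} as a black box and translate---is the same.
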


\medskip

There is a variant of the weak regularity lemma, where the final output is not a partition of $V$ into $2^{\epsilon^{-O(1)}}$ parts, but rather an approximation of the graphs as a sum of $\epsilon^{-O(1)}$ complete bipartite graphs each assigned some weight. Below by \emph{weighted graph} we mean a graph with edge-weights. For $S,T \subseteq V$, by $K_{S,T}$ we mean the weighted graph where an edge $\{s,t\}$ has weight 1 if $s \in S$ and $t \in T$ (and weight 2 if $s,t \in S \cap T$) and weight zero otherwise. For any $c \in \RR$, by $c G$ we mean the weighted graph obtained from $G$ by multiplying every edge-weight by $c$. For a pair of weighted graphs $G_1,G_2$ on the same set of vertices, we will use the notation $G_1+G_2$ to denote the graph on the same vertex set with edge weights summed (and weight $0$ corresponding to not having an edge). Additionally, we write $c$ to mean the constant graph with all edge-weights equal to $c$. 

\begin{theorem}[Frieze--Kannan] \label{thm:FK-overlay}
Let $\epsilon > 0$. Let $G$ be any weighted graph with $[-1,1]$-valued edge weights. There exists some $k \le O(\epsilon^{-2})$, subsets $S_1, \dots, S_k, T_1, \dots, T_k \subseteq V$, and $c_1, \dots, c_k \in [-1,1]$, so that  
\[
\dd_\square(G, d(G) + c_1K_{S_1, T_1} + \dots + c_k K_{S_k,T_k}) \le \epsilon.
\]
\qed
\end{theorem}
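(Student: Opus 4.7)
My plan is the standard $L^2$ energy-increment argument, with a small modification to accommodate the constraint $c_i \in [-1, 1]$.

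Let $n = |V|$, and for a weighted graph $F$ on $V$ set $\|F\|_2^2 := \sum_{u,v \in V} F(u,v)^2$ and $\langle F_1, F_2 \rangle := \sum_{u,v} F_1(u,v) F_2(u,v)$. I would initialize $H_0 := d(G)$, the constant graph equal to the mean edge weight of $G$, and iteratively build approximators
\[
H_i := d(G) + c_1 K_{S_1, T_1} + \dots + c_i K_{S_i, T_i}.
\]
At step $i$, if $\dd_\square(G, H_i) \le \epsilon$ I halt. Otherwise, the definition of the cut norm supplies $S, T \subseteq V$ with $|e_{G - H_i}(S, T)| > \epsilon n^2$, which I take as $(S_{i+1}, T_{i+1})$. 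The unconstrained minimizer of $c \mapsto \|G - H_i - c K_{S, T}\|_2^2$ is the orthogonal-projection coefficient $c^\ast := \langle G - H_i, K_{S, T}\rangle / \|K_{S, T}\|_2^2$; since the theorem requires $c_{i+1} \in [-1, 1]$, I would truncate $c_{i+1} := \max(-1, \min(1, c^\ast))$ and set $H_{i+1} := H_i + c_{i+1} K_{S, T}$.

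The key step is an energy-drop estimate of order $\epsilon^2 n^2$ per iteration:
\[
\|G - H_i\|_2^2 - \|G - H_{i+1}\|_2^2 \;=\; 2 c_{i+1} \langle G - H_i, K_{S, T}\rangle - c_{i+1}^2 \|K_{S, T}\|_2^2 \;=\; \Omega(\epsilon^2 n^2).
\]
Note that $|\langle G - H_i, K_{S,T}\rangle|$ is, up to a bounded constant from the symmetric convention for $K_{S,T}$, equal to $|e_{G - H_i}(S,T)| > \epsilon n^2$, and $\|K_{S,T}\|_2^2 = O(n^2)$. If $|c^\ast| \le 1$, taking $c_{i+1} = c^\ast$ yields the projection gain $\langle G - H_i, K_{S,T}\rangle^2 / \|K_{S, T}\|_2^2 = \Omega(\epsilon^2 n^2)$. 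If $c^\ast > 1$ (the case $c^\ast < -1$ is symmetric), then $\langle G - H_i, K_{S,T}\rangle > \|K_{S, T}\|_2^2$, so $c_{i+1} = 1$ produces a drop of at least $2\langle G - H_i, K_{S,T}\rangle - \|K_{S, T}\|_2^2 > \langle G - H_i, K_{S,T}\rangle = \Omega(\epsilon n^2)$, which exceeds $\epsilon^2 n^2$ for $\epsilon \le 1$. Since $\|G - H_0\|_2^2 \le \|G\|_2^2 \le n^2$, the iteration terminates after $k = O(\epsilon^{-2})$ steps, at which point $\dd_\square(G, H_k) \le \epsilon$.

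The only genuine subtlety, and what I expect to be the main obstacle, is the truncation of $c^\ast$ to $[-1, 1]$ required by the theorem statement: the textbook version of the argument (with unconstrained real coefficients) terminates immediately via orthogonal projection, and one must verify that clipping $c^\ast$ does not destroy the quadratic energy drop. The case analysis above handles this cleanly, because whenever $|c^\ast| > 1$ the cut matrix $K_{S, T}$ is ``small'' relative to the irregularity it witnesses, so even the bounded choice $c_{i+1} = \pm 1$ extracts a drop linear in $\epsilon$, comfortably dominating the required $\epsilon^2 n^2$.
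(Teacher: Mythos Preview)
Your proposal is correct and follows essentially the same $L^2$ energy-decrement argument that the paper sketches (and attributes in full to \cite{LS07}): at each step, find $S,T$ witnessing large cut norm of the residual and subtract a suitable multiple of $K_{S,T}$, tracking the $L^2$ drop to bound the number of iterations. The one place you go beyond the paper's sketch is your explicit case analysis for the truncation $c_{i+1}\in[-1,1]$; the paper's informal description takes $c$ to be the residual density $e_{G-H_i}(S,T)/(|S||T|)$ without discussing this constraint, so your treatment is actually more careful on that point.
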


See \cite[Lemma 4.1]{LS07} for a proof (given there in a more general setting of arbitrary Hilbert spaces). Roughly speaking, to find the appropriate $S_i, T_i, c_i$, in the second step of the above outline of the proof of the weak regularity lemma, instead of using $S$ and $T$ to refine the existing partition, we subtract $c K_{S,T}$ from the remaining weighted graph (starting with $G$), where $c$ is the density between $S$ and $T$ in the remaining weighted graph. We record the corresponding $S_i, T_i, c_i$ in step $i$ of this iteration. We can bound the number of iterations by observing that the $L^2$ norm of $G - d(G) - c_1K_{S_1,T_1} - \dots - c_iK_{S_i,T_i}$ must decrease by a certain amount at each step.

As for the algorithmic version, using Theorems~\ref{thm:DKMRS12-pair} and \ref{thm:DKMRS15-pair} (or minor modifications thereof), we can efficiently approximate $G$ as a weighted sum of $\epsilon^{-O(1)}$ complete bipartite graphs.

\begin{corollary} \label{cor:reg-sum}
There exists a
$\min\{ \epsilon^{-O(1)}n^{\omega+o(1)}, O(2^{2^{\epsilon^{-O(1)}}}n^2) \}$ 
time algorithm which, given $\epsilon > 0$ and an $n$-vertex graph $G$, outputs subsets $S_1, \dots, S_k, T_1, \dots, T_k \subseteq V(G)$ and real numbers $c_1, \dots, c_k$, for some $k \le \epsilon^{-O(1)}$, such that
\[
\dd_\square(G, d(G) + c_1K_{S_1, T_1} + \dots + c_k K_{S_k,T_k}) \le \epsilon.
\]
\qed
\end{corollary}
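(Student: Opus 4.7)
The plan is to mirror the non-algorithmic proof sketch of Theorem~\ref{thm:FK-overlay} given just after its statement, inserting the algorithmic witness-finder from Theorem~\ref{thm:DKMRS12-pair} or Theorem~\ref{thm:DKMRS15-pair} in place of the existential ``find an irregular pair'' step. Initialize $H_0 = d(G)$ and the residual $R_0 = G - d(G)$ (a weighted graph with edge weights in $[-1,1]$). At step $i \ge 1$, invoke the witness-finder on the current residual $R_{i-1} = G - H_{i-1}$: either it certifies $\dd_\square(R_{i-1}, 0) \le \epsilon$, in which case halt and output the recorded $(S_j, T_j, c_j)_{j<i}$; or it returns $S_i, T_i \subseteq V$ with $|e_{R_{i-1}}(S_i, T_i)| \ge \epsilon^{O(1)} n^2$. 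In the latter case set $c_i = e_{R_{i-1}}(S_i, T_i)/(|S_i||T_i|)$, the optimal rank-one coefficient, and update $H_i = H_{i-1} + c_i K_{S_i, T_i}$.

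The iteration count is controlled by the standard $L^2$ (mean-square) energy increment argument. A direct expansion gives
\[
\|R_i\|_2^2 \;=\; \|R_{i-1}\|_2^2 - \frac{e_{R_{i-1}}(S_i,T_i)^2}{|S_i|\,|T_i|} \;\le\; \|R_{i-1}\|_2^2 - \epsilon^{O(1)} n^2,
\]
using $|S_i||T_i| \le n^2$ and the lower bound on $|e_{R_{i-1}}(S_i,T_i)|$. Since $\|R_0\|_2^2 \le n^2$, the process must terminate after $k \le \epsilon^{-O(1)}$ iterations, at which point $\dd_\square\bigl(G,\, d(G) + \sum_{j=1}^k c_j K_{S_j,T_j}\bigr) = \dd_\square(R_k, 0) \le \epsilon$, as required.

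The main obstacle is adapting the witness-finder: Theorems~\ref{thm:DKMRS12-pair} and \ref{thm:DKMRS15-pair} are phrased for the specific ``residual'' $G - G_{\mathcal{P}}$ arising from averaging $G$ over a partition $\mathcal{P}$, whereas here the residual $R_{i-1}$ is of the form $G - H_{i-1}$ where $H_{i-1}$ is block-constant on the common refinement $\mathcal{P}_{i-1}$ of the cuts $\{S_j, V\setminus S_j, T_j, V\setminus T_j\}_{j<i}$ but is \emph{not} the density approximation $G_{\mathcal{P}_{i-1}}$ (it is the orthogonal projection of $G$ onto the much smaller subspace spanned by $1, K_{S_1,T_1}, \dots, K_{S_{i-1},T_{i-1}}$). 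I would verify that the cut-norm estimation subroutine underlying both algorithms in fact only uses the matrix of the residual and accepts an arbitrary block-constant reference weighted graph as input --- this is essentially immediate from the structure of their SVD/spectral arguments, but needs to be stated. With this extension in hand, the running-time bounds follow: Theorem~\ref{thm:DKMRS12-pair} gives a per-iteration cost of $n^{\omega+o(1)}$, which multiplied over $\epsilon^{-O(1)}$ iterations yields the first bound; while Theorem~\ref{thm:DKMRS15-pair} applied with $\mathcal{P}_{i-1}$ of size at most $4^{i-1} \le 2^{\epsilon^{-O(1)}}$ gives a per-iteration cost of $O(2^{2^{\epsilon^{-O(1)}}} n^2)$, which dominates the second bound across all iterations.
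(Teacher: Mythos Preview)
Your proposal is correct and follows essentially the same approach as the paper: the paper states the corollary without proof (marked \qed), justifying it only by the preceding sentence ``using Theorems~\ref{thm:DKMRS12-pair} and \ref{thm:DKMRS15-pair} (or minor modifications thereof)'' together with the $L^2$-decrement sketch for Theorem~\ref{thm:FK-overlay}. You have fleshed out exactly this sketch, and the ``main obstacle'' you identify --- adapting the witness-finder to residuals against an arbitrary block-constant reference rather than $G_{\cP}$ --- is precisely what the paper's parenthetical ``minor modifications thereof'' is pointing at.
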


We will use a variation for bipartite graphs.

\begin{corollary} \label{cor:reg-sum-bip}
There exists a
$\min\{ \epsilon^{-O(1)}n^{\omega+o(1)}, O(2^{2^{\epsilon^{-O(1)}}}n^2) \}$ 
time algorithm which, given $\epsilon > 0$ and a bipartite graph $G$ between vertex sets $X$ and $Y$ each with at most $n$ vertices, outputs subsets $S_1, \dots, S_k \subseteq X$ and $T_1, \dots, T_k \subseteq Y$ and real numbers $c_1, \dots, c_k$, for some $k \le \epsilon^{-O(1)}$, such that
\[
\dd_\square(G, d(X,Y) + c_1K_{S_1, T_1} + \dots + c_k K_{S_k,T_k}) \le \epsilon,
\]
where the constant $d(X,Y)$ here denotes the weighted complete bipartite graph $d(X,Y) K_{X,Y}$.
\qed
\end{corollary}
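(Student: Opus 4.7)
The plan is to mimic the proof of Corollary~\ref{cor:reg-sum}, running the same iterative greedy algorithm in the bipartite setting. Initialize $H_0 := d(X,Y) K_{X,Y}$. At step $i$, given the current approximation $H_i = d(X,Y) K_{X,Y} + \sum_{j=1}^{i} c_j K_{S_j, T_j}$, I would apply a bipartite analogue of Theorem~\ref{thm:DKMRS12-pair} or Theorem~\ref{thm:DKMRS15-pair} to the weighted residual graph $G - H_i$ on $(X,Y)$. Either the subroutine certifies $\dd_\square(G, H_i) \leq \epsilon$, in which case I output $H_i$; or it returns $S \subseteq X$, $T \subseteq Y$ with $|e_G(S,T) - e_{H_i}(S,T)| > \delta\,|X||Y|$ for some $\delta = \epsilon^{O(1)}$. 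In the latter case, set $S_{i+1} := S$, $T_{i+1} := T$, $c_{i+1} := (e_G(S,T) - e_{H_i}(S,T))/(|S||T|)$, and $H_{i+1} := H_i + c_{i+1} K_{S_{i+1}, T_{i+1}}$, and continue.

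The number of iterations is controlled by the standard $L^2$-energy decrement (as in \cite[Lemma~4.1]{LS07}): the value of $c_{i+1}$ is precisely the Frobenius-projection coefficient of $G - H_i$ onto $K_{S_{i+1}, T_{i+1}}$, so
\[
\|G - H_i\|_2^2 - \|G - H_{i+1}\|_2^2 \;=\; \frac{(e_G(S_{i+1}, T_{i+1}) - e_{H_i}(S_{i+1}, T_{i+1}))^2}{|S_{i+1}||T_{i+1}|} \;>\; \delta^2 |X||Y|.
\]
Since $\|G - H_0\|_2^2 \le |X||Y|$, the iteration must terminate after at most $\delta^{-2} = \epsilon^{-O(1)}$ steps, which gives the desired bound $k \le \epsilon^{-O(1)}$.

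It remains to justify the bipartite pair-finding subroutine, which is the main place adaptation is needed. I would obtain it by a standard reduction: view the weighted residual $G - H_i$ as a weighted graph on the disjoint union $\tilde V := X \sqcup Y$ (assigning weight zero to pairs inside $X$ or inside $Y$) and invoke Theorem~\ref{thm:DKMRS12-pair} or~\ref{thm:DKMRS15-pair} directly. Any witness $\tilde S, \tilde T \subseteq \tilde V$ decomposes as $\tilde S = (\tilde S \cap X) \sqcup (\tilde S \cap Y)$ and similarly for $\tilde T$; since the residual is supported on $X \times Y$, a triangle-inequality argument forces one of the two cross pieces $(\tilde S \cap X, \tilde T \cap Y)$ or $(\tilde S \cap Y, \tilde T \cap X)$ to witness bipartite irregularity up to a constant-factor loss, which is absorbed into the $\epsilon^{O(1)}$ tolerance. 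The only mildly delicate bookkeeping is reconciling the $(|X|+|Y|)^2$ normalization of the ambient cut norm against the $|X||Y|$ normalization in the statement; this ratio can be forced to be at most $4$ by padding the smaller side with isolated vertices, and the resulting approximation restricts back to the original $(X,Y)$ unchanged. The running-time bound matches that of Corollary~\ref{cor:reg-sum}, since the pair-finding subroutine is invoked $\epsilon^{-O(1)}$ times on a graph with at most $2n$ vertices.
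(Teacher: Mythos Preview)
Your proposal is correct and follows essentially the same approach the paper implicitly has in mind: the paper states Corollary~\ref{cor:reg-sum-bip} without proof (note the \qed in the statement), introducing it simply as ``a variation for bipartite graphs'' of Corollary~\ref{cor:reg-sum}, which in turn is derived from Theorems~\ref{thm:DKMRS12-pair} and~\ref{thm:DKMRS15-pair} ``(or minor modifications thereof)'' via exactly the iterative $L^2$-decrement scheme you describe. Your reduction of the bipartite pair-finding step to the non-bipartite one (embed in $X\sqcup Y$, split the witness, pad to equalize sizes) is the natural way to carry out those ``minor modifications,'' and your bookkeeping on the energy decrement and iteration count is accurate.
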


\section{Approximation algorithm for subgraph counts}
\label{sec:counts}

Suppose that we are given a graph $G$ on $n$ vertices, and we would like to find the number of copies of a small graph $H$ on $k$ vertices that are contained in $G$. We would like to count them up to an error at most $\epsilon n^k$. In this section, we will provide a deterministic algorithm that can do so. Specifically, we prove Theorem~\ref{thm:countingalgthm}, reproduced below for convenience.

\begin{theorem*}
Let $H$ be a graph, and let $\epsilon>0$ be given. There is a deterministic algorithm that runs in time $O_H(\epsilon^{-O(1)} n^{\omega+o(1)} + \epsilon^{- O(e(H))} n)$, that finds the number of copies of $H$ in $G$ up to an error of at most $\epsilon n^{v(H)}$.
\end{theorem*}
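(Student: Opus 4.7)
The plan is to apply the algorithmic weak regularity decomposition of Corollary~\ref{cor:reg-sum-bip} once per edge of $H$, expand the resulting product over edges, and show that each summand factorizes over the vertices of $H$ into a product of set-intersection sizes that can be computed in linear time. This is essentially the strategy of Duke--Lefmann--R\"odl \cite{DLR}, but using the Frieze--Kannan-type decomposition into rank-one complete bipartite pieces in place of a weak regular vertex partition; this is what pushes the $\epsilon$-dependence from exponential down to polynomial.

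Concretely, for each edge $e = \{u_e, v_e\} \in E(H)$, I would apply Corollary~\ref{cor:reg-sum-bip} to $G$ viewed as a bipartite graph between two copies of $V(G)$, with error parameter $\epsilon / (2 e(H))$. This yields in time $\epsilon^{-O(1)} n^{\omega+o(1)}$ an approximation
\[
\widetilde A_e(x,y) := d(G) + \sum_{i=1}^{k_e} c_{e,i}\, \mathbbm{1}[x \in S_{e,i}]\, \mathbbm{1}[y \in T_{e,i}],
\]
with $k_e \le \epsilon^{-O(1)}$, such that $\dd_\square(A_G, \widetilde A_e) \le \epsilon / (2 e(H))$, where $A_G$ is the $\{0,1\}$-valued adjacency function of $G$. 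Writing the homomorphism count as
\[
N(H, G) = \sum_{\phi : V(H) \to V(G)}\ \prod_{e \in E(H)} A_G(\phi(u_e), \phi(v_e)),
\]
I replace $A_G$ by $\widetilde A_e$ one edge at a time. Each replacement changes the sum by at most $(\epsilon / (2 e(H))) \cdot n^{v(H)}$ by the standard cut-norm counting lemma: after fixing $\phi$ on $V(H) \setminus \{u_e, v_e\}$, the difference is a single cut-norm expression for the pair $(\phi(u_e), \phi(v_e))$, which we then sum trivially over the remaining $n^{v(H)-2}$ assignments. Telescoping over the $e(H)$ edges yields total error at most $(\epsilon/2) n^{v(H)}$.

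Expanding each $\widetilde A_e$ as a sum of $k_e + 1$ rank-one indicators (the constant $d(G)$ term and the $k_e$ non-trivial terms), the approximate count becomes
\[
\sum_{\mathbf{i} \in \prod_e \{0, 1, \ldots, k_e\}} \left(\prod_{e} c_{e, i_e}\right) \prod_{v \in V(H)} |B_v(\mathbf{i})|,
\]
where $B_v(\mathbf{i}) := \bigcap_{e \ni v} X_{e, v, i_e}$, with $X_{e,v,i}$ equal to $S_{e,i}$ or $T_{e,i}$ according to a fixed orientation of $e$ at $v$ (and to $V(G)$ when $i_e = 0$, with $c_{e, 0} := d(G)$). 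The factorization over $v$ uses only that, once $\mathbf{i}$ is fixed, each edge indicator is a product of two single-vertex conditions. There are $\prod_e (k_e+1) = \epsilon^{-O(e(H))}$ tuples; for each, every $|B_v(\mathbf{i})|$ can be computed in $O_H(n)$ time by intersecting the $d_H(v) = O_H(1)$ relevant sets as bit-vectors, so this phase runs in $\epsilon^{-O(e(H))} \cdot n$ time. Combined with the decompositions, the total running time matches the theorem.

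Finally, the number of labeled copies of $H$ in $G$ differs from $N(H, G)$ by the count of non-injective maps, which is $O_H(n^{v(H)-1})$; this is at most $(\epsilon/2) n^{v(H)}$ once $n \ge C_H/\epsilon$, and for smaller $n$ we brute-force in $O_H(1)$ time. The main point requiring care is the edge-by-edge telescoping: since edges of $H$ share vertices, each cut-norm substitution has to be correctly interpreted as bounding a bilinear form in two vertex arguments with the other $v(H) - 2$ arguments held fixed, and then summed. Once this is in place, the rest is bookkeeping together with the observation that keeping the $(k_e+1)^{e(H)}$ factor explicit in the exponent of $\epsilon^{-1}$ is exactly what the theorem's running time allows.
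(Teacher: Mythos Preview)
Your proposal is correct and follows essentially the same route as the paper: approximate $G$ in cut norm by a short sum of rank-one bipartite pieces via the algorithmic Frieze--Kannan lemma, invoke the cut-norm counting lemma, and expand the resulting product into $\epsilon^{-O(e(H))}$ terms each of which factorizes over vertices into intersection sizes computable in $O_H(n)$ time. The only differences are cosmetic: the paper applies Corollary~\ref{cor:reg-sum} once to $G$ (and then cites Lemma~\ref{lem:counting} rather than spelling out the telescoping), whereas you apply Corollary~\ref{cor:reg-sum-bip} separately to each edge with a fixed orientation, which lets you skip the paper's extra $2^{e(H)}$-fold expansion of each symmetric $K_{S_i,T_i}$ into $1_{S_i\times T_i}+1_{T_i\times S_i}$.
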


It will be cleaner to work instead with $\hom(H, G)$, the number of graph homomorphisms from $H$ to $G$. This quantity differs from the number of (labeled) copies of $H$ in $G$ by a negligible $O(n^{v(H)-1})$ additive error (the hidden constants here and onward may depend on $H$). We extend the definition of $\hom(H,G)$ to edge-weighted graphs $G$: if the edge $xy$ in $G$ has weight $G(x,y)$, then we define
\[
\hom(H,G)=\sum_{f:V(H) \rightarrow V(G)} \prod_{\{u,v\} \in E(H)}G(f(u),f(v)).
\] 
Note that here $G(x,y)$ is defined on all pairs, with $G(x,y)=0$ if there is no edge between $x$ and $y$.

The idea is to apply a weak regularity lemma in the form of Corollary~\ref{cor:reg-sum}. A weakly regular approximation also gives an approximation of $H$-count, via a standard counting lemma (see \cite[Lemma 10.22]{Lov}):

\begin{lemma}[Counting lemma]\label{lem:counting}
Given any graph $H$ and any two weighted graphs $G_1$ and $G_2$ on the same set $V$ of $n$ vertices, we have
\[
|\hom(H, G_1) - \hom(H, G_2)| \le e(H) \dd_\square(G_1, G_2) n^{v(H)}.
\]
\qed
\end{lemma}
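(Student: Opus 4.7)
The plan is to reduce to the cut metric via a telescoping argument on the edges of $H$, replacing the underlying weighted graph one edge at a time from $G_1$ to $G_2$. Order the edges of $H$ as $e_1, \dots, e_m$ where $m = e(H)$, and for $0 \le i \le m$ define the hybrid
\[
\Phi_i(f) := \prod_{j \le i} G_2(f(e_j)) \cdot \prod_{j > i} G_1(f(e_j)), \qquad f \colon V(H) \to V,
\]
so that $\hom(H,G_1) = \sum_f \Phi_0(f)$ and $\hom(H,G_2) = \sum_f \Phi_m(f)$. Telescoping yields
\[
\hom(H, G_1) - \hom(H, G_2) = \sum_{i=1}^m \sum_f \bigl(\Phi_{i-1}(f) - \Phi_i(f)\bigr),
\]
so it suffices to show that each term of the outer sum has absolute value at most $\dd_\square(G_1, G_2) n^{v(H)}$.

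Fix $i$ and write $e_i = \{u,v\}$. Since $H$ is simple, no edge other than $e_i$ has both endpoints in $\{u,v\}$, so $\Phi_{i-1}(f) - \Phi_i(f)$ factors cleanly as $\bigl(G_1(f(u),f(v)) - G_2(f(u),f(v))\bigr)$ times a product over the edges $e_j$ with $j \ne i$. I would then organize the sum over $f$ by first fixing $f'$, the restriction of $f$ to $V(H) \setminus \{u,v\}$, and then summing over $x := f(u)$ and $y := f(v)$. For each fixed $f'$ the inner sum takes the form
\[
C(f') \sum_{x,y \in V} \alpha_{f'}(x)\, \beta_{f'}(y)\, \bigl(G_1(x,y) - G_2(x,y)\bigr),
\]
where $C(f')$, $\alpha_{f'}(x)$, and $\beta_{f'}(y)$ collect the edge weights over those edges of $H$ incident to neither of $u,v$, to only $u$, and to only $v$, respectively. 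Under the standard assumption that edge weights lie in $[0,1]$, each of these factors lies in $[0,1]$.

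The main technical step I would need is the standard extension of the cut norm from indicator to bounded test functions: for any kernel $W \colon V \times V \to \RR$ and any $\alpha, \beta \colon V \to [0,1]$,
\[
\Bigl|\sum_{x,y} \alpha(x)\beta(y)\, W(x,y)\Bigr| \le \max_{S,T \subseteq V} \Bigl|\sum_{x \in S,\, y \in T} W(x,y)\Bigr|,
\]
proved via the layer-cake representations $\alpha(x) = \int_0^1 \mathbf{1}[\alpha(x) \ge s]\, ds$ and similarly for $\beta$, followed by Fubini. Applied with $W = G_1 - G_2$, this bounds the inner sum uniformly by $\dd_\square(G_1, G_2)\, n^2$. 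Summing over the at most $n^{v(H)-2}$ choices of $f'$, and using $|C(f')| \le 1$, yields the per-edge estimate $\dd_\square(G_1, G_2)\, n^{v(H)}$, and summing over $i = 1, \dots, m = e(H)$ completes the proof. The main (and essentially only) conceptual point is this passage from $\{0,1\}$-valued test sets defining $\dd_\square$ to the $[0,1]$-valued factors $\alpha_{f'}, \beta_{f'}$ produced by the hybrid; the combinatorial bookkeeping is clean because $H$ is simple.
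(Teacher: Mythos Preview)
Your proof is correct and is the standard argument; the paper itself does not prove this lemma but simply cites \cite[Lemma 10.22]{Lov}, whose proof is exactly the telescoping-over-edges argument you give. One small point worth making explicit: the lemma as stated in the paper is only true under a boundedness assumption on the edge weights (you correctly flag this as ``the standard assumption that edge weights lie in $[0,1]$''), and your bound $|C(f')|,|\alpha_{f'}|,|\beta_{f'}|\le 1$ relies on it; without such a bound the inequality fails by scaling.
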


Here is the algorithm. Apply Corollary~\ref{cor:reg-sum} to find any approximation 
\[
G' = d(G) + c_1K_{S_1, T_1} +\dots + c_k K_{S_k,T_k}
\]
of $G$ with $\dd_\square(G,G') \le \epsilon/e(H)$ and $k \le \epsilon^{-O(1)}$. By the counting lemma, it suffices to compute
\begin{equation} \label{eq:hom-expand}
\hom(H,G') = \hom(H, d(G) + c_1K_{S_1, T_1} +\dots + c_k K_{S_k,T_k}),
\end{equation}
which can be done in $O_H(k^{e(H)}n)$ time, as follows. We can expand the right-hand side of \eqref{eq:hom-expand} via the distributive property, writing
\begin{equation} \label{eq:hom-expand-distr}
\hom(H,G') = \sum_{\varphi \colon E(H) \to \{0, \dots, k\}} \hom^\varphi(H, (d(G), c_1K_{S_1,T_1},\dots, c_kK_{S_k,T_k}))
\end{equation}
where for each assignment $\varphi \colon E(H) \to \{0, \dots, k\}$ of edges of $H$ to the components of $G'$ we write $\hom^\varphi(H, (G_0, G_1, \dots, G_k))$ to mean the homomorphism count obtained where the image of each edge $e \in E(H)$ is restricted to $G_{\varphi(e)}$, i.e.,
\[
\hom^\varphi(H, (G_0, G_1, \dots, G_k)) 
= \sum_{f \colon V(H) \to [n]} \prod_{uv \in E(H)} G_{\varphi(uv)}(f(u),f(v)).
\]
Here by $G_i(x,y)$ we mean the edge-weight of $(x,y)$ in $G_i$.

There are $(k+1)^{e(H)}$ possible maps $\varphi$. We claim that each term on the right-hand side of \eqref{eq:hom-expand-distr}, corresponding to some $\varphi$, can be exactly computed in $O_H(n)$ time. Taking out constant factor, it remains to compute the value of $\hom^\varphi(H, (1, K_{S_1, T_1},\dots, K_{S_k,T_k}))$. We further decompose each $K_{S_i,T_i}$ (viewed as an adjacency matrix) as a sum $1_{S_i \times T_i} + 1_{T_i \times S_i}$ and apply the distributive property once again to expand the quantity as a sum of $2^{e(H)}$ terms. Each term counts the number of maps $f \colon V(G) \to [n]$ such that, for every $v \in V(H)$, $f(v) \in \bigcap_{e \in E(H)} R_{\varphi(e)}$ for some choice of $R_{\varphi(e)} = S_{\varphi(e)}$ or $T_{\phi(e)}$. The size of such an intersection can be computed in $O_H(n)$ time, and we can compute this term (one of $2^{e(H)}$ terms) by multiplying over all $v \in V(H)$.
There are $2^{e(H)}$ choices for which summand in $1_{S_i \times T_i} + 1_{T_i \times S_i}$ to take in the expansion over all $i$, and by summing over all $2^{e(H)}$ choices, we can evaluate $\hom^\varphi(H, (1, K_{S_1, T_1},\dots, K_{S_k,T_k}))$. By summing over all $\varphi$ in~\eqref{eq:hom-expand-distr}, we see that $\hom(H,G')$ can be exactly computed in $O_H(k^{e(H)} n)$ time, thereby providing the desired approximation to $\hom(H,G)$.

As for the running time, it took $\epsilon^{-O(1)}n^{\omega+o(1)}$ time to find the approximation $G'$, and it took $O_H(k^{e(H)}n) = O_H(\epsilon^{-O(e(H))}n)$ time to compute $\hom(H, G')$, giving the claimed total running time.

\section{Finding an irregular pair}

In this section, we prove Theorem~\ref{thm:pair}, reproduced below for convenience.

\begin{theorem*}
There exists an $O_{\epsilon,\alpha}(n^2)$ time algorithm, which, given $\epsilon,\alpha>0$, and a bipartite graph $G$ between vertex sets $X$ and $Y$, each of size at most $n$, outputs one of the following:
\begin{enumerate}
\item Correctly states that $G$ is $\epsilon$-regular;
\item Finds a pair of vertex subsets $U \subseteq X$ and $W \subseteq Y$ which realize that $G$ is not $(1-\alpha)\epsilon$-regular, i.e., $|U| \ge (1 - \alpha)\epsilon |X|$, $|W| \ge (1-\alpha)\epsilon |Y|$, and $|d(U,W) - d(X,Y)| > (1-\alpha)\epsilon$.
\end{enumerate}
\end{theorem*}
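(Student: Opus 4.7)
The plan is to reduce $\epsilon$-regularity testing to a bilinear optimization on a product of low-dimensional polytopes, using Corollary~\ref{cor:reg-sum-bip} to obtain a tractable proxy $G'$ for $G$, and then solve that optimization exactly by enumerating vertex pairs. First I would set $\delta := c\alpha\epsilon^3$ for a sufficiently small absolute constant $c$ and invoke Corollary~\ref{cor:reg-sum-bip} to produce $S_1,\dots,S_k \subseteq X$, $T_1,\dots,T_k \subseteq Y$, and reals $c_1,\dots,c_k$ with $k \le \poly(1/(\alpha\epsilon))$ so that the weighted graph $G' := d(X,Y) + \sum_i c_i K_{S_i,T_i}$ satisfies $\dd_\square(G,G') \le \delta$; this step costs $O_{\epsilon,\alpha}(n^2)$ time. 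Let $X = A_1 \sqcup \dots \sqcup A_p$ and $Y = B_1 \sqcup \dots \sqcup B_q$ (with $p,q \le 2^k$) be the common refinements induced by the $S_i$ and $T_i$. Then $G'$ is constant on every block $A_j \times B_l$ with edge-weight $d(X,Y) + M_{jl}$, where $M_{jl} := \sum_{i:\, A_j \subseteq S_i,\, B_l \subseteq T_i} c_i$. Writing $u_j := |U \cap A_j|$ and $w_l := |W \cap B_l|$, the cut-distance bound yields
\[
\left| \left( e_G(U,W) - d(X,Y)|U||W| \right) - \sum_{j,l} M_{jl}\, u_j w_l \right| \le \delta |X||Y| \qquad \text{for every } U \subseteq X,\ W \subseteq Y.
\]

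Next I would enumerate the vertices of the polytopes $P_u := \{u \in \RR_{\ge 0}^p : u_j \le |A_j|,\ \sum_j u_j \ge (1-\alpha)\epsilon|X|\}$ and $P_w \subseteq \RR^q$ (defined analogously). Every vertex of $P_u$ has all but at most one coordinate in $\{0,|A_j|\}$ (the single exception is a ``fractional'' atom pinned by a tight sum constraint), so $|V(P_u)| = O(2^p p)$ and similarly for $V(P_w)$, giving $O_{\epsilon,\alpha}(1)$ candidate pairs. For each vertex pair $(u,w)$ I would instantiate a concrete pair $(U,W)$ by taking whole atoms where $u_j = |A_j|$ together with an arbitrary subset of the fractional atom of the prescribed size (rounded to an integer), compute $e_G(U,W)$ exactly in $O(n^2)$ time, and test whether $|U| \ge (1-\alpha)\epsilon|X|$, $|W| \ge (1-\alpha)\epsilon|Y|$, and $|d_G(U,W) - d(X,Y)| > (1-\alpha)\epsilon$. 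If some candidate certifies irregularity, output it; otherwise declare $G$ to be $\epsilon$-regular. The total running time is $O_{\epsilon,\alpha}(n^2)$.

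For correctness, any output pair is valid by direct verification. Conversely, suppose $G$ is not $\epsilon$-regular, with witness $(U^*,W^*)$; then $|U^*||W^*| \ge \epsilon^2|X||Y|$. Combining $|e_G(U^*,W^*) - d(X,Y)|U^*||W^*|| > \epsilon|U^*||W^*|$ with the displayed approximation and choosing $c$ small enough so that $\delta|X||Y|$ is absorbed into an $(\alpha/4)\epsilon|U^*||W^*|$ loss, we obtain $\bigl|\sum_{j,l} M_{jl} u^*_j w^*_l\bigr| > (1-\alpha/4)\epsilon|U^*||W^*|$. After negating $M$ if necessary we may assume the positive case; setting $N_{jl} := M_{jl} - (1-\alpha/2)\epsilon$, the bilinear form $(u,w) \mapsto \sum_{j,l} N_{jl} u_j w_l$ is then strictly positive at $(u^*,w^*) \in P_u \times P_w$. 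Since this form is linear in each argument with the other fixed, its maximum over $P_u \times P_w$ is attained at a vertex pair (for any global maximizer, replacing $u$ by a vertex of $P_u$ that maximizes the linear function $u \mapsto \sum_{j,l} N_{jl} u_j w_l$ preserves optimality, and then similarly for $w$), so some enumerated pair $(u,w)$ also has $\sum_{j,l} N_{jl} u_j w_l > 0$. Unwinding, $\sum M_{jl} u_j w_l > (1-\alpha/2)\epsilon|U||W|$, and using $|U||W| \ge ((1-\alpha)\epsilon)^2|X||Y|$ to absorb one more $\delta|X||Y|$ slack yields $|e_G(U,W) - d(X,Y)|U||W|| > (1-\alpha)\epsilon|U||W|$, with the size constraints holding because $(u,w) \in P_u \times P_w$.

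The main subtlety is the instantiation step: each vertex of $P_u$ may involve one fractional atom, and we must realize it by some concrete subset of $A_{j^*}$ of the prescribed cardinality. This is harmless only because the cut-distance bound $\dd_\square(G,G') \le \delta$ controls the approximation error \emph{uniformly} over all choices of $(U,W)$, rather than pointwise for specific realizations. It is precisely this flexibility of working with the weighted proxy $G'$, as opposed to committing to a partition-based regularity structure, that lets us lose only a multiplicative $(1-\alpha)$ factor in the regularity parameter rather than the $\epsilon^{O(1)}$ loss incurred by earlier algorithmic approaches such as those of \cite{ADLRY,AN06}.
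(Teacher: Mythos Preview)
Your approach is correct and shares the paper's overall strategy: approximate $G$ by a short sum $G'=d(X,Y)+\sum_i c_i K_{S_i,T_i}$ via Corollary~\ref{cor:reg-sum-bip}, then reduce the regularity test to a finite search. The difference lies in how the search is organized. You pass to the common refinement of the $S_i$ (and of the $T_i$), obtaining atoms $A_1,\dots,A_p$ and $B_1,\dots,B_q$ with $p,q\le 2^k$, and then enumerate the vertices of the polytopes $P_u\times P_w$, exploiting the fact that the bilinear proxy attains its maximum at a vertex pair; each candidate is then verified directly in $G$. The paper instead stays in the $(2k+2)$-dimensional space of the ``macroscopic'' parameters $(|U|,|U\cap S_i|,|W|,|W\cap T_i|)$, performs a grid search over these values, checks feasibility of each tuple via a linear program in the atom variables, and tests a surrogate inequality~\eqref{eq:surrogate-reg} rather than computing $e_G(U,W)$ directly.

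Your route is conceptually cleaner (a genuine bilinear optimization over polytopes, with direct verification in $G$), and avoids both the LP subroutine and the surrogate condition. The paper's route has the advantage of a smaller search space: its grid has $2^{O(k\log(k/(\alpha\epsilon)))}$ points, whereas your vertex enumeration has $2^{O(2^k)}$ candidates. In either case the overall $\epsilon,\alpha$-dependence is dominated by the $2^{2^{(\alpha\epsilon)^{-O(1)}}}$ cost of the $O(n^2)$ weak-regularity algorithm, so the headline $O_{\epsilon,\alpha}(n^2)$ bound is the same. Two minor points worth making explicit in a write-up: the coefficients $c_i$ are bounded (say $|c_i|\le 1$, as in Theorem~\ref{thm:FK-overlay}), which you implicitly use to control the $O(kn)$ error from the single fractional atom; and, as the paper does via~\eqref{eq:k-small}, you should dispose of small $n$ by brute force so that these rounding errors are genuinely negligible.
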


We can assume that $\alpha<1/2$ since for larger $\alpha$ we can just apply the algorithm with a lower value of $\alpha$. We shall give an $O(2^{2^{(\alpha\epsilon)^{-O(1)}}} n^2)$-time algorithm. Using Corollary~\ref{cor:reg-sum-bip}, we approximate $G$ by $G' = d(G) + c_1K_{S_1,T_1} + \dots c_k K_{S_k,T_k}$ so that $k = (\alpha\epsilon)^{-O(1)}$ and $\dd_\square(G,G') \le \alpha\epsilon^3/4$. Here $S_1, \dots, S_k \subseteq X$ and $T_1, \dots, T_k \subseteq Y$.  We shall assume that $k$ is small compared to $|X|$ and $|Y|$, namely,
\begin{equation} \label{eq:k-small}
	100 \cdot k2^k \le \alpha \epsilon^3 \min\{|X|,|Y|\},
\end{equation}
for otherwise  we can accomplish the task by a complete search (say when $|X| \le |Y|$) over all subsets of $X$ in $2^{O(|X|)} = 2^{O(\alpha^{-1}\epsilon^{-3}k2^k)} = 2^{2^{(\alpha\epsilon)^{-O(1)}}}$ time, which is enough.

We say that a sequence of numbers $u, u_1, \dots, u_k, w, w_1, \dots, w_k$ is \emph{feasible} if there exists a function $\mu \colon \colon X \cup Y \to [0,1]$ (we write $\mu(S) = \sum_{x \in S} \mu(x)$ from now on) such that  the following quantities
\[
\frac{|\mu(X) - u|}{|X|}, \frac{|\mu(Y) - w|}{|Y|},
 \frac{|\mu(S_i) - u_i|}{|X|}, \frac{|\mu(T_i) - t_i|}{|Y|}, \text{ for all } 1 \le i \le k,
\]
are each at most $\alpha \epsilon^3 / (100 k)$. One can think of $\mu$ as representing subsets $U \subseteq X$ and $W \subseteq Y$ with $[0,1]$-valued weights attached to its elements. One can determine via a linear program if a given sequence is feasible (see Lemma~\ref{lem:feasible-LP} below).

Here is the algorithm. We perform a complete search through all sequences $u, u_1, \dots, u_k, w, w_1, \dots, w_k$ of nonnegative integers at most $n$, where $u$ and each $u_i$ are divisible by $\lfloor \alpha \epsilon^3 |X| / (100 k) \rfloor$, and $w$ and each $w_i$ are divisible by $\lfloor \alpha \epsilon^3 |Y| / (100 k) \rfloor$. For each such sequence, we check if it is feasible, and if so then we check whether the inequalities
\begin{equation} \label{eq:surrogate-reg}
\left| \sum_{i=1}^k c_i u_i w_i \right| > (1 - \alpha/2)\epsilon u w, \quad
u \ge (1-\alpha/2)\epsilon |X|, \quad  \text{and }
w \ge (1-\alpha/2)\epsilon |Y|	
\end{equation}
hold. If they never hold for any feasible sequence, then we state that $G$ is $\epsilon$-regular. On the other hand, if they hold for some feasible sequence, then we can convert $f$ into actual sets $U$ and $W$ (as we shall explain) that witness that $G$ is not $(1-\alpha)\epsilon$-regular.

Next we prove the correctness of the algorithm if the output is that $G$ is $\epsilon$-regular.

Consider the partition of $X$ given by the common refinement by $S_1, \dots, S_k$. For any index set $I \subseteq [k]$, let $S_I = (\bigcap_{i \in I} S_i) \cap (\bigcap_{i \notin I} (X \setminus S_i))$ denote the part in the common refinement indexed by $I$. We can compute the sizes $|S_I|$ for all $I \subseteq [k]$ in $O(2^k n)$ time. With this information at hand:

\begin{lemma} \label{lem:feasible-LP}
There exists a $2^{O(k)}$ time algorithm that determines whether a given sequence $u$, $u_1, \dots, u_k$, $w$, $w_1, \dots, w_k$ is feasible.	
\end{lemma}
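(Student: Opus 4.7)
The plan is to recast feasibility as a small linear program and then invoke a polynomial-time LP algorithm. The key observation is that every constraint in the definition of feasibility only involves $\mu$ through the aggregate quantities $\mu(X)$, $\mu(Y)$, $\mu(S_i)$, $\mu(T_i)$, and these aggregates depend only on the total mass of $\mu$ on each atom of the common refinement.

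First I would set up notation. As in the excerpt, for $I \subseteq [k]$ let $S_I = \bigl(\bigcap_{i \in I} S_i\bigr) \cap \bigl(\bigcap_{i \notin I}(X \setminus S_i)\bigr)$ be the atoms of the common refinement of $X$ by $S_1,\dots,S_k$; define $T_J$ analogously for $Y$. Precompute $|S_I|$ and $|T_J|$ for all $I,J \subseteq [k]$; this is done in one $O(2^k n)$-time pass over $X \cup Y$ and is already available from the preprocessing step described just before the lemma.

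Next I would write down the LP. Introduce variables $m_I \in [0,|S_I|]$ for each $I \subseteq [k]$ and $n_J \in [0,|T_J|]$ for each $J \subseteq [k]$, thought of as $\mu(S_I)$ and $\mu(T_J)$. Set $\delta = \alpha\epsilon^3/(100k)$ and impose the linear constraints
\[
\Bigl|\sum_{I \subseteq [k]} m_I - u\Bigr| \le \delta |X|, \qquad \Bigl|\sum_{I \ni i} m_I - u_i\Bigr| \le \delta |X| \text{ for each } i \in [k],
\]
together with the symmetric constraints involving the $n_J$'s, $w$, and the $w_j$'s. The claim is that this LP is feasible iff the input sequence is feasible. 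One direction is immediate: if $\mu$ witnesses feasibility, set $m_I := \mu(S_I)$ and $n_J := \mu(T_J)$. For the converse, given an LP-feasible $(m_I, n_J)$, define $\mu$ on $x \in S_I$ by $\mu(x) := m_I/|S_I|$ (and analogously on each $T_J$); the box constraints $m_I \in [0,|S_I|]$ guarantee $\mu(x) \in [0,1]$, and every aggregate sum $\mu(S_i) = \sum_{I \ni i} m_I$ matches the LP variable by construction, so all feasibility constraints hold.

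Finally I would bound the running time. The LP decouples into two independent LPs, one over $X$ and one over $Y$, each with $2^k$ variables and $2(k+1)$ non-box linear constraints, all of whose coefficients are $0$, $1$, or bounded integers of size $\le n$. Any polynomial-time LP algorithm (for instance an interior-point method) solves such an LP in time polynomial in the number of variables, the number of constraints, and the bit-complexity of the entries, which totals $2^{O(k)} \cdot \mathrm{polylog}(n)$; absorbing the polylog into the outer $n^2$ factor of Theorem~\ref{thm:pair} yields the claimed $2^{O(k)}$ bound. The main (mild) subtlety will be to confirm that this LP reduction genuinely preserves feasibility in both directions, which the uniform-weight construction above handles cleanly; the rest is bookkeeping.
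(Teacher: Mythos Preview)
Your proposal is correct and follows essentially the same approach as the paper: reduce feasibility to a linear program in the $2^k$ aggregate variables $\mu(S_I)$ (and analogously for $Y$), with box constraints $0 \le x_I \le |S_I|$ and the $O(k)$ linear constraints coming from $u, u_1, \dots, u_k$, then solve in $2^{O(k)}$ time. Your write-up is in fact slightly more explicit than the paper's, since you spell out both directions of the equivalence (in particular the uniform-weight construction $\mu(x) = m_I/|S_I|$ on $S_I$) and note the decoupling into two independent LPs; the paper simply asserts the equivalence and says the resulting LP can be solved in $2^{O(k)}$ time without discussing bit complexity.
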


\begin{proof}
It suffices to show that one can determine in the required time whether there exists $\mu \colon X \to [0,1]$ such that $|\mu(X) -u| \le a$ and $|\mu(S_i) - u_i| \le a_i$, for each $i$. Here $a = a_i = \lfloor \alpha\epsilon^3n/(100k)\rfloor$ is the required bound (though it could be chosen arbitrarily for the purpose of this lemma). The situation for $Y$ is analogous.

For the purpose of satisfying the inequalities $|\mu(X) -u| \le a$ and $|\mu(S_i) - u_i| \le a_i$, one only needs to know the sum of values of $\mu$ on parts in the partition of $X$ induced by the common refinement of $S_1, \dots, S_k$.

For each $I \subseteq [k]$, the variable $x_I$ is supposed to correspond to the value of $\mu(S_I)$. Then $\mu$ exists if and only if there exists $(x_I)_{I \subseteq [k]} \in \RR^{2^k}$ satisfying the following inequalities:
\begin{align*}
	-a \le \left(\sum_{I \subseteq [k]} x_I\right) - u \le a, & \\
	-a_i \le \left(\sum_{I \ni i} x_I \right) - u_i \le a_i &\qquad \text{ for all } i \in [k], \\
	\text{and}\qquad 
	0 \le x_I \le |S_I| &\qquad \text{ for all } I \subseteq [k].
\end{align*}
This is a linear program in $2^k + 1$ variables, which can be solved in $2^{O(k)}$ time. The original sequence is feasible if and only if the above system of linear inequalities has some solution in $(x_I)$.
\end{proof}

Suppose the algorithm does not find any feasible sequence satisfying \eqref{eq:surrogate-reg}. We claim that $G$ is $\epsilon$-regular. Assume otherwise. Then there exist $U \subseteq X$ and $W \subseteq Y$ such that $|U| \ge \epsilon|X|$, $|W| \ge \epsilon|Y|$, and $|d(U,W) - d(X,Y)| > \epsilon$. Since 
$\dd_\square(G, G') \le \alpha\epsilon^3/4$, we have 
$|e_G(U,W) - e_{G'}(U,W)| \le (\alpha \epsilon^3/4) |X||Y|$. Thus
\begin{align*}
|e_{G'}(U,W) - d_G(X,Y)|U||W||
&\ge |e_G(U,W) - d_G(X,Y)|U||W|| - |e_G(U,W) - e_{G'}(U,W)| \\
&\ge |d_G(U,W) - d_G(X,Y)||U||W| - \tfrac14 \alpha \epsilon^3 |X||Y| \\
&\ge \epsilon |U||W| - \tfrac14 \alpha \epsilon |U||W| \\
&\ge (1 - \tfrac14\alpha) \epsilon|U||W|.
\end{align*}
On the other hand, since $G' = d_G(X,Y) + c_1 K_{S_1, T_1} + \dots + c_k K_{S_k,T_k}$, we have
\[
e_{G'}(U,W) - d_G(X,Y)|U||W|
= \sum_{i=1}^k c_i |U \cap S_i| |W \cap T_i|.
\]
So
\[
\left|\sum_{i=1}^k c_i |U \cap S_i| |W \cap T_i|\right| \ge (1 - \tfrac14\alpha) \epsilon|U||W|.
\]
Let $u$ and $u_i$ be $|U|$ and $|U \cap S_i|$, each respectively rounded to the nearest integer multiple of $\lfloor \alpha \epsilon^3 |X| /(100k)\rfloor$, for all $1 \le i \le k$. Similarly let $w, w_i$ be $|W|$ and $|W \cap S_i|$, each respectively rounded to the nearest integer multiple of $\lfloor \alpha \epsilon^3 |Y| /(100k)\rfloor$, for all $1 \le i \le k$. The sequence $u, u_1, \dots, u_k, w, w_1, \dots, w_k$ is feasible as witnessed by $\mu = 1_{U \cup W}$. We claim that \eqref{eq:surrogate-reg} holds. Indeed, we have
\[
u \ge |U| - \tfrac{1}{100} \alpha\epsilon^3 |X| \ge (1 - \tfrac{1}{100} \alpha\epsilon^2)\epsilon |X|,
\]
and
\[
w \ge |W| - \tfrac{1}{100} \alpha\epsilon^3 |Y| 
\ge (1 - \tfrac{1}{100} \alpha\epsilon^2)\epsilon |Y|.
\]
Furthermore, we have
\begin{align*}
\left| \sum_{i=1}^k c_i u_i w_i \right|
&\ge \left|\sum_{i=1}^k c_i |U \cap S_i| |W \cap T_i|\right| - \tfrac3{100} \alpha\epsilon^3 |X||Y|
\\
&\ge (1 - \tfrac14\alpha) \epsilon|U||W| - \tfrac3{100} \alpha\epsilon^3 |X||Y|
\\
&\ge (1 - \tfrac14\alpha - \tfrac3{100} \alpha) \epsilon|U||W|
\\
&\ge (1 - \tfrac14 \alpha - \tfrac{3}{100}\alpha) \epsilon ( 1 + \tfrac{1}{100} \alpha\epsilon^2)^{-2} uw
\\
&> (1 - \tfrac12 \alpha) \epsilon uw.
\end{align*}
The first inequality above follows from the fact that for each $i$, \[|u_i-|U \cap S_i|| \le \frac{\alpha \epsilon^3 |X|}{100k},\] \[|u_i-|U \cap S_i|| \le \frac{\alpha \epsilon^3 |Y|}{100k},\] and thus \[\left|u_iw_i-|U \cap S_i||W \cap T_i|\right| \le \frac{3\alpha\epsilon^3 |X||Y|}{100k}.\] The penultimate inequality follows from $u \le |U| + \tfrac1{100} \alpha \epsilon^3 |X| \le (1 + \tfrac1{100} \alpha\epsilon^2)|U|$ and similarly with $w$.
So we have a feasible sequence satisfying \eqref{eq:surrogate-reg}, which is a contradiction.

\medskip

Now suppose instead that the algorithm does find some feasible sequence that satisfies \eqref{eq:surrogate-reg}. By adjusting $\mu$, we may assume that $\mu$ takes $\{0,1\}$-value on all but at most one element in each part in the common refinement partition of $X$ by $S_1, \dots, S_k$, and likewise in $Y$ by $T_1, \dots, T_k$. Let $U \subseteq X$ and $W \subseteq Y$ denote the elements where $\mu$ is positive, we have 
\[
	||U| - u| \le \frac{\alpha\epsilon^3}{100k}|X| + 2^k \le \frac{\alpha\epsilon^3}{50k}|X|
\]
Here the extra $2^k$ term account for rounding up non-integral values of $\mu$. We used the assumption \eqref{eq:k-small} to bound $2^k$.
It thus follows from above, and \eqref{eq:surrogate-reg} that
\begin{align*}
|U| &\ge (1 - \tfrac12 \alpha - \tfrac{1}{50k}\alpha\epsilon^2)\epsilon |X| \ge (1 - \alpha)\epsilon|X|.
\end{align*}
In particular, this means that
\[
	||U| - u| \le \frac{\alpha\epsilon^3}{50k}|X| \le \frac{\alpha\epsilon^2}{(1-\alpha)50k}|U|.
\]
Similarly, we have
\[|W| \ge (1 - \tfrac12 \alpha - \tfrac{1}{50k}\alpha\epsilon^2)\epsilon |Y| \ge (1 - \alpha)\epsilon|Y|,\]
and we have 
\[
	||U \cap S_i| - u_i| \le \frac{\alpha\epsilon^2}{(1-\alpha)50k}|U|,
	\quad \text{for all }1 \le i \le k,
\]
and
\[
	||W| - w|\le \frac{\alpha\epsilon^2}{(1-\alpha)50k}|W|, 
	\quad \text{ and } \quad
	||W \cap T_i| - w_i| 
	\le \frac{\alpha\epsilon^2}{(1-\alpha)50k}|W|,
	\quad \text{for all }1 \le i \le k.
\]
and
\begin{align*}
|d_G(U,W) - d_G(X,Y)|
&\ge |d_{G'}(U,W) - d_G(X,Y)| - |d_{G}(U,W) - d_{G'}(U,W)|
\\
&\ge \frac{1}{|U||W|}\left|\sum_{i=1}^k c_i |U \cap S_i||W \cap T_i| \right| 
- \frac{|X||Y|}{|U||W|}\dd_\square(G,G')
\\
&\ge \frac{1}{|U||W|}\left(\left|\sum_{i=1}^k c_i u_i w_i\right| - \frac{3}{(1-\alpha)50}\alpha\epsilon^2|U||W| )\right) - \frac{1}{4}\alpha\epsilon^3 \frac{|X||Y|}{|U||W|}
\\
&\ge \frac{1}{|U||W|}\left(\left(1-\frac{\alpha}{2}\right)\epsilon u w - \frac{3}{(1-\alpha)50}\alpha\epsilon^2 |U||W| )\right) - \frac{1}{4}\alpha\epsilon
\\&
\ge (1-\alpha)\epsilon.
\end{align*}
Hence the pair $(U,W)$ witnesses that $G$ is not $(1-\alpha)\epsilon$-regular.

\medskip

We will need the following easy corollary of Theorem~\ref{thm:pair} for the next section.

\begin{corollary} \label{cor:check-partition-regular}
There exists an $O_{\epsilon,\alpha,k}(n^2)$ time algorithm, which, given $\epsilon,\alpha,k>0$, a graph $G$ on $n$ vertices, and a partition $\cP$ of the vertex set of $G$ into $k$ parts, does one of the following:
\begin{enumerate}
\item Correctly states that $\cP$ is $(1+\alpha)\epsilon$-regular;
\item Correctly states that $\cP$ is not $\epsilon$-regular.
\end{enumerate}
\end{corollary}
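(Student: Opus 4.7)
The plan is to apply Theorem~\ref{thm:pair} to each of the $k^2$ ordered pairs of parts $(V_i,V_j)$ from $\cP$ individually, and then threshold the number of pairs on which a witness of irregularity is returned.

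For each pair $(V_i,V_j)$, I would invoke Theorem~\ref{thm:pair} on the induced bipartite graph with parameters $\epsilon^\ast := (1+\alpha)\epsilon$ and $\alpha^\ast := \alpha/(1+\alpha)$, chosen precisely so that $(1-\alpha^\ast)\epsilon^\ast = \epsilon$. The output then either certifies that $(V_i,V_j)$ is $(1+\alpha)\epsilon$-regular, or it produces subsets $U \subseteq V_i$, $W \subseteq V_j$ witnessing that $(V_i,V_j)$ is not $\epsilon$-regular. Let $N$ be the number of pairs for which Theorem~\ref{thm:pair} returns the latter type of output.

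If $N \le \epsilon k^2$, output (1): every pair not in the witnessed set is certified $(1+\alpha)\epsilon$-regular by Theorem~\ref{thm:pair}, so at most $N \le \epsilon k^2 \le (1+\alpha)\epsilon k^2$ pairs can fail to be $(1+\alpha)\epsilon$-regular, which is exactly the definition of $\cP$ being $(1+\alpha)\epsilon$-regular. If instead $N > \epsilon k^2$, output (2): each of the $N$ witnessed pairs is genuinely not $\epsilon$-regular (by the correctness guarantee of Theorem~\ref{thm:pair}), so $\cP$ has strictly more than $\epsilon k^2$ non-$\epsilon$-regular pairs and is therefore not $\epsilon$-regular.

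Each invocation of Theorem~\ref{thm:pair} runs in time $O_{\epsilon^\ast,\alpha^\ast}(n^2) = O_{\epsilon,\alpha}(n^2)$, and we perform $k^2$ such invocations, giving the claimed total of $O_{\epsilon,\alpha,k}(n^2)$. There is no real obstacle here beyond getting the two parameters right: $\epsilon^\ast$ has to equal $(1+\alpha)\epsilon$ so that a positive pair-test certifies the correct regularity level for output (1), and $\alpha^\ast$ has to be picked so that a witness from Theorem~\ref{thm:pair} refutes $\epsilon$-regularity (not merely $(1-\alpha^\ast)\epsilon^\ast$-regularity with some other value). Once these are aligned, the counting/thresholding step is an immediate consequence of the definition of an $\epsilon$-regular partition.
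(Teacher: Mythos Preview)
Your proof is correct and follows essentially the same approach as the paper: apply Theorem~\ref{thm:pair} to every pair of parts, count how many pairs return a witness, and threshold at $\epsilon k^2$. You are somewhat more explicit than the paper in spelling out the parameter choice $\epsilon^\ast = (1+\alpha)\epsilon$, $\alpha^\ast = \alpha/(1+\alpha)$ needed to make the two outcomes of Theorem~\ref{thm:pair} line up exactly with ``$(1+\alpha)\epsilon$-regular'' and ``not $\epsilon$-regular'', which is a nice touch.
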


Note that sometimes both options are correct. The algorithm that we give runs in $O(k^2 2^{2^{(\alpha\epsilon)^{-O(1)}}}n^2)$ time.

\begin{proof}
Let $\cP$ be the partition of $V$ into $V_1, \dots, V_k$. Apply the algorithm in Theorem~\ref{thm:pair} to each pair $V_i, V_j$ so that it either correctly states that $(V_i, V_j)$ is $(1+\alpha)\epsilon$-regular or that it is not $\epsilon$-regular. If at least a $(1-\epsilon)$-fraction of pairs are seen to be $(1+\alpha)\epsilon$-regular, then we know that $\cP$ is $(1+\alpha)\epsilon$-regular, otherwise, more than an $\epsilon$-fraction of pairs fail to be $\epsilon$-regular, so that $\cP$ is not $\epsilon$-regular.
\end{proof}

\section{Approximating regularity}

In this section, we prove Theorem~\ref{thm:partition}, reproduced below for convenience.

\begin{theorem*}
There exists an $O_{\epsilon,\alpha, k}(n^2)$ time algorithm, which, given $0<\epsilon,\alpha<1$ and $k$, and a graph $G$ on $n$ vertices that admits an equitable $\epsilon$-regular partition with $k$ parts, outputs an equitable $(1+\alpha)\epsilon$-regular partition of $G$ into $k$ parts.
\end{theorem*}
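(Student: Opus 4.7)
The plan is to use the Frieze--Kannan weak regularity lemma to discretize the space of candidate $k$-partitions down to a finite set of ``profiles'' (depending only on $\epsilon,\alpha,k$), and then test each candidate for regularity using Corollary~\ref{cor:check-partition-regular}.

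\textbf{Setup.} Apply Corollary~\ref{cor:reg-sum} to $G$ with parameter $\delta = c\alpha\epsilon^3/k^2$ for a small absolute constant $c>0$, producing in time $O_{\epsilon,\alpha,k}(n^2)$ subsets $S_1,T_1,\dots,S_{k'},T_{k'}\subseteq V(G)$ and reals $c_1,\dots,c_{k'}$, where $k'\le \delta^{-O(1)}$, such that the weighted graph $G' := d(G) + \sum_{\ell} c_\ell K_{S_\ell,T_\ell}$ satisfies $\dd_\square(G,G')\le \delta$. Let $A_1,\dots,A_K$ (with $K\le 2^{2k'}$) be the atoms of the common refinement of the $S_\ell$ and $T_\ell$. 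On each block $A_J\times A_{J'}$, the weighted graph $G'$ takes a constant value $G'_{J,J'}$; hence for any $U,W\subseteq V(G)$, $e_{G'}(U,W) = \sum_{J,J'} G'_{J,J'}\, |U\cap A_J|\,|W\cap A_{J'}|$ depends only on the ``sub-profile'' $(|U\cap A_J|,|W\cap A_{J'}|)$.

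\textbf{Enumeration.} Set $\eta := \delta/K$, and enumerate all ``profile matrices'' $\pi=(\pi_{i,J})_{i\in[k],J\in[K]}$ whose entries are non-negative multiples of $\lfloor \eta n\rfloor$ subject to $\sum_i \pi_{i,J} \approx |A_J|$ and $\sum_J \pi_{i,J}\in\{\lfloor n/k\rfloor,\lceil n/k\rceil\}$ up to slack $O(kK\eta n)$. There are at most $(1/\eta)^{kK}$ such profiles, a constant depending only on $\epsilon,\alpha,k$. For each profile $\pi$, build an equitable partition $\cP_\pi = \{V_1^\pi,\dots,V_k^\pi\}$ by arbitrarily splitting each atom $A_J$ into pieces of sizes $\pi_{1,J},\dots,\pi_{k,J}$ and making $O(kK\eta n)$ rebalancing swaps to restore exact equitability; this takes $O(n)$ time. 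Then run Corollary~\ref{cor:check-partition-regular} on $\cP_\pi$ with parameters $(1+\alpha/2)\epsilon$ and $\alpha/3$, and output $\cP_\pi$ if it is certified $(1+\alpha)\epsilon$-regular.

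\textbf{Correctness.} Let $\cP=\{V_1,\dots,V_k\}$ be the promised $\epsilon$-regular partition and $\pi^*_{i,J}:=|V_i\cap A_J|$ its profile; some enumerated $\tilde\pi$ satisfies $|\tilde\pi_{i,J}-\pi^*_{i,J}|\le \eta n$ entry-wise. The crucial point is that the $G'$-regularity of any partition is determined by its profile alone (because $G'$ is constant on each block $A_J\times A_{J'}$), and is continuous in the profile. Since $\dd_\square(G,G')\le \delta$, the $\epsilon$-regularity of $\cP$ with respect to $G$ transfers to $(\epsilon+O(\delta k^2/\epsilon^2))$-regularity with respect to $G'$; so every partition sharing the profile $\pi^*$ is equally $G'$-regular, and by the continuity argument every partition with profile $\tilde\pi$ is $G'$-$(\epsilon+O(\delta k^2/\epsilon^2)+O(\eta K\cdot \mathrm{poly}(k)/\epsilon^{O(1)}))$-regular, which (via $\dd_\square(G,G')\le \delta$ again) translates to the same bound with respect to $G$. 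With our choices of $\delta$ and $\eta$, this bound is below $(1+\alpha/2)\epsilon$, so Corollary~\ref{cor:check-partition-regular} must certify $\cP_{\tilde\pi}$ as $(1+\alpha)\epsilon$-regular. Hence the algorithm succeeds. The total running time is $O_{\epsilon,\alpha,k}(n^2)$ since preprocessing and each of the constantly-many per-profile tests cost $O_{\epsilon,\alpha,k}(n^2)$.

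\textbf{Main obstacle.} The heart of the argument is the quantitative stability lemma: one must show that (i) $G'$-regularity depends on the profile in a Lipschitz manner, and (ii) this combines cleanly with the cut-norm bound $\dd_\square(G,G')\le \delta$ to give robustness of $G$-regularity under profile rounding. The bookkeeping of the small parameters $\delta,\eta,K$ is delicate because $K=2^{\delta^{-O(1)}}$ is doubly-exponential in $1/\delta$, yet one still needs $\eta K=O(\delta)$ so that the profile-perturbation error stays within the budget, while the total number of grid profiles $(1/\eta)^{kK}$ remains a finite (if enormous) constant in $n$.
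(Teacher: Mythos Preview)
Your proposal is correct and follows essentially the same approach as the paper: use Corollary~\ref{cor:reg-sum} to get a cut-norm approximation $G'$, form the common refinement $\cQ$ of the $S_\ell,T_\ell$, enumerate a bounded grid of ``profiles'' $(|V_i\cap A_J|)$, realize each profile as an (almost) equitable partition, and test it with Corollary~\ref{cor:check-partition-regular}. The paper's ``stability lemma'' that you flag as the main obstacle is exactly what is proved there via a short sequence of lemmas showing that modifying each part by a small fraction of vertices preserves $\epsilon$-regularity up to a small additive loss; the only cosmetic difference is that the paper first finds an almost-equitable $(1+3\alpha/4)\epsilon$-regular partition and rebalances at the end, whereas you rebalance per profile before testing.
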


Here is the algorithm, which runs in $O(2^{2^{(k/(\alpha\epsilon))^{O(1)}}}n^2)$ time. Using Corollary~\ref{cor:reg-sum}, we find $S_1, \dots, S_s,$ $T_1, \dots, T_s \subseteq V$, with $s \le (k/\alpha\epsilon)^{O(1)}$, such that $\dd_\square(G,G') \le \alpha\epsilon/(10k^2)$, where
\[
G' = d(G) + c_1 K_{S_1, T_1} + \dots c_k K_{S_s,T_s}.
\]
Let $\cQ$ denote the partition of $V(G)$ given by the common refinement of the sets $S_1, \dots, S_s, T_1, \dots, T_s$. Let $\cQ$ have $r \le 4^s$ parts, with sizes $q_1, \dots, q_r$. We shall search over all tuples $(q_{i,j})_{1 \le i \le r, 1 \le j \le k}$ of nonnegative integers satisfying all of the following requirements:
\begin{itemize}
	\item $q_i = q_{i,1} + \dots q_{i,k}$ for each $1 \le i \le r$;
	\item each $q_{i,j}$ with $j < k$ is divisible by $\lfloor \alpha\epsilon n/(25rk) \rfloor$ (no divisibility requirements for $q_{i,k}$); and
	\item the sums $\sum_{i=1}^r q_{i,j}$ for different values of $j$ differ from $n/k$ by at most $\alpha\epsilon n/(50k)$.
\end{itemize}
For each eligible tuple $(q_{i,j})$, consider a partition $\cP: V = V_1 \cup \dots \cup V_k$ where $Q_i \cap V_j = q_{i,j}$ (there are many such partitions; pick an arbitrary one). Apply Corollary~\ref{cor:check-partition-regular} to certify that either $\cP$ is $(1+3\alpha/4)\epsilon$-regular or not $(1+\alpha/2)\epsilon$-regular. It turns out that the latter option cannot always be true for all $\cP$ searched, as we assume that $G$ admits some $\epsilon$-regular partition with $k$ parts (we will justify this claim). From this search, we find a $(1+3\alpha/4)\epsilon$-regular partition $\cP$ which is almost equitable in the sense that its parts have sizes differing from $n/k$ by at most $\alpha\epsilon n/(50k)$. We modify $\cP$ by moving a minimum number of vertices to make it equitable. We claim that the resulting partition is $(1+\alpha)\epsilon$-regular.

\medskip

We next analyze the running time of this algorithm. Corollary~\ref{cor:reg-sum} takes  $O(2^{2^{(k/(\alpha\epsilon))^{O(1)}}}n^2)$ time to find the cut norm decomposition. The number of tuples $(q_{i,j})$ is at most $(25rk\alpha^{-1}\epsilon^{-1})^{kr} \le 2^{2^{(k/(\alpha\epsilon))^{O(1)}}}$. For each $(q_{i,j})$, the algorithm in Corollary~\ref{cor:check-partition-regular} takes $O(k^2 2^{2^{(\alpha\epsilon)^{-O(1)}}}n^2)$ time. Therefore, the entire algorithm takes $O(2^{2^{(k/(\alpha\epsilon))^{O(1)}}}n^2)= O_{\alpha,\epsilon,k}(n^2)$ time.

Now we verify correctness. We shall prove the following claims, which together imply the result. Indeed, (1) shows that the algorithm always finds some $(1+3\alpha/4)\epsilon$-regular partition $\cP$, and (2) shows that making $\cP$ equitable by moving a minimum number of vertices between parts results in a $(1+\alpha)\epsilon$-regular partition.
\begin{enumerate}
	\item If a partition $\cP=\{V_1,V_2,...,V_k\}$ of $V$ is $\epsilon$-regular, then we can modify it slightly to obtain $\cP'=\{V_1',V_2',...,V_k'\}$ such that $q_{i,j}=|Q_i \cap V_j'|$ form an eligible tuple, and $\cP'$ is $(1+\alpha/2)\epsilon$-regular for $G$ (so the search would not pass over this $(q_{i,j})$).
	\item If a partition $\cP$ of $V$ is $(1+3\alpha/4)\epsilon$-regular for $G$, then by modifying $\cP$ by adding or deleting at most $\alpha\epsilon n/(50k)$ vertices from each part, the resulting partition is $(1+\alpha)\epsilon$-regular.
\end{enumerate}

In order to show these claims, we first establish a few simple lemmas.

\begin{lemma}\label{prevlem1}
Let $X$, $X'$, $Y$ be vertex subsets of a graph with $X \subset X'$ and $|X| \geq (1-\delta)|X'|$. Then $|d(X',Y)-d(X,Y)| \leq \delta$.  
\end{lemma}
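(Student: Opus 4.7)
The plan is to express $d(X',Y)$ as a convex combination of $d(X,Y)$ and $d(X'\setminus X, Y)$, then bound the deviation using the fact that densities lie in $[0,1]$.

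Concretely, I would first write $X' = X \cup (X' \setminus X)$ as a disjoint union, so that $e(X',Y) = e(X,Y) + e(X'\setminus X, Y)$. Dividing both sides by $|X'||Y|$ yields the identity
\[
d(X',Y) = \frac{|X|}{|X'|} d(X,Y) + \frac{|X' \setminus X|}{|X'|} d(X' \setminus X, Y),
\]
which exhibits $d(X',Y)$ as a convex combination of $d(X,Y)$ and $d(X'\setminus X, Y)$.

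Subtracting $d(X,Y)$ from both sides gives
\[
d(X',Y) - d(X,Y) = \frac{|X' \setminus X|}{|X'|}\bigl(d(X'\setminus X, Y) - d(X,Y)\bigr).
\]
Since edge densities lie in $[0,1]$, the factor $d(X'\setminus X, Y) - d(X,Y)$ has absolute value at most $1$. Using $|X'\setminus X| = |X'| - |X| \le \delta |X'|$ by hypothesis, we obtain $|d(X',Y) - d(X,Y)| \le |X'\setminus X|/|X'| \le \delta$, as required.

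There is no real obstacle here: the lemma is a short, self-contained computation that reduces to the observation that averaging a quantity with something else in $[0,1]$ perturbs it by at most the weight of the new contribution. The only thing to be a little careful about is that the identity above assumes $|X| > 0$ (so that $d(X,Y)$ is defined); the case $X = \emptyset$ forces $|X'| \le \delta |X'|/(1-\delta)$ style triviality or can be handled separately, and in any case the statement is vacuous or immediate when $X$ is empty.
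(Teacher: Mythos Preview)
Your proof is correct and essentially identical to the paper's: both derive the identity $d(X',Y)-d(X,Y) = \frac{|X'\setminus X|}{|X'|}\bigl(d(X'\setminus X,Y)-d(X,Y)\bigr)$ and then bound it using $|X'\setminus X|\le \delta|X'|$ and the fact that densities lie in $[0,1]$.
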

\begin{proof}
We have the identity $$d(X',Y)-d(X,Y)=\frac{e(X' \setminus X,Y)+e(X,Y)}{|X' \setminus X||Y| +|X||Y|}-\frac{e(X,Y)}{|X||Y|}=\left(d(X' \setminus X,Y)-d(X,Y)\right)\frac{|X' \setminus X|}{|X'|}.$$ The lemma follows from noting that densitites are between $0$ and $1$ and $|X' \setminus X| \leq \delta |X'|$.
\end{proof}

Recall that $A \Delta B := (A \setminus B) \cup (B \setminus A)$ denotes the symmetric difference between $A$ and $B$.

\begin{lemma}\label{prevlem2}
If $U$, $U'$, $W$, $W'$ are vertex subsets of a graph with $|U \Delta U'| \leq \delta |U \cup U'|$ and $|W \Delta W'| \leq \delta |W \cup W'|$, then $|d(U,W)-d(U',W')| \leq 2 \delta$. 
\end{lemma}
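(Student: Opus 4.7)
The strategy is a two-step chaining argument: compare $d(U,W)$ to $d(U',W')$ by first changing $U$ to $U'$ (with $W$ fixed), and then changing $W$ to $W'$ (with $U'$ fixed). The hypothesis $|U \Delta U'| \le \delta|U \cup U'|$ immediately gives $|U|, |U'| \ge (1-\delta)|U \cup U'|$, since $|U \cup U'| - |U| = |U' \setminus U| \le |U \Delta U'|$, and similarly for $|U'|$; so both $U$ and $U'$ qualify as the smaller set in an application of Lemma~\ref{prevlem1}.

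For the first step, I would pass through the intermediate pair $d(U \cup U', W)$, applying Lemma~\ref{prevlem1} once to each of the inclusions $U \subseteq U \cup U'$ and $U' \subseteq U \cup U'$. The key point is that one should use the slightly sharper bound that falls out of the proof of Lemma~\ref{prevlem1} — namely, $|d(X',Y) - d(X,Y)| \le |X' \setminus X|/|X'|$ whenever $X \subseteq X'$ — rather than invoking Lemma~\ref{prevlem1} as a black box with the single parameter $\delta$. Summing the two estimates gives
\[
|d(U,W) - d(U',W)| \le \frac{|U' \setminus U|}{|U \cup U'|} + \frac{|U \setminus U'|}{|U \cup U'|} = \frac{|U \Delta U'|}{|U \cup U'|} \le \delta.
\]
Applying exactly the same reasoning with $U'$ held fixed and $W, W'$ varying produces $|d(U',W) - d(U',W')| \le \delta$, and the triangle inequality combines the two estimates to yield the desired bound $|d(U,W) - d(U',W')| \le 2\delta$.

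There is no real obstacle here; the argument is routine chaining. The only subtlety worth flagging is that one must extract the sharper constant from the proof of Lemma~\ref{prevlem1} — treating it as a black box would cost $\delta$ per application, giving $2\delta$ per coordinate change and a weaker final bound of $4\delta$, which would still be adequate for the intended downstream use in the next section but is worse than what is claimed.
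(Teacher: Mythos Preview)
Your proof is correct and essentially identical to the paper's own argument: reduce to the case $W=W'$ by chaining, pass through the intermediate $d(U\cup U',W)$, and apply Lemma~\ref{prevlem1} with the sharper constants $\delta_1 = |U'\setminus U|/|U\cup U'|$ and $\delta_2 = |U\setminus U'|/|U\cup U'|$ so that $\delta_1+\delta_2 = |U\Delta U'|/|U\cup U'|\le \delta$. Your observation that using Lemma~\ref{prevlem1} as a black box would only yield $4\delta$ is exactly the point.
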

\begin{proof}
It suffices to prove the lemma in the case $W=W'$ and with the bound $2\delta$ replaced by $\delta$. Indeed, the lemma would then follow by applying this case twice and the triangle inequality. By the triangle inequality and applying Lemma \ref{prevlem1} twice with $X'=U \cup U'$, first with $\delta_1=\frac{|U \cup U'| - |U|}{|U \cup U'|}$ and then with $\delta_2=\frac{|U \cup U'| - |U'|}{|U \cup U'|}$, and finally using $\delta_1+\delta_2 = \frac{|U \Delta U'|}{|U \cup U'|} \leq \delta$, we have 
$$|d(U,W)-d(U',W)|\leq |d(U,W)-d(U \cup U',W)|+|d(U \cup U',W)-d(U',W)| \leq \delta_1+\delta_2 \leq \delta.$$
\end{proof}

\begin{lemma}
Suppose $(V_1,V_2)$ is an $\epsilon$-regular pair of vertex subsets of a graph. Suppose we modify them slightly to $V_1'$ and $V_2'$, with $|V_i \Delta V_i'| \le \delta \epsilon |V_i|$ for $i=1,2$. Then $V_1'$ and $V_2'$ are $\epsilon+4\delta$-regular.
\end{lemma}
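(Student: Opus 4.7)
The plan is to reduce to the $\epsilon$-regularity of the original pair $(V_1, V_2)$ by pulling any test subsets $U' \subseteq V_1'$ and $W' \subseteq V_2'$ back to their restrictions $U := U' \cap V_1$ and $W := W' \cap V_2$, then comparing $d(U', W')$ with $d(V_1', V_2')$ through the intermediate pair $(U, W)$.

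First I would dispose of the trivial case $\epsilon + 4\delta \ge 1$: the size hypothesis $|U'| \ge (\epsilon + 4\delta)|V_1'| \ge |V_1'|$ forces $U' = V_1'$, and similarly $W' = V_2'$, making the conclusion immediate. Assume henceforth that $\epsilon + 4\delta < 1$. Given eligible $U', W'$, define $U, W$ as above. Using the easy consequences $|V_i'| \ge (1-\delta\epsilon)|V_i|$ and $|U' \setminus U| \le |V_1' \setminus V_1| \le \delta\epsilon |V_1|$ (and symmetrically for $W$), a short calculation shows $|U| \ge \epsilon|V_1|$ and $|W| \ge \epsilon|V_2|$; the restriction $\epsilon + 4\delta < 1$ is exactly what is needed to close this estimate. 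The $\epsilon$-regularity of $(V_1, V_2)$ then gives $|d(U, W) - d(V_1, V_2)| < \epsilon$.

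Next I would invoke Lemma~\ref{prevlem2} twice. Applied to $(V_1, V_1')$ and $(V_2, V_2')$, it yields $|d(V_1, V_2) - d(V_1', V_2')| \le 2\delta\epsilon$. Applied to $(U, U')$ and $(W, W')$, whose symmetric-difference-to-union ratios are each at most $\delta\epsilon/((\epsilon + 4\delta)(1 - \delta\epsilon))$, it yields the corresponding bound on $|d(U, W) - d(U', W')|$. The triangle inequality then controls $|d(U', W') - d(V_1', V_2')|$ by $\epsilon + 2\delta\epsilon + 2\delta\epsilon/((\epsilon + 4\delta)(1 - \delta\epsilon))$.

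The main obstacle is verifying that this upper bound does not exceed $\epsilon + 4\delta$. That amounts to an elementary but delicate inequality, which simplifies to $\epsilon + \epsilon/((\epsilon + 4\delta)(1 - \delta\epsilon)) \le 2$; I would prove it using only the hypothesis $\epsilon + 4\delta < 1$ together with the observation that $\epsilon(2-\epsilon) \le 1$ on $[0,1]$. The analysis is essentially tight, which explains why the coefficient of $\delta$ in the conclusion is as large as $4$.
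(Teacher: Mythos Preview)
Your proof is correct and follows essentially the same route as the paper: restrict the test sets to $U = U'\cap V_1$, $W = W'\cap V_2$, invoke $\epsilon$-regularity of $(V_1,V_2)$, then apply Lemma~\ref{prevlem2} twice and the triangle inequality. The paper sidesteps your ``delicate inequality'' entirely by bounding the symmetric-difference ratio more cleanly: having already shown $|U|\ge \epsilon|V_1|$, one has $|U\Delta U'| \le \delta\epsilon|V_1| \le \delta|U| \le \delta|U\cup U'|$, so Lemma~\ref{prevlem2} gives $|d(U,W)-d(U',W')|\le 2\delta$ directly, and the final bound is simply $\epsilon + 2\delta + 2\delta\epsilon \le \epsilon + 4\delta$ --- so the analysis is not tight and the coefficient $4$ is just a convenience.
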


\begin{proof}

Clearly we may assume that $\epsilon+4\delta \le 1$. Let $U' \subseteq V'_1$ and $W' \subseteq V'_2$ with $|U'| \ge (\epsilon+4\delta)|V'_1|$ and $|W'| \ge (\epsilon+4\delta)|V'_2|$. Let $U = U' \cap V_1$ and $W = W' \cap V_2$. 
Then we have
\begin{multline*} |U|=|U'|-|U'\setminus V_1| \ge |U'|-|V_1' \setminus V_1| \ge (\epsilon+4\delta)|V_1'|-\delta \epsilon|V_1| \ge \\(\epsilon+4\delta)(|V_1|-\delta \epsilon |V_1|) - \delta \epsilon|V_1|= \epsilon|V_1|+4\delta |V_1|-(1+\epsilon + 4 \delta)\delta\epsilon|V_1| \ge \epsilon|V_1|. \end{multline*}
Similarly $|W| \ge \epsilon |V_2|$. Thus by the regularity of the pair $(V_1,V_2)$, we have 
\[|d(U,W) - d(V_1,V_2)| \le \epsilon.\] 
Now, we have that 
\[|U \Delta U'| \le \delta \epsilon |V_1| \le \delta|U| \le \delta |U \cup U'|,\]
 and similarly $|W \Delta W'| \le \delta |W \cup W'|$, and thus $|d(U,W)-d(U',W')|\le 2\delta$ by Lemma \ref{prevlem2}. Similarly $|d(V'_1,V'_2) - d(V_1,V_2)| \le 2\delta \epsilon \le 2 \delta$. By the triangle inequality, we have $|d(U',W') - d(V'_i,V'_j)|  \le \epsilon + 4\delta$, showing that $(V'_i,V'_j)$ is $(\epsilon + 4\delta)$-regular.
\end{proof}

As a corollary, we have the following:

\begin{corollary}
Let $0<\epsilon,\delta<1$. Let $G$ be a graph with $n$ vertices. Let $\cP$ be a partition of $V(G)$ into $k$ parts, with each part having size at least $n/(2k)$. Suppose that $\cP$ is $\epsilon$-regular for $G$. If we modify $\cP$ by adding or deleting at most $\delta \epsilon |V|/k$ vertices from each part of $\cP$, then the resulting partition is $(\epsilon + 8\delta)$-regular for $G$.
\end{corollary}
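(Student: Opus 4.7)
The plan is to apply the preceding lemma to each pair of parts and then count the irregular pairs. Write $\cP = \{V_1, \ldots, V_k\}$ and $\cP' = \{V_1', \ldots, V_k'\}$ for the modified partition, so that $|V_i \Delta V_i'| \le \delta \epsilon n / k$ for every $i$. The hypothesis $|V_i| \ge n/(2k)$ gives
\[
|V_i \Delta V_i'| \le \frac{\delta \epsilon n}{k} \le 2 \delta \epsilon |V_i|.
\]
This is the only place where the lower bound on part sizes enters, and it is what forces the factor of $2$ that will propagate through the calculation.

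For each pair $(V_i, V_j)$ that is $\epsilon$-regular in $\cP$, I would apply the preceding lemma with its parameter $\delta$ replaced by $2\delta$. Its conclusion is that $(V_i', V_j')$ is $(\epsilon + 4 \cdot 2 \delta) = (\epsilon + 8\delta)$-regular. For pairs that were not $\epsilon$-regular in $\cP$ no claim is made; I simply accept the pessimistic bound that they may fail to be $(\epsilon + 8\delta)$-regular in $\cP'$ as well.

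Finally I would count. Since $\cP$ is $\epsilon$-regular, at most $\epsilon k^2$ pairs of parts fail to be $\epsilon$-regular in $\cP$. By the previous paragraph, every other pair in $\cP'$ is $(\epsilon + 8\delta)$-regular, so at most $\epsilon k^2 \le (\epsilon + 8\delta) k^2$ pairs of parts in $\cP'$ fail to be $(\epsilon + 8\delta)$-regular. Hence $\cP'$ is $(\epsilon + 8\delta)$-regular, as desired. There is no substantive obstacle here beyond the bookkeeping of the factor of $2$ described above; the corollary is essentially a pair-by-pair application of the previous lemma together with the trivial bound on the number of irregular pairs.
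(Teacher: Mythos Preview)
Your proof is correct and follows exactly the same approach as the paper: bound $|V_i \Delta V_i'| \le 2\delta\epsilon|V_i|$ using the size hypothesis, apply the preceding lemma with its $\delta$ replaced by $2\delta$ to upgrade each $\epsilon$-regular pair to an $(\epsilon+8\delta)$-regular pair, and then observe that the number of irregular pairs is at most $\epsilon k^2 \le (\epsilon+8\delta)k^2$.
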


\begin{proof}
Indeed, for any part $V_i$ of $\cP$, if we let $V_i'$ be its modification, then $|V_i \Delta V_i'| \le \delta \epsilon |V|/k \le 2 \delta \epsilon |V_i|$. This means that if a pair $(V_i,V_j)$ was $\epsilon$-regular, then after the modification it is  $(\epsilon+8\delta)$-regular, and so the proportion of pairs that are not $\epsilon+8\delta$-regular is at most $\epsilon \le \epsilon+8\delta$.
\end{proof}

Now we prove claim (1) above. Let $\cP$ be an equitable $\epsilon$-regular partition of $G$. Since $\dd_\square(G,G') \le \alpha\epsilon/(10k^2)$, $\cP$ is $(1 + \alpha/10)\epsilon$-regular for $G'$. In $G'$, edges between the same parts of $\cQ$ have equal weights, and we can take $\cP'$ such that $|Q_i \cap V_j|$ differs from $|Q_i \cap V'_j|$ by at most $\alpha\epsilon n/(50rk)$ for each $i,j$. This means that $\cP'$ can be taken so that $V_j$ and $V_j'$ differ by at most $\alpha \epsilon n/(50k)$ for each $j$, so it follows from the lemma above that $\cP'$ is $(1 + 3\alpha/10)\epsilon$-regular for $G'$. Therefore, $\cP'$ must be $(1+\alpha/2)\epsilon$-regular for $G$. 

The claim (2) follows immediately from the corollary above.

\section{Permutation regularity lemma}
\label{sec:perm-reg} 

In this section we give a new proof of a regularity lemma for permutations that requires fewer parts than previous results in literature. To define regular partitions for permutations, it is natural to state it as a special case in a more general setting for matrices.

Let $Y = (y_{ij})$ be a $n \times n$ matrix. We use \emph{interval} to
mean a subset of $[n]$ of consecutive
integers. For any intervals $I, J$ of $[n]$, we write
\[
d_Y(I, J) := \frac{1}{|I||J|} \sum_{i \in I, j \in J} y_{ij}.
\]

\begin{definition}
  Let $Y$ be a $n \times n$ square matrix. Let $I, J
  \subseteq [n]$ be intervals. We say that $(I, J)$ is \emph{interval
    $\epsilon$-regular} for $Y$ if for all subintervals $A \subseteq I$
  and $B \subseteq J$ with $|A| \geq \epsilon |I|$ and $|B| \ge \epsilon
  |J|$ one has
  \[
  |d_Y(A, B) - d_Y(I, J)| \le \epsilon.
  \]
  Let $\cP$ be a partition of $[n]$ into $k$ intervals. We say that
  $\cP$ is interval \emph{$\epsilon$-regular} for $Y$ if all except at most
  $\epsilon k^2$ pairs of intervals $(I, J)$ of $\cP$ are
  interval $\epsilon$-regular for $Y$.
\end{definition}

\begin{definition}
  We say that $\cP$ is an \emph{equipartition} of $[n]$ if every
  pair of parts in $\cP$
  differ in size by at most one.
\end{definition}

Here is the regularity lemma for interval regular partitions.

\begin{theorem}[Interval regularity lemma] \label{thm:int-reg-matrix} For every $\epsilon > 0$
  and positive integer $m$ there is some $M = m^{O(1)}
  \epsilon^{-O(\epsilon^{-5})} $ with the
  following property. For every $n \in \NN$, and $n \times n$ matrix
  $Y = (y_{ij})$ with $[0,1]$-valued entries, there is some integer $k
  \in [m, M]$ so that every equipartition of $[n]$ into $k$
  intervals is interval $\epsilon$-regular for $Y$.
\end{theorem}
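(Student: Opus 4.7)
The plan is to combine an interval-preserving iterative refinement argument with a short perturbation step that upgrades ``some equipartition into $k$ intervals is $(\epsilon/2)$-regular'' to ``every equipartition into $k$ intervals is $\epsilon$-regular''. Throughout I will use the mean-square-density energy $q(\cP) := \sum_{I,J \in \cP} (|I||J|/n^2)\, d_Y(I,J)^2 \in [0,1]$, which is non-decreasing under refinement.

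Set $t := \lceil 4/\epsilon \rceil$ and define the sequence $\cP_0, \cP_1, \ldots$, where $\cP_0$ is the canonical equipartition of $[n]$ into $m$ intervals and $\cP_{i+1}$ is obtained from $\cP_i$ by splitting each interval into $t$ roughly equal subintervals; thus $\cP_i$ is an equipartition into $k_i := m t^i$ intervals. The key refinement claim is that if $\cP_i$ is not $(\epsilon/2)$-regular, then $q(\cP_{i+1}) \ge q(\cP_i) + c\epsilon^5$ for some absolute $c>0$. To see this, each of the $\ge (\epsilon/2) k_i^2$ irregular pairs $(I,J)$ yields an interval witness $A \subseteq I$, $B \subseteq J$ with $|A| \ge (\epsilon/2)|I|$, $|B| \ge (\epsilon/2)|J|$, and $|d_Y(A,B) - d_Y(I,J)| > \epsilon/2$. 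Because each refined cell has length at most $(\epsilon/4)|I|$, the intervals $A,B$ are well approximated by unions of refined cells, and a Cauchy--Schwarz / variance computation shows that the refined densities inside $I \times J$ contribute an extra $\Omega(\epsilon^4)\cdot|I||J|/n^2$ to the energy. Summing over the $(\epsilon/2)k_i^2$ bad pairs gives an $\Omega(\epsilon^5)$ total increment. Since $q \in [0,1]$, after at most $L = O(\epsilon^{-5})$ iterations some $\cP_L$ must be $(\epsilon/2)$-regular, and has $k := k_L \le m\, t^L = m\,\epsilon^{-O(\epsilon^{-5})}$ intervals.

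To upgrade to ``every equipartition'', let $\cP'$ be any other equipartition of $[n]$ into $k$ intervals. Since both partitions have the same multiset of sizes $\{\lfloor n/k\rfloor, \lceil n/k\rceil\}$, the partial sums of sizes differ by at most $k/2$, so corresponding intervals satisfy $|I_j \Delta I'_j| \le k$; as $|I_j| \ge \lfloor n/k \rfloor$, the relative symmetric difference is $O(k^2/n)$. Provided $n \ge C k^2/\epsilon^2$, two applications of Lemma~\ref{prevlem2} show that any interval witness $A' \subseteq I'_i$, $B' \subseteq I'_j$ of failure of $\epsilon$-regularity for $\cP'$ yields, via $A := A' \cap I_i$ and $B := B' \cap I_j$ (each an intersection of two intervals, hence again an interval), corresponding witnesses for failure of $(\epsilon/2)$-regularity for $\cP_L$, contradicting the previous step. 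Hence every equipartition of $[n]$ into $k$ intervals is $\epsilon$-regular.

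For the values of $n$ too small for the perturbation step (namely $n < C k_L^2/\epsilon^2$, which is still $m^{O(1)}\epsilon^{-O(\epsilon^{-5})}$), I instead take $k := n$ if $n \ge m$ or $k := m$ if $n < m$: the equipartition consists of singletons (plus possibly empty intervals) and is trivially $\epsilon$-regular. Setting $M := C k_L^2/\epsilon^2$ lets the two regimes jointly cover every $n \in \NN$, while maintaining $M = m^{O(1)}\epsilon^{-O(\epsilon^{-5})}$. The main obstacle is establishing the $\Omega(\epsilon^5)$ energy-increment lemma using only interval witnesses (rather than arbitrary subset witnesses as in standard Szemer\'edi-style arguments); once that variance estimate is in hand, everything else is bookkeeping on symmetric differences.
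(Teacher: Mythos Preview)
Your overall strategy matches the paper's: an energy-increment argument with refinement by a fixed factor, then a perturbation step upgrading one $(\epsilon/2)$-regular equipartition into $k$ parts to all equipartitions into $k$ parts, with a separate treatment for small $n$. (The paper carries out the increment on $[0,1]$ via Theorem~\ref{thm:int-reg-function} and then transfers to $[n]$, but that is cosmetic.)

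There is, however, a real gap: the refinement factor $t=\lceil 4/\epsilon\rceil$ is too small for the energy-increment step. With cells of width $(\epsilon/4)|I|$, a witness interval $A$ of length $(\epsilon/2)|I|$ is only two cell-widths long, so rounding it to a union of refined cells can change the density by $\Theta(1)$ rather than $o(\epsilon)$, and your ``Cauchy--Schwarz/variance computation'' does not go through. Concretely, take $m=1$, $I=J=[0,1]$, and let the matrix (or function) be periodic in the first coordinate with period $\epsilon/4$, equal to $1$ on the left half of each period and $0$ on the right. Every $\cP_1$-cell then has density exactly $1/2=d(I,J)$, so $q(\cP_1)=q(\cP_0)$; yet $\cP_0$ is $(\epsilon/2)$-irregular for $\epsilon<1/5$, since $A=[0,5\epsilon/8]$, $B=[0,1]$ give $|d(A,B)-d(I,J)|=|3/5-1/2|=1/10>\epsilon/2$. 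Thus your claimed one-step increment $q(\cP_{i+1})\ge q(\cP_i)+c\epsilon^5$ is false as stated. The fix is to take the refinement factor of order $\epsilon^{-3}$, exactly as the paper does with $q=\lceil 16\epsilon^{-3}\rceil$: then rounding $A$ to a grid-aligned $A'$ gives $|A\Delta A'|\le 2|I|/t=O(\epsilon^3|I|)$, so by Lemma~\ref{lem:density-diff} the density changes by at most $\epsilon/4$ upon rounding, and the variance step yields the $\Omega(\epsilon^4)|I||J|/n^2$ per-pair increment you want. Since $(\epsilon^{-3})^{O(\epsilon^{-5})}=\epsilon^{-O(\epsilon^{-5})}$, this correction does not affect the final bound on $M$, and the remainder of your argument (perturbation to handle all equipartitions, and the small-$n$ case) goes through essentially unchanged.
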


\begin{remark}
If $n \le M$, then we can take the partition of $[n]$ into singletons. Otherwise, our proof will show that one can pick $k$ from a small set of choices: one can take $k = mq^i$, where $q = \lceil 16\epsilon^{-3} \rceil$ and $0 \le i < \lceil 4\epsilon^{-5} \rceil$ is some integer.
\end{remark}

Theorem~\ref{thm:int-reg-matrix} has the following immediate
consequence for permutation regularity. Given a permutation $\sigma \colon [n] \to [n]$, associate to it the
$n\times n$
matrix $Y^{\sigma}$ defined by
\[
y_{ij} = \begin{cases} 1 & \text{if } \sigma(i) < j \\
  0 & \text{otherwise}.
\end{cases}
\]
A partition of $[n]$ into
  intervals is said
  to be \emph{$\epsilon$-regular} for $\sigma$ if it is interval $\epsilon$-regular for
  the associated matrix $Y^\sigma$.

\begin{theorem}[Permutation regularity lemma]
  \label{thm:perm-reg-lemma}
  For every $\epsilon > 0$ and positive integer $m$, there exist
  $M = m^{O(1)} \epsilon^{-O(\epsilon^{-5})}$ with the following property. Let $n \neq n_0$ and $\sigma$
  be a permutation of $[n]$. Then for some integer $k \in [m, M]$,
  every equitable partition of $[n]$ into $k$ intervals is
  $\epsilon$-regular for $\sigma$.
  \qed
\end{theorem}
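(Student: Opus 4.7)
The plan is to derive Theorem~\ref{thm:perm-reg-lemma} as an immediate consequence of the interval regularity lemma (Theorem~\ref{thm:int-reg-matrix}) applied to the associated matrix $Y^\sigma$. Given a permutation $\sigma$ of $[n]$, I would simply form the $n \times n$ matrix $Y^\sigma$ defined in the excerpt, whose entries lie in $\{0,1\} \subseteq [0,1]$, so it satisfies the hypothesis of Theorem~\ref{thm:int-reg-matrix}. Applying that theorem with parameters $\epsilon$ and $m$ produces some threshold $M = m^{O(1)} \epsilon^{-O(\epsilon^{-5})}$ and some integer $k \in [m, M]$ such that every equipartition of $[n]$ into $k$ intervals is interval $\epsilon$-regular for $Y^\sigma$.

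The conclusion then follows by unraveling the definitions: a partition of $[n]$ into intervals is declared $\epsilon$-regular for $\sigma$ precisely when it is interval $\epsilon$-regular for $Y^\sigma$, and an equipartition in the sense of Theorem~\ref{thm:int-reg-matrix} is exactly an equitable partition in the sense of Theorem~\ref{thm:perm-reg-lemma} (every two parts differ in size by at most one). Hence every equitable partition of $[n]$ into $k$ intervals is $\epsilon$-regular for $\sigma$, and the bound $M$ is inherited verbatim from Theorem~\ref{thm:int-reg-matrix}. The restriction on $n$ (the statement's ``$n \neq n_0$'' is most naturally read as ``$n \ge n_0$'' for a threshold depending on $m$ and $\epsilon$) serves only to guarantee that an equipartition of $[n]$ into $k$ intervals is meaningful, which requires $n \ge k$; this is likewise inherited from the same application of Theorem~\ref{thm:int-reg-matrix}.

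There is essentially no obstacle in this derivation --- all the substantive work lies in proving the interval regularity lemma for matrices, after which the specialization to permutations is just bookkeeping. The only conceptual point worth flagging is that the matrix encoding $Y^\sigma$ is precisely tailored so that density of the $0/1$ entries on a rectangle $I \times J$ records the fraction of indices $i \in I$ with $\sigma(i)$ lying below a given threshold in $J$, which is exactly the statistic one wants to control in a regularity lemma for permutations; verifying that this is the right encoding is the entirety of the proof.
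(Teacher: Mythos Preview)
Your proposal is correct and matches the paper's approach exactly: the paper states Theorem~\ref{thm:perm-reg-lemma} with a \qed and introduces it as ``the following immediate consequence'' of Theorem~\ref{thm:int-reg-matrix} applied to the associated matrix $Y^\sigma$, which is precisely what you do. Your reading of ``$n \neq n_0$'' as a threshold condition is also reasonable, and your observation that all the content lies in Theorem~\ref{thm:int-reg-matrix} is exactly the paper's stance.
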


An early form of this permutation regularity lemma was first proved by
Cooper~\cite{Coo06}. The above form was proved in \cite{HKS12} with $M$ being a
tower exponential of height $O(\epsilon^{-5})$. Our version requires a much smaller $M$.

\subsection{Interval regular partitions for functions}

We first prove the interval regularity lemma for functions. It is
somewhat cleaner to work with partitions of the real interval $[0,1]$ into
exactly equal-length subintervals, instead of equitable partitions of $[n]$. The measure theoretic approach has the slight advantage that it allows us to defer divisibility issues of $n$ until the end.

Let $f \colon [0,1]^2 \to [0,1]$ be a measurable function. For any
intervals $I, J \subseteq[0,1]$ we write
\[
d_f(I, J) := \frac{1}{\lambda(I)\lambda(J)} \int_{I \times J} f(x,y)
\, dxdy.
\]
Here $\lambda$ denotes the Lebesgue measure.

\begin{definition}
  Let $f \colon [0,1]^2 \to [0,1]$ be a measurable function. Let $I, J
  \subseteq [0,1]$ be intervals. We say that $(I, J)$ is
  \emph{interval $\epsilon$-regular} for $f$ if for all subintervals $A
  \subseteq I$ and $B \subseteq J$ with $\lambda(A) \ge \epsilon
  \lambda(I)$ and $\lambda(B) \ge \epsilon \lambda(J)$ one has
  \[
  |d_f(A, B) - d_f(I, J)| \le \epsilon.
  \]

  Let $\cP$ a partition of $[0,1]$ into $k$ intervals. We say that
  $\cP$ is \emph{interval $\epsilon$-regular} for $f$ if all except at most $\epsilon k^2$
  pairs of intervals $(I, J)$ of $\cP$ are interval
  $\epsilon$-regular for $f$.
\end{definition}

\begin{theorem} \label{thm:int-reg-function} For every $\epsilon > 0$
  and positive integer $m$ there is some $M =
  m\epsilon^{-O(\epsilon^{-5})}$ with the following property. For
  every measurable function $f \colon [0,1]^2 \to [0,1]$, there is
  some integer $k \in [m, M]$ such that the partition of $[0,1]$
  into $k$ equal-length intervals  $[0, 1/k) \cup [1/k, 2/k) \cup \dots \cup [(k-1)/k,1]$ is interval $\epsilon$-regular for $f$.
\end{theorem}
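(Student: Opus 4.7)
\smallskip

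\textbf{Proof plan.} The natural approach is an iterated refinement / mean-square-density increment argument, mirroring Szemer\'edi's strategy but carried out through a rigid chain of equipartitions. Set $q = \lceil 32 \epsilon^{-3} \rceil$ and, for $i = 0, 1, 2, \dots$, let $\cP_i$ denote the partition of $[0,1]$ into $m q^i$ equal intervals. For any partition $\cP$ of $[0,1]$ into intervals, let $f_\cP$ denote the stepping function taking value $d_f(I,J)$ on $I \times J$ for $I, J \in \cP$, and define the mean-square density $q(\cP) := \|f_\cP\|_2^2$. Since $\cP_{i+1}$ refines $\cP_i$, and $f_{\cP_i}$ is the orthogonal projection of $f$ onto the space of rectangle-stepping functions associated to $\cP_i$, one has the Pythagorean identity
\[
q(\cP_{i+1}) = q(\cP_i) + \|f_{\cP_{i+1}} - f_{\cP_i}\|_2^2.
\]
The strategy is to show that if $\cP_i$ fails to be interval $\epsilon$-regular, then the Pythagorean defect is at least $\epsilon^5/16$, so the process must terminate after $T = \lceil 16 \epsilon^{-5}\rceil$ steps, yielding $M = m q^T = m \epsilon^{-O(\epsilon^{-5})}$.

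The core of the argument is a defect inequality localized to a single irregular pair. Suppose $(I, J) \in \cP_i \times \cP_i$ is not interval $\epsilon$-regular, witnessed by subintervals $A \subseteq I$ and $B \subseteq J$ with $\lambda(A) \ge \epsilon \lambda(I)$, $\lambda(B) \ge \epsilon \lambda(J)$, and $|d_f(A,B) - d_f(I,J)| \ge \epsilon$. The subintervals $A$ and $B$ need not align with $\cP_{i+1}$, but since $\cP_{i+1}$ refines each interval of $\cP_i$ into $q$ equal pieces, we can let $A' \subseteq A$ be the largest union of $\cP_{i+1}$-parts contained in $A$; because the parts of $\cP_{i+1}$ inside $I$ are consecutive, $A'$ is itself a subinterval with $\lambda(A \setminus A') \le 2\lambda(I)/q$. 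Define $B' \subseteq B$ analogously. With $q \ge 32 \epsilon^{-3}$, a direct computation gives $\lambda(A') \ge (\epsilon/2) \lambda(I)$, $\lambda(B') \ge (\epsilon/2) \lambda(J)$, and $|d_f(A',B') - d_f(I,J)| \ge \epsilon/2$.

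Now the standard defect-form inequality applied to the $2\times 2$ coarsening $\{A', I\setminus A'\} \times \{B', J\setminus B'\}$ of $\cP_{i+1}$ restricted to $I \times J$ yields
\[
\int_{I \times J} (f_{\cP_{i+1}} - f_{\cP_i})^2
\ge \frac{\alpha\beta}{1-\alpha\beta}(d_f(A',B') - d_f(I,J))^2 \lambda(I)\lambda(J)
\ge \frac{\epsilon^4}{16}\, \lambda(I)\lambda(J),
\]
where $\alpha = \lambda(A')/\lambda(I)$ and $\beta = \lambda(B')/\lambda(J)$ (this is the usual variance computation for the split into a rectangle and its complement). If $\cP_i$ is not interval $\epsilon$-regular, there are at least $\epsilon (mq^i)^2$ such irregular pairs, each of product measure $(mq^i)^{-2}$, so summing gives
\[
q(\cP_{i+1}) - q(\cP_i) \ge \epsilon (mq^i)^2 \cdot \tfrac{\epsilon^4}{16} \cdot (mq^i)^{-2} = \tfrac{\epsilon^5}{16}.
\]
Since $0 \le q(\cP_i) \le \|f\|_\infty^2 \le 1$ for all $i$, there must be some $i \in \{0, 1, \dots, T\}$ for which $\cP_i$ is interval $\epsilon$-regular, giving the desired $k = m q^i \in [m, M]$ with $M = m q^T = m \epsilon^{-O(\epsilon^{-5})}$.

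\textbf{Main obstacle.} The non-routine ingredient is the alignment step: arbitrary subintervals $A \subseteq I$ witnessing irregularity do not come prealigned with the uniform $q$-refinement, so one must verify that trimming $A$ and $B$ to the largest aligned sub-subintervals preserves both a non-negligible fraction of their measure and essentially the full density deviation. This is what forces the choice $q = \Theta(\epsilon^{-3})$ and thereby the precise $\epsilon^{-O(\epsilon^{-5})}$ growth of $M$; the rest of the argument is a routine $L^2$-increment bookkeeping.
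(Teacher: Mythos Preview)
Your argument is correct and follows essentially the same $L^2$-energy increment scheme as the paper: one walks along the chain of equipartitions $\cP_i$ into $mq^i$ intervals with $q = \Theta(\epsilon^{-3})$, rounds the witnessing subintervals $A,B$ to $\cP_{i+1}$-aligned intervals, and extracts an $\Omega(\epsilon^5)$ increment in $\|f_{\cP_{i+1}}\|_2^2 - \|f_{\cP_i}\|_2^2$ via Pythagoras. The only differences from the paper's proof are cosmetic---you round $A$ inward to $A' \subseteq A$ whereas the paper rounds outward to the smallest aligned superinterval, and you invoke the $\alpha\beta/(1-\alpha\beta)$ defect form rather than the paper's direct Cauchy--Schwarz bound on the average of $f_{\cP_i}-f_{\cP_{i+1}}$ over $A'\times B'$; both yield the same $\epsilon^{-O(\epsilon^{-5})}$ bound with slightly different constants.
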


\begin{remark}
In Theorem~\ref{thm:int-reg-function}, it is possible to take $k = mq^i$, where $q = \lceil 16\epsilon^{-3} \rceil$ and $0 \le i < \lceil 4\epsilon^{-5} \rceil$ is some integer.
\end{remark}

Before proving Theorem~\ref{thm:int-reg-function}, we first prove a
lemma showing that the density $d_f(A, B)$ does not change very much
if $A$ and $B$ are changed only slightly. 

\begin{lemma} \label{lem:density-diff}
  Let $f \colon [0,1]^2 \to [0,1]$ be a measurable function. For any
  intervals $A,
  A', B, B' \subseteq [0,1]$ we have
  \[
  |d_f(A, B) - d_f(A', B')| \leq \frac{2 \lambda((A \times B) \Delta
    (A' \times B'))}{\lambda(A)\lambda(B)}.
  \]
\end{lemma}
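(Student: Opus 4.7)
The plan is to work with the rectangles $R = A \times B$ and $R' = A' \times B'$ in $[0,1]^2$ (with the product Lebesgue measure $\mu = \lambda \otimes \lambda$) and to reduce the inequality to two elementary estimates: one on the difference of integrals, the other on the difference of areas. Writing $I_S := \int_S f\, d\mu$ for a measurable $S$, one has $d_f(A,B) = I_R/\mu(R)$ and $d_f(A',B') = I_{R'}/\mu(R')$, and since $f$ is $[0,1]$-valued we get the two key bounds
\[
|I_R - I_{R'}| \;\le\; \mu(R \Delta R') \qquad \text{and} \qquad |\mu(R) - \mu(R')| \;\le\; \mu(R \Delta R'),
\]
the first because the integrand is bounded by $1$ in absolute value, the second by $\mu(R) = \mu(R \cap R') + \mu(R \setminus R')$ and similarly for $R'$.

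Next, I would combine the two densities over a common denominator and use a telescoping trick so that the numerator is controlled by the two quantities above with $\mu(R) = \lambda(A)\lambda(B)$ ending up in the denominator. Concretely, write
\[
\frac{I_R}{\mu(R)} - \frac{I_{R'}}{\mu(R')} \;=\; \frac{I_R - I_{R'}}{\mu(R)} \;+\; \frac{I_{R'}\bigl(\mu(R') - \mu(R)\bigr)}{\mu(R)\,\mu(R')}.
\]
Applying the triangle inequality and using $I_{R'} \le \mu(R')$ together with the two bounds above gives
\[
|d_f(A,B) - d_f(A',B')| \;\le\; \frac{\mu(R \Delta R')}{\mu(R)} + \frac{\mu(R \Delta R')}{\mu(R)} \;=\; \frac{2\mu(R \Delta R')}{\lambda(A)\lambda(B)},
\]
which is the required inequality.

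There is no serious obstacle here: the one choice that matters is which of $\mu(R)$, $\mu(R')$ to keep in the denominator when telescoping, since the stated bound is not symmetric in $(A,B)$ and $(A',B')$. Picking the telescoping so that $\mu(R)$ survives (as above) yields exactly the asymmetric form in the statement. A degenerate case where $\lambda(A)\lambda(B) = 0$ is vacuous since then the right-hand side is infinite (or the left-hand side is undefined and the statement should be read with the usual convention); otherwise the manipulation is well-defined.
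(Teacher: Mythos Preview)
Your proof is correct and is essentially the same as the paper's. The paper multiplies through by $\lambda(A)\lambda(B)$ first and then telescopes $\mu(R) d_f(A,B) - \mu(R) d_f(A',B') = (I_R - I_{R'}) + d_f(A',B')(\mu(R') - \mu(R))$, whereas you keep everything in fractional form; but if you clear denominators in your identity you recover exactly the paper's decomposition, and both finish with the same two bounds $|I_R - I_{R'}| \le \mu(R\Delta R')$ and $|\mu(R) - \mu(R')| \le \mu(R\Delta R')$ together with $d_f(A',B') \le 1$ (equivalently $I_{R'} \le \mu(R')$).
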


\begin{proof}
By the triangle inequality,
\begin{align*}
&\lambda(A)\lambda(B) |d_f(A,B) - d_f(A',B')|
\\
&\le
\left| \lambda(A) \lambda(B) d_f(A, B) - \lambda(A') \lambda(B') d_f(A',
  B') \right| + d_f(A',B') |\lambda(A)\lambda(B) -
\lambda(A')\lambda(B')|
\\&\le \left|\int_{A \times B} f\,d\lambda - \int_{A' \times B'}
  f\,d\lambda\right| +   |\lambda(A)\lambda(B) -
\lambda(A')\lambda(B')|
\\&\le 2\lambda((A \times B) \Delta (A' \times B')).
\end{align*}
\end{proof}

The bound in Lemma \ref{lem:density-diff} can be improved by a factor $2$ by following the proof of Lemma \ref{prevlem2}. 

\begin{proof}[Proof of Theorem~\ref{thm:int-reg-function}]
  Let $f_k$ denote the function obtained from $f$ by replacing its
  value inside each box $[i/k, (i+1)/k) \times [j/k, (j+1)/k)$ by its
  average inside that box, i.e.,
  \[
  f_k(x,y) := k^2 \int_{\left[ \frac{i}{k}, \frac{i+1}{k} \right) \times \left[ \frac{j}{k}, \frac{j+1}{k} \right)} f \,d\lambda \qquad \text{if
  } (x,y) \in \left[ \frac{i}{k}, \frac{i+1}{k} \right) \times \left[ \frac{j}{k}, \frac{j+1}{k} \right)
  \]
  for $i, j = 0,1, \dots, k-1$ (when $i$ or $j$ equals $k-1$, the
  corresponding interval should be modified to be closed on the
  right). Write
  \[
  \|f\|_2 := \left(\int_{[0,1]^2} |f|^2 \,d\lambda\right)^{1/2}
  \]
  for the $L^2$ norm.
  
  Let $q = \lceil 16 \epsilon^{-3} \rceil$. Consider the sequence 
  $
  f_m, f_{mq}, f_{mq^2}, \dots$.
  Since $0 \leq \|f_k\|_2 \leq 1$
  for all $k$, there exists some $k = mq^i$ for $0 \le i < \ceil{4\epsilon^{-5}}$ such that
  \begin{equation} \label{eq:f_kq-f_k}
    \|f_{kq}\|_2^2 \le \|f_k\|_2^2 + \frac{\epsilon^5}{4}.
  \end{equation}
  We will show that the partition of $[0,1]$ into $k$ equal-length intervals  
  is interval $\epsilon$-regular. Indeed, if this were not the case, then there
  exists more than $\epsilon k^2$ irregular pairs of intervals $(I,
  J)$, where $I = [i/k, (i+1)/k)$ and $J = [j/k, (j+1)/k)$ for some
  integers $i$ and $j$. Due
  to the irregularity, there exist subintervals $A \subseteq I$ and $B
  \subseteq J$ such that $\lambda(A) \ge \epsilon \lambda(I)$,
  $\lambda(B) \ge \epsilon \lambda(J)$, and
  \begin{equation} \label{eq:int-reg-func-discrep}
  |d_f(I, J) - d_f(A, B)| > \epsilon.
\end{equation}
Let $A'$ be the smallest interval containing $A$ with both ends
  being multiples of $1/(kq)$. Note that $A' \subseteq I$. Similarly
  define $B'$. We see that $A' \times B'$ contains $A \times B$, and
  the difference in area is at most $4/(k^2q)$. By
  Lemma~\ref{lem:density-diff},
  \[
  |d_f(A, B) - d_f(A', B')| \leq \frac{2 (4/(k^2q))}{(\epsilon/k)^2} =
  \frac{8}{q\epsilon^2} = \frac{8}{\lceil 16 \epsilon^{-3}\rceil \epsilon^2} \le \frac{\epsilon}{2}.
  \]
By \eqref{eq:int-reg-func-discrep} we have
\[
|d_f(I, J) - d_f(A', B')| > \frac{\epsilon}{2}.
\]
Since the endpoints of $I$ and $J$ are multiples of $1/k$ and those of $A'$
and $B'$ are multiples of $1/(kq)$, the function $f_k - f_{kq}$ has average value $d_f(I, J) - d_f(A',
B')$ over the box $A' \times B'$. So the contribution to $\|f_k - f_{kq}\|_2^2$ from $A' \times
B'$ is at least $\lambda(A') \lambda(B') (\epsilon/2)^2 \geq
\epsilon^4/(4k^2)$. As there are more than $\epsilon k^2$ irregular
pairs $(I, J)$, and all the rectangles  $I \times J$ are disjoint, we have
\[
\|f_k - f_{kq}\|_2^2 > \frac{\epsilon^5}{4}.
\]
Note that
\[
\int_{[0,1]^2} (f_k - f_{kq}) f_k  \, d\lambda = 0
\]
since $f_k$ is constant over each box $[i/k, (i+1)/k) \times [j/k,
(j+1)/k)$, and $f_{kq}$ averages to $f_k$ on this box. Thus $f_k$ and
$f_k - f_{kq}$ are orthogonal, so by the Pythagorean theorem,
\[
\|f_{kq}\|_2^2 = \|f_{k} - (f_k - f_{kq})\|_2^2 = \|f_k\|_2^2 + \|f_k
- f_{kq}\|_2^2 > \|f_k\|_2^2 + \frac{\epsilon^5}{4},
\]
which contradicts \eqref{eq:f_kq-f_k}. It follows that the partition
of $[0,1]$ into $k$ equal-length intervals is interval $\epsilon$-regular for $f$.
\end{proof}

\subsection{Dealing with equitable partitions}

Here is a lemma that will be useful for the proof of
Theorem~\ref{thm:int-reg-matrix}. It says that $(I, J)$ being interval
regular is robust under changing $I$ and $J$ by a small amount.

\begin{lemma} \label{thm:int-eps-reg-nudge}
  Let $f \colon [0,1]^2 \to [0,1]$ be a measurable function. Let $I,
  I', J, J' \subseteq [0,1]$. Let $0 < \epsilon \leq 1$. Let $\epsilon' > 0$ be
  a quantity less than each of
  \[
  \epsilon - \frac{4 \lambda( (I \times J)\Delta (I' \times
    J'))}{\epsilon^2 \lambda(I)\lambda(J)}, \quad \frac{\lambda(I)
    \epsilon - \lambda(I \setminus I')}{\lambda(I')}, \quad \frac{\lambda(J)
    \epsilon - \lambda(J \setminus J')}{\lambda(J')}.
  \]
  If $(I', J')$ is interval $\epsilon'$-regular for $f$, then $(I, J)$
  is interval $\epsilon$-regular for $f$.
\end{lemma}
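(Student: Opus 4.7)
The plan is to verify the interval $\epsilon$-regularity of $(I,J)$ directly from the definition by transferring test subintervals from $I,J$ over to $I',J'$, applying the hypothesized $\epsilon'$-regularity there, and bounding the discrepancy with two applications of Lemma~\ref{lem:density-diff}. Specifically, given subintervals $A \subseteq I$ and $B \subseteq J$ with $\lambda(A) \ge \epsilon \lambda(I)$ and $\lambda(B) \ge \epsilon \lambda(J)$, I would consider $A' = A \cap I'$ and $B' = B \cap J'$. Intersections of intervals are intervals, so $A' \subseteq I'$ and $B' \subseteq J'$ are admissible subintervals.

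To apply $\epsilon'$-regularity to $(A', B')$, one needs $\lambda(A') \ge \epsilon' \lambda(I')$ and analogously for $B'$. Since $A \subseteq I$, we have $A \setminus I' \subseteq I \setminus I'$, so
\[
\lambda(A') \;\ge\; \lambda(A) - \lambda(I \setminus I') \;\ge\; \epsilon \lambda(I) - \lambda(I \setminus I'),
\]
and the second hypothesis on $\epsilon'$ guarantees this is at least $\epsilon' \lambda(I')$; the $J$-side is identical. Hence $|d_f(A', B') - d_f(I', J')| \le \epsilon'$.

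The key observation for the error control is that $A' \times B' = (A \times B) \cap (I' \times J')$, so, since $A \times B \subseteq I \times J$,
\[
(A \times B) \Delta (A' \times B') \;=\; (A \times B) \setminus (I' \times J') \;\subseteq\; (I \times J) \Delta (I' \times J').
\]
This is what allows the single symmetric-difference quantity in the first hypothesis to simultaneously dominate both the $(A,B) \leftrightarrow (A',B')$ and the $(I,J) \leftrightarrow (I',J')$ comparisons. Applying Lemma~\ref{lem:density-diff} to each pair, and using $\lambda(A)\lambda(B) \ge \epsilon^2 \lambda(I)\lambda(J)$ together with $\epsilon \le 1$, both $|d_f(A,B) - d_f(A',B')|$ and $|d_f(I,J) - d_f(I',J')|$ are bounded by $2\lambda((I \times J) \Delta (I' \times J'))/(\epsilon^2 \lambda(I)\lambda(J))$.

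Combining these three estimates by the triangle inequality yields
\[
|d_f(A,B) - d_f(I,J)| \;\le\; \frac{4\,\lambda((I \times J) \Delta (I' \times J'))}{\epsilon^2 \lambda(I)\lambda(J)} + \epsilon',
\]
which is at most $\epsilon$ by the first hypothesis on $\epsilon'$, completing the verification. There is no real obstacle here; the only point requiring a touch of care is choosing $A' = A \cap I'$, $B' = B \cap J'$ (rather than, say, some interval containing $A$) so that the containment in the preceding display holds and a single symmetric-difference bound suffices.
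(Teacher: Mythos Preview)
Your proposal is correct and follows essentially the same approach as the paper: define $A'=A\cap I'$, $B'=B\cap J'$, use the second and third hypotheses to check the size conditions for $\epsilon'$-regularity, apply Lemma~\ref{lem:density-diff} twice, and finish with the triangle inequality and the first hypothesis. Your write-up is in fact slightly more explicit than the paper's in justifying the containment $(A\times B)\Delta(A'\times B')\subseteq (I\times J)\Delta(I'\times J')$.
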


\begin{proof}
  Let $A \subseteq I$ and $B \subseteq J$ be subintervals such that
  $\lambda(A) \ge \epsilon \lambda(I)$ and $\lambda(B) \ge \epsilon
  \lambda(J)$. Let $A' = A \cap I'$ and $B' = B \cap J'$. The second
  and third hypotheses about $\epsilon'$ above imply that $\lambda(A')
  \ge \epsilon' \lambda(I')$ and $\lambda(B')
  \ge \epsilon' \lambda(J')$. Since $(I', J')$ is $\epsilon'$-regular
  for $f$, we have
  \[
  |d_f(A', B') - d_f(I', J')| \leq \epsilon'.
  \]
  By Lemma~\ref{lem:density-diff}, we have
  \[
  |d_f(I, J) - d_f(I', J')| \le \frac{2 \lambda( (I \times J)\Delta (I' \times
    J'))}{\lambda(I)\lambda(J)}
  \]
  and
  \[
  |d_f(A, B) - d_f(A', B')| \le \frac{2 \lambda( (A \times B)\Delta (A' \times
    B'))}{\lambda(A)\lambda(B)}
  \le \frac{2 \lambda( (I \times J)\Delta (I' \times
    J'))}{\epsilon^2 \lambda(I)\lambda(J)}.
  \]
  It follows by the triangle inequality and the first hypotheses on
  $\epsilon'$ that
  \[
  |d_f(A, B) - d_f(I, J)| \leq \epsilon,
  \]
  which proves that $(I,J)$ is $\epsilon$ regular for $f$.
\end{proof}

\begin{proof}[Proof of Theorem~\ref{thm:int-reg-matrix}]
  Let $f \colon [0,1]^2 \to [0,1]$ be the function that takes constant value $y_{ij}$ on the rectangle $[(i-1)/n,
  i/n) \times [(j-1)/n, j/n)$, for each $1 \le i, j \le n$. By
  Theorem~\ref{thm:int-reg-function}, there is some $k \in [m, m\epsilon^{-O(\epsilon^{-5})}]$ so that the
  partition of $[0,1]$ into $k$ equal-length intervals
  interval $(\epsilon/2)$-regular.
  
  Any equitable partition $\cP$ of $[n]$ into sets of sizes $c_1, \dots, c_k$, gives rise to a partition $\cQ$ of $[0,1]$ into intervals of length $c_1/n, \dots, c_k/n$.   Since $\cP$ is an equitable partition, the $i$-th interval $I_i$ of
  $\cQ$ differs, in terms of symmetric
  difference, from $[(i-1)/k, i/k)$  by at most $k/n$ in measure. It follows from
  Lemma~\ref{thm:int-eps-reg-nudge} that if $n$ is large enough, say, $n \ge 100k^3 \epsilon^{-3}$, then $(I_i, J_i)$ is interval
  $\epsilon$-regular for $f$ whenever $[(i-1)/k, i/k) \times [(j-1)/k,
  j/k)$ is interval $(\epsilon/2)$-regular for $f$. It follows that $\cQ$ is interval $\epsilon$-regular for $f$. 
  
  When $n < 100k^3\epsilon^{-3}$, we can take the partition of $[n]$ into singletons, which is trivially interval $\epsilon$-regular.
\end{proof}


\begin{thebibliography}{10}

\bibitem{ADLRY}
N.~Alon, R.~A. Duke, H.~Lefmann, V.~R{{\"o}}dl, and R.~Yuster, \emph{The
  algorithmic aspects of the regularity lemma}, J. Algorithms \textbf{16}
  (1994), 80--109.

\bibitem{AN06}
N.~Alon and A.~Naor, \emph{Approximating the cut-norm via {G}rothendieck's
  inequality}, SIAM J. Comput. \textbf{35} (2006), 787--803 (electronic).

\bibitem{AlSp}
N.~Alon and J.~H. Spencer, \emph{The probabilistic method}, third ed., John
  Wiley \& Sons, Inc., Hoboken, NJ, 2008.

\bibitem{BCLSV08}
C.~Borgs, J.~T. Chayes, L.~Lov{{\'a}}sz, V.~T. S{{\'o}}s, and K.~Vesztergombi,
  \emph{Convergent sequences of dense graphs. {I}. {S}ubgraph frequencies,
  metric properties and testing}, Adv. Math. \textbf{219} (2008), 1801--1851.

\bibitem{CF12}
D.~Conlon and J.~Fox, \emph{Bounds for graph regularity and removal lemmas},
  Geom. Funct. Anal. \textbf{22} (2012), 1191--1256.

\bibitem{Coo06}
J.~N. Cooper, \emph{A permutation regularity lemma}, Electron. J. Combin.
  \textbf{13} (2006), Research Paper 22, 20 pp. (electronic).

\bibitem{CW90}
D.~Coppersmith and S.~Winograd, \emph{Matrix multiplication via arithmetic
  progressions}, J. Symbolic Comput. \textbf{9} (1990), 251--280.

\bibitem{DKMRS12}
D.~Dellamonica, S.~Kalyanasundaram, D.~Martin, V.~R{\"o}dl, and A.~Shapira,
  \emph{A deterministic algorithm for the {F}rieze-{K}annan regularity lemma},
  SIAM J. Discrete Math. \textbf{26} (2012), 15--29.

\bibitem{DKMRS15}
D.~Dellamonica, Jr., S.~Kalyanasundaram, D.~M. Martin, V.~R{{\"o}}dl, and
  A.~Shapira, \emph{An optimal algorithm for finding {F}rieze-{K}annan regular
  partitions}, Combin. Probab. Comput. \textbf{24} (2015), 407--437.

\bibitem{DLR}
R.~A. Duke, H.~Lefmann, and V.~R\"odl, \emph{A fast approximation algorithm for
  computing the frequencies of subgraphs in a given graph}, SIAM Journal on
  Computing \textbf{24} (1995), 598--620.

\bibitem{EMS10}
E.~Fischer, A.~Matsliah, and A.~Shapira, \emph{Approximate hypergraph
  partitioning and applications}, SIAM Journal on Computing \textbf{39} (2010),
  3155--3185.

\bibitem{FL}
J.~Fox and L.~M. Lov\'asz, \emph{A tight lower bound for {S}zemer\'edi's
  regularity lemma}, Combinatorica, to appear.

\bibitem{FK99a}
A.~Frieze and R.~Kannan, \emph{Quick approximation to matrices and
  applications}, Combinatorica \textbf{19} (1999), 175--220.

\bibitem{FK99}
A.~Frieze and R.~Kannan, \emph{A simple algorithm for constructing
  {S}zemer{\'e}di's regularity partition}, Electron. J. Combin. \textbf{6}
  (1999), Research Paper 17, 7 pp.

	\bibitem{FLZ17n}
J.~Fox, L.~M. Lov\'asz, and Y.~Zhao, \emph{A fast new algorithm for weak graph regularity}, preprint, arXiv:1801.05037.

\bibitem{Gow97}
W.~T. Gowers, \emph{Lower bounds of tower type for {S}zemer{\'e}di's uniformity
  lemma}, Geom. Funct. Anal. \textbf{7} (1997), 322--337.

\bibitem{Has}
J.~H{\aa}stad, \emph{Clique is hard to approximate within $n^{1-\epsilon}$},
  Acta Mathematica \textbf{182} (1999), 105--142.

\bibitem{HKS12}
C.~Hoppen, Y.~Kohayakawa, and R.~M. Sampaio, \emph{A note on permutation
  regularity}, Discrete Appl. Math. \textbf{160} (2012), 2716--2727.

\bibitem{KRT}
Y.~Kohayakawa, V.~R{{\"o}}dl, and L.~Thoma, \emph{An optimal algorithm for
  checking regularity}, SIAM J. Comput. \textbf{32} (2003), 1210--1235.

\bibitem{KoSi}
J.~Koml{\'o}s and M.~Simonovits, \emph{Szemer\'edi's regularity lemma and its
  applications in graph theory}, Combinatorics, {P}aul {E}rd{\H o}s is eighty,
  {V}ol.\ 2 ({K}eszthely, 1993), vol.~2, J\'anos Bolyai Math. Soc., Budapest,
  1996, pp.~295--352.

\bibitem{LeGall14}
F.~Le~Gall, \emph{Powers of tensors and fast matrix multiplication},
  Proceedings of the 39th International Symposium on Symbolic and Algebraic
  Computation (New York, NY, USA), ISSAC '14, ACM, 2014, pp.~296--303.

\bibitem{Lov}
L.~Lov{{\'a}}sz, \emph{Large networks and graph limits}, American Mathematical
  Society Colloquium Publications, vol.~60, American Mathematical Society,
  Providence, RI, 2012.

\bibitem{LS07}
L.~Lov{{\'a}}sz and B.~Szegedy, \emph{Szemer{\'e}di's lemma for the analyst},
  Geom. Funct. Anal. \textbf{17} (2007), 252--270.

\bibitem{MS15}
G.~Moshkovitz and A.~Shapira, \emph{A short proof of {G}owers' lower bound for
  the regularity lemma}, Combinatorica \textbf{36} (2016), 187-194.

\bibitem{Sz75}
E.~Szemer{{\'e}}di, \emph{On sets of integers containing no {$k$} elements in
  arithmetic progression}, Proceedings of the {I}nternational {C}ongress of
  {M}athematicians ({V}ancouver, {B}. {C}., 1974), {V}ol. 2, Canad. Math.
  Congress, Montreal, Que., 1975, pp.~503--505.

\bibitem{Sz76}
E.~Szemer{{\'e}}di, \emph{Regular partitions of graphs}, Probl{\`e}mes
  combinatoires et th{\'e}orie des graphes ({C}olloq. {I}nternat. {CNRS},
  {U}niv. {O}rsay, {O}rsay, 1976), Colloq. Internat. CNRS, vol. 260, CNRS,
  Paris, 1978, pp.~399--401.

\bibitem{Taoeps2}
T.~Tao, \emph{An epsilon of room, {II}}, American Mathematical Society,
  Providence, RI, 2010.

\bibitem{Zuck}
D.~Zuckerman, \emph{Linear degree extractors and the inapproximability of max
  clique and chromatic number}, Theory of computing \textbf{3} (2007),
  103--128.

\end{thebibliography}
\end{document}